\newtheorem{theorem}{Theorem}[section]
\newtheorem{lemma}[theorem]{Lemma}
\newtheorem{proposition}[theorem]{Proposition}
\newtheorem{corollary}[theorem]{Corollary}
\theoremstyle{definition}
\newtheorem{definition}[theorem]{Definition}
\newtheorem{remark}[theorem]{Remark}
\definecolor{A}{rgb}{.75,1,.75}
\numberwithin{equation}{section}
\newcommand{\del}{\delta}
\newcommand{\Del}{\Delta}
\newcommand{\W}{\mathcal{W}}
\newcommand{\ev}{\operatorname{ev}}
\newcommand{\zomn}{$0^M1^N$}
\newcommand{\so}{\mathfrak{s}}
\newcommand{\mfs}{\check{\mathfrak{s}}}
\newcommand{\sd}{\mathfrak{s}^\dagger}
\newcommand{\sr}{\mathfrak{s}^r}
\newcommand{\gl}{\mathfrak{gl}}
\newcommand{\gr}{\operatorname{gr}}
\newcommand{\pa}[1]{|{#1}|}
\newcommand{\ovl}[1]{\overline{#1}}
\newcommand{\Z}{ \mathbb Z }
\begin{document}
\title[Parabolic presentations of $Y(\gl_{M|N})$] {Parabolic presentations of the super Yangian $Y(\gl_{M|N})$ associated with arbitrary 01-sequences}

\author[Yung-Ning Peng]{Yung-Ning Peng}
\address{Department of Mathematics, National Central University,
Chung-Li, Taiwan, 32054} \email{ynp@math.ncu.edu.tw}

\begin{abstract}
Let $\mu$ be an arbitrary composition of $M+N$ and let $\so$ be an arbitrary \zomn-sequence. A new presentation, depending on $\mu$ and $\so$, of the super Yangian $Y_{M|N}$ associated to the general linear Lie superalgebra $\gl_{M|N}$ is obtained.
\end{abstract}

\maketitle

\setcounter{tocdepth}{1}
\tableofcontents

\section{Introduction}

The Yangians, defined by Drinfeld \cite{Dr1, Dr2}, are certain non-commutative Hopf algebras that are important examples of quantum groups. They were studied to generate rational solutions of the {\em Yang-Baxter equation} and there were many applications in statistical mechanics and mathematical physics. Nowadays, the study of Yangians gives many new points of view and important applications to classical Lie theory; see the book \cite{Mo} and references therein.

Consider $Y_N=Y(\gl_N)$, the Yangian associated to the general Lie algebra $\gl_N$.
Associated to each composition $\mu$ of $N$, Brundan and Kleshchev established a parabolic presentation for $Y_N$ in \cite{BK1}. Roughly speaking, this new presentation of $Y_N$ corresponds to the Levi decomposition of $\gl_N$ with respect to $\mu$. In the special case when $\mu=(1,\ldots,1)$, the corresponding presentation is equivalent to Drinfeld's presentation (\cite[Remark 5.12]{BK1}). On the other extreme case when $\mu=(N)$, the corresponding presentation is called the {\em RTT presentation}; see \cite{MNO, Mo}.

The parabolic presentations play a fundamental role in their subsequent works. In \cite{BK2}, they established a concrete realization of finite $W$-algebras associated to {\em any} nilpotent element of type A in terms of Yangians, and a key step is to define a subalgebra of $Y_N$, called the {\em shifted Yangian}. Such a subalgebra can only be defined in terms of the new presentation found in \cite{BK1} except some special cases. The connection between Yangians and finite $W$-algebras was observed earlier in \cite{RS} for some particular nilpotent elements (called {\em rectangular} elements) with a different approach. Moreover, by means of such a realization, one may study the representation theory of finite $W$-algebras by studying the representation theory of Yangians; see \cite{BK3}.

The main goal of this article is to obtain the generalization of \cite{BK1} to the super Yangian $Y_{M|N}=Y(\gl_{M|N})$, the super Yangian associated to the general linear Lie superalgebra $\gl_{M|N}$. It is defined by Nazarov \cite{Na} in terms of the RTT presentation as a super analogue of $Y_{N}$.

One of the major differences between $\gl_N$ and $\gl_{M|N}$ is that,  in the case of $\gl_N$, we may always choose a canonical Borel subalgebra, since all of the Borel subalgebras are conjugated by the action of the Lie group $GL_N$. This Borel subalgebra gives a root system, and for this given root system we have multiple choices of simple systems. It suffices to choose a canonical simple system since they are all conjugate by the Weyl group action. However, the above argument is no longer true in the case of $\gl_{M|N}$. Therefore, in the study of $\gl_{M|N}$ and its representation theory, we may want to specifically mention which simple system we are using, and the notion of {\em 01-sequence} (called $\epsilon\delta$-sequences in \cite[Section 1.3]{CW}) is introduced as a parameterizing set of the conjugacy classes of simple systems of $\gl_{M|N}$ (also for some other types of Lie superalgebras) under the Weyl group action. 

For example, if we identify $\gl_{M|N}$ with the set of $(M+N)\times (M+N)$ matrices and choose the Cartan subalgebra $\mathfrak{h}$ to be the set of diagonal matrices, then the most common choice for $\mathfrak{s}$ is 
$$\so=\so^{st}=\stackrel{M}{\overbrace{0\ldots0}}\,\stackrel{N}{\overbrace{1\ldots1}},$$
where a representative simple system of $\gl_{M|N}$ in the corresponding Weyl group orbit is given by
\[
\Pi^{st}=\{ \delta_{i}-\delta_{i+1}, \epsilon_{j}-\epsilon_{j+1}, \delta_{M}-\epsilon_{1}\,|\, 1\leq i\leq M-1, 1\leq j\leq N-1\}.
\]
Here the notation $\delta_i$ and $\epsilon_j$ denote elements in $\mathfrak{h}^*$ such that
$\delta_{i}(X)$ equals to the $i$-th diagonal entry of $X\in\mathfrak{h}$, and $\epsilon_{j}(X)$ equals to the $(M+j)$-th diagonal entry of $X\in\mathfrak{h}$. With this standard choice, there is only one odd simple root and the behavior of $\gl_{M|N}$ is ``closest" to the classical $\gl_N$.

Note that the Weyl group is isomorphic to $S_M\times S_N$, which permutes those $\delta_i$'s and those $\epsilon_j$'s, respectively. Therefore, there is exactly one odd simple root, which is of the form $\delta_i-\epsilon_j$ for some $1\leq i\leq M$, $1\leq j\leq N$, in any other simple system in the Weyl group orbit of $\Pi^{st}$. It implies that the following two simple systems of $\gl_{3|2}$ are in different Weyl group orbits, since $\Pi_1$ contains only one simple odd root and $\Pi_2$ contains 4 odd simple roots:
\begin{align*}
\Pi_1&=\{\delta_1-\delta_2, \delta_2-\delta_3,\delta_3-\epsilon_1,\epsilon_1-\epsilon_2\} \longleftrightarrow  \mathfrak{s}_1=00011,\\
\Pi_2&=\{\delta_1-\epsilon_1, \epsilon_1-\delta_2,\delta_2-\epsilon_2,\epsilon_2-\delta_3\} \longleftrightarrow  \mathfrak{s}_2=01010.
\end{align*}
We refer the reader to \cite[Chapter 1.]{CW} for more details and further applications of 01-sequences.

It is noticed in \cite{Pe2} that the notion of $01$-sequence can be perfectly equipped to the RTT presentation of $Y_{M|N}$. It turns out that, up to an isomorphism, the definition of $Y_{M|N}$ is independent of the choices of the 01-sequence $\so$, and Nazarov's definition corresponds to the case when $\so=\so^{st}$ is the canonical one. Since the RTT presentation can be thought as merely a special case of the parabolic presentation by taking the composition $\mu=(M+N)$, the above observation suggests that it should be possible to obtain a corresponding parabolic presentation for {\em any} $\mu$, which triggered this work.

To be precise, the main result of this article (Theorem \ref{Pg}) is that for an {\em arbitrary} fixed 01-sequence $\so$ of $\gl_{M|N}$ and an {\em arbitrary} fixed composition $\mu$ of $M+N$, a presentation of $Y_{M|N}$ is obtained.

We quickly explain the idea, which is basically generalizing the argument in \cite{BK1} and adapting some techniques in \cite{Go, Pe1} dealing with the sign factors.
Fix a composition $\mu$ of $M+N$ and fix an arbitrary $01$-sequence $\so$ of $\gl_{M|N}$. We first define some distinguished elements in $Y_{M|N}$ associated to $\mu$, denoted by $D$'s, $E$'s and $F$'s, by {\em Gauss decomposition} (or {\em quasideterminants}). 

Roughly speaking, the elements $D$'s are those elements in the diagonal blocks of the block matrix decomposition of $Y_{M|N}$ with respect to $\mu$, while the $E$'s and the $F$'s are those elements in the upper and lower diagonal blocks, respectively. Note that these elements depend on the shape $\mu$, where their parities are determined by the given $\so$. These elements form a generating set for $Y_{M|N}$ (Theorem \ref{gendef}), so the next step is to find enough relations to achieve a presentation.

In the case of \cite{BK1}, if the generators are from two different blocks and the blocks are not ``close", then they commute. Fortunately, this phenomenon remains to be true under our general setting (Lemma~\ref{corcommute}) and it enormously reduces the number of the non-vanishing relations. As a consequence, we only have to focus on the supercommutation relations of the elements that are either in the same block, or their belonging blocks are ``close enough". Let $n$ be the length of $\mu$. When $n=2,3,4$, the situations are less complicated so that we may derive various relations among those generators by direct computation.

Next, we take advantage of the homomorphisms $\psi_L$ and $\zeta_{M|N}$ between super Yangians (see Section 4). These maps carry the relations in the special cases (with $n\leq 4$) to the general case, so that we obtain many relations in $Y_{M|N}$. Finally we prove that we have found enough relations for our presentation.

As a matter of fact, there are already a few results \cite{Go, Pe1} on such a generalization focusing on the canonical case when $\so=\so^{st}$. 
In \cite{Go}, a presentation of $Y_{M|N}$ when $\mu=(1^{M+N})$ is obtained, which is a generalization of Drinfeld's presentation. In \cite{Pe1}, a similar result when $\mu$ is of the form $\mu=(\lambda,\nu)$, where $\lambda$ is a composition of $M$ and $\nu$ is a composition of $N$, is obtained.
However, the results only explained the case when $\so=\so^{st}$. Moreover, the compositions $\mu$ therein are very special so that the elements in one block must have the same parity so the super phenomenon only happens at a few specific places. 

Under our setting, $\mu$ and $\so$ are {\em both arbitrary} so that an even element and an odd element could exist in the same block. As a result, the super phenomenon could happen everywhere. Hence in our current consideration, the signs arising from the $\mathbb{Z}_2$-grading are much more involved than \cite{Go, Pe1} and one needs more elaborate notation and extra care when treating the sign issues. Roughly speaking, we need to correctly insert the necessary sign factors in almost every formula. Certainly, our main theorem covers the above results as special cases.

Finally we mention one undergoing application of our result, which can also be thought as the true motivation of this work. Following the classical $\gl_N$ case, one may try to generalize the argument in \cite{BK2} so that a realization of finite $W$-superalgebras of type A in terms of the super Yangian $Y_{M|N}$ can be obtained. Such a connection was observed in \cite{BR} for {\em rectangular} nilpotent elements, but this is still open for the general nilpotent case.

In fact, based on \cite{Go, Pe1}, there are already some partial results \cite{BBG, Pe2, Pe3} about the the realization of finite $W$-superalgebras when the nilpotent element is {\em principal} or satisfying certain restrictions. As noticed in \cite{BBG, Pe3}, if we want to generalize the argument in \cite{BK2} to the case of $Y_{M|N}$ in full generality, then a more general presentation of $Y_{M|N}$ is required. One of the reasons is that the {\em shifted super Yangian}, which is a subalgebra of $Y_{M|N}$, can be defined only under some nice assumptions as in \cite{BBG, Pe3}. With our new presentations, the shifted super Yangian can be defined for {\em any} given nilpotent element in $\gl_{M|N}$ so that it is possible to establish the connection in full generality, and this is currently in progress by the author.

This article is organized in the following fashion. In Section 2, we recall some basic properties of $Y_{M|N}$. 
In Section 3, we explicitly define the parabolic generators by means of Gauss decomposition and show that they indeed form a generating set. 
In Section 4, we define some homomorphisms between super Yangians so that we may reduce the general case to some less complicated special cases. Section 5 and 6 are devoted to further study about these special cases. Our main theorem is formulated and proved in Section 7.

\section{Prelimilaries}
Let $\so$ be a \zomn-sequence (or 01-sequence for short) of $\gl_{M|N}$, which is a sequence consisting of $M$ $0$'s and $N$ $1$'s, arranged in a row with respect to a certain order. It is well-known \cite[Proposition 1.27]{CW} that there is a bijection between the set of $0^M1^N$-sequence and the Weyl group orbits of simple systems of $\gl_{M|N}$.

For homogeneous elements $A$ and $B$ in a $\Z_2$-graded algebra $L$, the {\em supercommutator of $A$ and $B$} is defined by 
\[
\big[ A,B \big] = AB-(-1)^{\pa{A}\pa{B}}BA,
\]
where $\pa{A}$ is the $\Z_2$-grading of $A$ in $L$, or called the {\em parity} of $A$. By convention, a homogeneous element $A$ is called $even$ if $\pa{A}=0$, and called $odd$ if $\pa{A}=1$. For each $1\leq i\leq M+N$, let $\pa{i}$ denote the $i$-th digit of the fixed \zomn-sequence $\so$.

\begin{definition}
For a given $\so$, the super Yangian associated to the general linear Lie superalgebra $\gl_{M|N}$, denoted by $Y_{M|N}$ hereafter, is the associative $\mathbb{Z}_2$-graded algebra (i.e., superalgebra) over $\mathbb{C}$ generated by the {\em RTT generators} \cite{Na}
\begin{equation}\label{RTTgen}
\left\lbrace t_{i,j}^{(r)}\,| \; 1\le i,j \le M+N; r\ge 1\right\rbrace,
\end{equation}
subject to following relations:
\begin{equation}\label{RTT}
\big[ t_{i,j}^{(r)}, t_{h,k}^{(s)} \big] = (-1)^{\pa{i}\,\pa{j} + \pa{i}\,\pa{h} + \pa{j}\,\pa{h}}
\sum_{g=0}^{\mathrm{min}(r,s) -1} \Big( t_{h,j}^{(g)}\, t_{i,k}^{(r+s-1-g)} -   t_{h,j}^{(r+s-1-g)}\, t_{i,k}^{(g)} \Big),
\end{equation}
where the parity of $t_{i,j}^{(r)}$ for $r>0$ is defined by $\pa{i}+\pa{j}$ (mod 2), and the bracket is understood as the supercommutator. By convention, we set $t_{i,j}^{(0)}:=\del_{ij}$.
\end{definition}
Similar to the $\gl_{M|N}$ case, for $r>0$, the element $t_{i,j}^{(r)}$ is called an $even$ ($odd$, respectively) element if its parity is $0$ ($1$, respectively). 
The original definition in \cite{Na} corresponds to the case when $\so$ is the canonical one; that is, $\so$ is of the form
\[
\so=\so^{st}:=\stackrel{M}{\overbrace{0\ldots0}}\,\stackrel{N}{\overbrace{1\ldots1}}.
\]
As observed in \cite{Pe2}, the definition of $Y_{M|N}$ is independent of the choices of $\so$, up to an isomorphism, so we often omit it in the notation.

For each $1\leq i,j\leq M+N$, define the formal power series 
\[
t_{i,j}(u):= \sum_{r\geq 0} t_{i,j}^{(r)}u^{-r} \in Y_{M|N}[[u^{-1}]].
\]
It is well-known \cite[p.125]{Na} that $Y_{M|N}$ is a Hopf-superalgebra, where the comultiplication 
$\Del:Y_{M|N}\rightarrow Y_{M|N}\otimes Y_{M|N}$ is given by 
\begin{equation}\label{Del}
\Del(t_{i,j}^{(r)})=\sum_{s=0}^r \sum_{k=1}^{M+N} t_{i,k}^{(r-s)}\otimes t_{k,j}^{(s)}.
\end{equation}
Moreover, there exists a surjective homomorphism $$\ev:Y_{M|N}\rightarrow U(\gl_{M|N})$$ called the {\em evaluation homomorphism}, defined by
\begin{equation}\label{ev}
\ev\big(t_{i,j}(u)\big):= \del_{ij} + (-1)^{|i|} e_{ij}u^{-1},
\end{equation}
where $e_{ij}\in\gl_{M|N}$ is the elementary matrix.

For homogeneous elements $x_1,\ldots,x_s$ in a superalgebra $A$, a {\em supermonomial} in $x_1,\ldots,x_s$ means a monomial of the form $x_1^{i_1}\cdots x_s^{i_s}$ for some $i_1,\ldots,i_s\in \mathbb{Z}_{\geq 0}$ and $i_j\leq 1$ if $x_j$ is odd. The following proposition is a PBW theorem for $Y_{M|N}$, where the proof in \cite{Go} works perfectly for any fixed $\so$.
\begin{proposition}\cite[Theorem 1]{Go}\label{PBWSY}
The set of supermonomials in the following elements of $Y_{M|N}$
\[
\left\lbrace t_{i,j}^{(r)}\, |\, 1\leq i,j\leq M+N,  r\geq 1 \right\rbrace
\] taken in some fixed order
forms a linear basis for $Y_{M|N}$.
\end{proposition}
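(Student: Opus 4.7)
The plan is to establish the PBW basis via an associated graded argument in the style of \cite[Theorem 1]{Go}. Filter $Y_{M|N}$ by assigning $t_{i,j}^{(r)}$ filtration degree $r$ and extending multiplicatively, and write $\overline{t}_{i,j}^{\,(r)}$ for the principal symbol of $t_{i,j}^{(r)}$ in the associated graded superalgebra $\gr Y_{M|N}$. Each summand on the right hand side of the RTT relation \eqref{RTT} has filtration degree $g+(r+s-1-g)=r+s-1$ (with the convention that $\delta_{hj}$ sits in degree $0$), while the left hand side is a priori of filtration degree $r+s$. Consequently $\overline{t}_{i,j}^{\,(r)}\overline{t}_{h,k}^{\,(s)}=(-1)^{\pa{i}\pa{j}+\pa{i}\pa{h}+\pa{j}\pa{h}}\overline{t}_{h,k}^{\,(s)}\overline{t}_{i,j}^{\,(r)}$ in $\gr Y_{M|N}$, so $\gr Y_{M|N}$ is supercommutative on its generators. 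The supersymmetric algebra $\mathcal{P}$ on symbols $\overline{t}_{i,j}^{\,(r)}$ of parity $\pa{i}+\pa{j}$ therefore surjects onto $\gr Y_{M|N}$, and fixing any total order on the $t_{i,j}^{(r)}$ yields a spanning set of supermonomials for $Y_{M|N}$.

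The harder direction is linear independence, equivalently the injectivity of the surjection $\mathcal{P}\twoheadrightarrow\gr Y_{M|N}$. The plan is to separate ordered supermonomials by probing them with tensor products of evaluation modules. For each $k\ge 1$ and each generic tuple $(a_1,\dots,a_k)\in\C^k$, form
\[
\Phi_{a_1,\dots,a_k}:=(\ev\otimes\cdots\otimes\ev)\circ(\tau_{a_1}\otimes\cdots\otimes\tau_{a_k})\circ\Del^{(k-1)}\colon Y_{M|N}\longrightarrow U(\glMN)^{\otimes k},
\]
where $\tau_a$ is the shift automorphism $t_{i,j}(u)\mapsto t_{i,j}(u-a)$. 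Using \eqref{Del} and \eqref{ev}, this map sends the series $t_{i,j}(u)$ to the matrix product over $\ell=1,\dots,k$ of the factors $\delta_{ij}+(-1)^{\pa{i}}e_{ij}^{[\ell]}(u-a_\ell)^{-1}$, where $e_{ij}^{[\ell]}$ denotes the matrix unit inserted in the $\ell$-th tensor slot. Extracting coefficients of $u^{-r}$ and invoking the classical PBW theorem for $U(\glMN)$ in each tensor slot, combined with a Vandermonde argument in the continuous parameters $a_\ell$, one checks that distinct ordered supermonomials in $\mathcal{P}$ have linearly independent images for $k$ sufficiently large. This forces the surjection $\mathcal{P}\twoheadrightarrow\gr Y_{M|N}$ to be an isomorphism, and the proposition follows.

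The chief obstacle, relative to the canonical case $\so=\so^{st}$, is keeping the sign conventions straight throughout. Under arbitrary $\so$ the sign prefactor in \eqref{RTT} and the sign $(-1)^{\pa{i}}$ in \eqref{ev} no longer sort themselves into a few predictable blocks, and the super Hopf structure underlying \eqref{Del} carries implicit Koszul signs when one passes to tensor powers of $Y_{M|N}$. Nevertheless, every ingredient used above — the filtration count in the associated graded step, the matrix product formula for $\Phi(t_{i,j}(u))$, and the Vandermonde separation — depends on the parities $\pa{i}$ only through the formulas \eqref{RTT}, \eqref{Del} and \eqref{ev}, each of which is phrased uniformly in $\so$. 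Hence Gow's argument from \cite[Theorem 1]{Go} transfers verbatim to arbitrary $\so$.
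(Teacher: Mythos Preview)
The paper does not supply its own proof here; it simply cites \cite[Theorem 1]{Go} and remarks that Gow's argument carries over verbatim to an arbitrary $\so$. Your outline is a faithful summary of exactly that argument --- spanning via the filtration that makes $\gr Y_{M|N}$ supercommutative, and linear independence via the family of maps into $U(\glMN)^{\otimes k}$ built from the coproduct, the shift automorphisms, and the evaluation homomorphism --- so you and the paper are in agreement.

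One small slip worth flagging: the sign governing supercommutativity in $\gr Y_{M|N}$ should be $(-1)^{(\pa{i}+\pa{j})(\pa{h}+\pa{k})}$, coming from the definition of the super bracket and the parities $\pa{t_{i,j}^{(r)}}=\pa{i}+\pa{j}$, not the RTT prefactor $(-1)^{\pa{i}\pa{j}+\pa{i}\pa{h}+\pa{j}\pa{h}}$ you wrote. The conclusion (supercommutativity, hence spanning by ordered supermonomials) is unaffected.
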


Define the $loop$ $filtration$ on $Y_{M|N}$
\begin{equation}\label{filt2}\notag
L_0 Y_{M|N} \subseteq L_1 Y_{M|N} \subseteq L_2 Y_{M|N} \subseteq \cdots
\end{equation}
by setting $\deg t_{ij}^{(r)}=r-1$ for each $r\geq 1$ and let $L_kY_{M|N}$ be the span of all supermonomials of the form 
$$t_{i_1j_1}^{(r_1)}t_{i_2j_2}^{(r_2)}\cdots t_{i_sj_s}^{(r_s)}$$
with total degree not greater than $k$. The associated graded superalgebra is denoted by $\gr Y_{M|N}.$

Let $\gl_{M|N}[x]$ denote the {\em loop superalgebra} $\gl_{M|N}\otimes \mathbb{C}[x]$ with the standard basis 
$$\lbrace e_{ij}x^r \,|\, 1\leq i,j\leq M+N, r\geq 0\rbrace$$ 
and let $U(\gl_{M|N}[x])$ denote its universal enveloping algebra. The next corollary follows from Proposition~\ref{PBWSY}.

\begin{corollary}\cite[Corollary 1]{Go}\label{Yloop}
The graded superalgebra $\gr Y_{M|N}$ is isomorphic to $U(\gl_{M|N}[x])$ by the map
\begin{center}
$\gr Y_{M|N}\rightarrow U(\gl_{M|N}[x])$\\
$\gr_{r-1}t_{ij}^{(r)}\mapsto (-1)^{\pa{i}}e_{ij}x^{r-1}.$
\end{center}
\end{corollary}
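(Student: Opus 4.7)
The plan is to construct the inverse map $\widetilde{\psi}: U(\gl_{M|N}[x]) \to \gr Y_{M|N}$ given on generators by $e_{ij}x^{r-1} \mapsto (-1)^{\pa{i}}\,\bar{t}_{ij}^{(r)}$, where $\bar{t}_{ij}^{(r)}:=\gr_{r-1} t_{ij}^{(r)}$, show it is a well-defined superalgebra homomorphism, and then apply the PBW theorems on both sides to conclude it is an isomorphism; the corollary follows by inverting.

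The first substantive step is to isolate the top-degree part of the RTT relation \eqref{RTT}. Since $\deg t_{ab}^{(r)} = r-1$, every summand on the right-hand side with $g\geq 1$ has loop degree $(g-1)+(r+s-2-g)=r+s-3$, strictly below $r+s-2$; only the $g=0$ contributions, where $t_{hj}^{(0)}=\delta_{hj}$ or $t_{ik}^{(0)}=\delta_{ik}$ collapses one factor to a scalar, survive at top filtration. Extracting the $\gr_{r+s-2}$ component of \eqref{RTT} therefore yields, in $\gr Y_{M|N}$,
\begin{equation*}
\bigl[\bar{t}_{ij}^{(r)},\,\bar{t}_{hk}^{(s)}\bigr] \;=\; (-1)^{\pa{i}\pa{j}+\pa{i}\pa{h}+\pa{j}\pa{h}}\bigl(\delta_{hj}\,\bar{t}_{ik}^{(r+s-1)} - \delta_{ik}\,\bar{t}_{hj}^{(r+s-1)}\bigr).
\end{equation*}

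Next, I would verify well-definedness of $\widetilde{\psi}$. A direct computation in $\gl_{M|N}[x]$ using $e_{ij}e_{hk}=\delta_{jh}e_{ik}$ gives the standard loop bracket
\begin{equation*}
\bigl[e_{ij}x^{r-1},\,e_{hk}x^{s-1}\bigr]=\delta_{jh}\,e_{ik}x^{r+s-2}-(-1)^{(\pa{i}+\pa{j})(\pa{h}+\pa{k})}\delta_{ik}\,e_{hj}x^{r+s-2}.
\end{equation*}
Applying $\widetilde{\psi}$ and comparing with the previous display reduces to a short sign check: when $h=j$ or $i=k$, the Koszul sign $(-1)^{(\pa{i}+\pa{j})(\pa{h}+\pa{k})}$ combined with the twist $(-1)^{\pa{i}+\pa{h}}$ reproduces the asymmetric RTT sign $(-1)^{\pa{i}\pa{j}+\pa{i}\pa{h}+\pa{j}\pa{h}}$ exactly. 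The purpose of the factor $(-1)^{\pa{i}}$ in the definition of $\widetilde{\psi}$ is precisely to absorb this discrepancy. Thus $\widetilde{\psi}$ is a Lie superalgebra map on $\gl_{M|N}[x]$ and extends by the universal property to a superalgebra homomorphism on $U(\gl_{M|N}[x])$.

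Finally, I would conclude by PBW. Surjectivity is immediate since the $\bar{t}_{ij}^{(r)}$ generate $\gr Y_{M|N}$ as a superalgebra. For injectivity, fix a total order on the generators: the PBW theorem for enveloping algebras of Lie superalgebras furnishes a basis of $U(\gl_{M|N}[x])$ consisting of ordered supermonomials in the $e_{ij}x^{r-1}$, and $\widetilde{\psi}$ sends these (up to signs) to the corresponding ordered supermonomials in the $\bar{t}_{ij}^{(r)}$, which are linearly independent by Proposition~\ref{PBWSY} (passing from the filtration basis to the associated graded). Hence $\widetilde{\psi}$ is an isomorphism, as desired. The main obstacle throughout is purely one of sign bookkeeping in the middle step; everything else is a standard filtration-plus-PBW package.
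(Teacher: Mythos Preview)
Your proposal is correct and is essentially the same argument the paper has in mind: the paper simply says the corollary ``follows from Proposition~\ref{PBWSY}'', and what you have written is exactly the unpacking of that sentence---extract the top-degree piece of \eqref{RTT} to see that the $\bar t_{ij}^{(r)}$ satisfy (a sign-twisted version of) the loop superalgebra relations, and then invoke the two PBW theorems. Your sign bookkeeping checks out.
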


\section{Parabolic generators}
 Let $\mu=(\mu_1,\ldots,\mu_n)$ be a given composition of $M+N$ with length $n$ and fix a \zomn-sequence $\so$. We break $\so$ into $n$ subsequences according to $\mu$; that is, 
\[\so=\so_1\so_2\ldots\so_n,\] 
where $\so_1$ is the subsequence consisting of the first $\mu_1$ digits of $\so$, $\so_2$ is the subsequence consisting of the next $\mu_2$ digits of $\so$, and so on.
For example, if we have $\so=011100011$ and $\mu=(2,4,3)$, then
\[
\so=\overbrace{01}^{\so_1} \, \overbrace{1100}^{\so_2} \, \overbrace{011}^{\so_3}.
\]

For each $1\leq a\leq n$, let $p_a$ and $q_a$ denote the number of $0$'s and $1$'s in $\so_a$, respectively. By definition, each $\so_a$ is a $0^{p_a}1^{q_a}$-sequence of $\gl_{p_a|q_a}$. Moreover, for all $1\leq i\leq \mu_a$, define the {\em restricted parity} $\pa{i}_a$ by
\begin{center} $\pa{i}_a$:= the $i$-th digits of $\so_a$. \end{center}
By definition, for each $1\leq a\leq n$ and $1\leq i\leq \mu_a$, we have
\begin{equation}\label{respa}
\pa{i}_a=\pa{\sum_{j=1}^{a-1}\mu_j+i}.
\end{equation}
Basically, the techniques in \cite{BK1, Pe1} work perfectly with the notion of an arbitrary $\so$ that we just introduced earlier. In order to make this article self-contained, we spend some time explaining the notation precisely.

Define the $(M+N)\times (M+N)$ matrix with entries in $Y_{M|N}[[u^{-1}]]$ by
\[
T(u):=\Big( t_{i,j}(u) \Big)_{1\leq i,j\leq M+N}
\]
Note that for any fixed $\so$, the leading minors of the matrix $T(u)$ are invertible and hence it possesses a $Gauss$ $decomposition$ \cite{GR} with respect to $\mu$; that is,
\begin{equation}\label{T=FDE}
T(u) = F(u) D(u) E(u)
\end{equation}
for unique {\em block matrices} $D(u)$, $E(u)$ and $F(u)$ of the form
$$
D(u) = \left(
\begin{array}{cccc}
D_{1}(u) & 0&\cdots&0\\
0 & D_{2}(u) &\cdots&0\\
\vdots&\vdots&\ddots&\vdots\\
0&0 &\cdots&D_{n}(u)
\end{array}
\right),
$$

$$
E(u) =
\left(
\begin{array}{cccc}
I_{\mu_1} & E_{1,2}(u) &\cdots&E_{1,n}(u)\\
0 & I_{\mu_2} &\cdots&E_{2,n}(u)\\
\vdots&\vdots&\ddots&\vdots\\
0&0 &\cdots&I_{\mu_{n}}
\end{array}
\right),\:
$$

$$
F(u) = \left(
\begin{array}{cccc}
I_{\mu_1} & 0 &\cdots&0\\
F_{2,1}(u) & I_{\mu_2} &\cdots&0\\
\vdots&\vdots&\ddots&\vdots\\
F_{n,1}(u)&F_{n,2}(u) &\cdots&I_{\mu_{n}}
\end{array}
\right),
$$
where
\begin{align}
D_a(u) &=\big(D_{a;i,j}(u)\big)_{1 \leq i,j \leq \mu_a},\\
E_{a,b}(u)&=\big(E_{a,b;i,j}(u)\big)_{1 \leq i \leq \mu_a, 1 \leq j \leq \mu_b},\\
F_{b,a}(u)&=\big(F_{b,a;i,j}(u)\big)_{1 \leq i \leq \mu_b, 1 \leq j \leq \mu_a},
\end{align}
are $\mu_a \times \mu_a$,
$\mu_a \times \mu_b$
and  $\mu_b \times\mu_a$ matrices, respectively, for all $1\le a\le n$ in (3.3)
and all $1\le a<b\le n$ in (3.4) and (3.5).
Also note that all the submatrices $D_{a}(u)$'s are invertible, and we define the $\mu_a\times\mu_a$ matrix
$D_a^{\prime}(u)=\big(D_{a;i,j}^{\prime}(u)\big)_{1\leq i,j\leq \mu_a}$ by
\begin{equation*}
D_a^{\prime}(u):=\big(D_a(u)\big)^{-1}.
\end{equation*}
The entries of these matrices are certain power series
\begin{eqnarray*}
&D_{a;i,j}(u) =& \sum_{r \geq 0} D_{a;i,j}^{(r)} u^{-r},\\
&D_{a;i,j}^{\prime}(u) =& \sum_{r \geq 0}D^{\prime(r)}_{a;i,j} u^{-r},\\
&E_{a,b;i,j}(u) =& \sum_{r \geq 1} E_{a,b;i,j}^{(r)} u^{-r},\\
&F_{b,a;i,j}(u) =& \sum_{r \geq 1} F_{b,a;i,j}^{(r)} u^{-r}.
\end{eqnarray*}
In addition, for $1\leq a\leq n-1$, we set
\begin{eqnarray*}
&E_{a;i,j}(u) :=& E_{a,a+1;i,j}(u)=\sum_{r \geq 1}  E_{a;i,j}^{(r)} u^{-r},\\
&F_{a;i,j}(u) :=& F_{a+1,a;i,j}(u)=\sum_{r \geq 1} F_{a;i,j}^{(r)} u^{-r}.
\end{eqnarray*}

We will show that the coefficients of these series forms a generating set for $Y_{M|N}$.
As a matter of fact, one may describe all these series in terms of the RTT generators explicitly by \textit{quasideterminants} \cite{GR}. Here we follow the notation in \cite[(4.3)]{BK1}. Suppose that $A, B, C$ and $D$ are $a \times a$, $a \times b$, $b \times a$ and $b \times b$ matrices respectively with entries in some ring.
Assuming that the matrix $A$ is invertible, we define
\begin{equation*}
\left|
\begin{array}{cc}
A&B\\
C&
\hbox{\begin{tabular}{|c|}\hline$D$\\\hline\end{tabular}}
\end{array}
\right| := D - C A^{-1} B.
\end{equation*}
Write the matrix $T(u)$ in block form according to $\mu$ as
$$
T(u) = \left(
\begin{array}{lll}
{^\mu}T_{1,1}(u)&\cdots&{^\mu}T_{1,n}(u)\\
\vdots&\ddots&\cdots\\
{^\mu}T_{n,1}(u)&\cdots&{^\mu}T_{n,n}(u)\\
\end{array}
\right),
$$ where each ${^\mu}T_{a,b}(u)$ is a $\mu_a \times \mu_b$ matrix.

\begin{proposition}\cite{GR}\label{quasi} We have
\begin{align}\label{quasid}
&D_a(u) =
\left|
\begin{array}{cccc}
{^\mu}T_{1,1}(u) & \cdots & {^\mu}T_{1,a-1}(u)&{^\mu}T_{1,a}(u)\\
\vdots & \ddots &\vdots&\vdots\\
{^\mu}T_{a-1,1}(u)&\cdots&{^\mu}T_{a-1,a-1}(u)&{^\mu}T_{a-1,a}(u)\\
{^\mu}T_{a,1}(u) & \cdots & {^\mu}T_{a,a-1}(u)&
\hbox{\begin{tabular}{|c|}\hline${^\mu}T_{a,a}(u)$\\\hline\end{tabular}}
\end{array}
\right|,\\[4mm]
&E_{a,b}(u) =\label{quasie}
D^{\prime}_a(u)
\left|\begin{array}{cccc}
{^\mu}T_{1,1}(u) & \cdots &{^\mu}T_{1,a-1}(u)& {^\mu}T_{1,b}(u)\\
\vdots & \ddots &\vdots&\vdots\\
{^\mu}T_{a-1,1}(u) & \cdots & {^\mu}T_{a-1,a-1}(u)&{^\mu}T_{a-1,b}(u)\\
{^\mu}T_{a,1}(u) & \cdots & {^\mu}T_{a,a-1}(u)&
\hbox{\begin{tabular}{|c|}\hline${^\mu}T_{a,b}(u)$\\\hline\end{tabular}}
\end{array}
\right|,\\[4mm]
&F_{b,a}(u) =\label{quasif}
\left|
\begin{array}{cccc}
{^\mu}T_{1,1}(u) & \cdots &{^\mu}T_{1,a-1}(u)& {^\mu}T_{1,a}(u)\\
\vdots & \ddots &\vdots&\vdots\\
{^\mu}T_{a-1,1}(u) & \cdots & {^\mu}T_{a-1,a-1}(u)&{^\mu}T_{a-1,a}(u)\\
{^\mu}T_{b,1}(u) & \cdots & {^\mu}T_{b,a-1}(u)&
\hbox{\begin{tabular}{|c|}\hline${^\mu}T_{b,a}(u)$\\\hline\end{tabular}}
\end{array}
\right|D^{\prime}_a(u),
\end{align}
for all $1\leq a\leq n$ in (\ref{quasid}) and $1\leq a<b\leq n$ in (\ref{quasie}), (\ref{quasif}).
\end{proposition}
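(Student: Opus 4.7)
My plan is to treat the proposition as a special case of the Gelfand--Retakh quasideterminant formula for Gauss decompositions \cite{GR}: the block Gauss decomposition and its quasideterminant identities are formal identities valid over any associative algebra in which the relevant leading principal blocks are invertible. Hence the only things I need to check are (i) that $T(u)$ and all its leading principal block submatrices really are invertible in our formal setting, and (ii) that the blocks $D_a(u), E_{a,b}(u), F_{b,a}(u)$ extracted from the decomposition $T=FDE$ match the Schur-complement expressions sitting inside the quasideterminants.

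For (i), I would observe that $T(u) = I_{M+N} + O(u^{-1})$ over $Y_{M|N}[[u^{-1}]]$, so $T(u)$ and every leading principal block submatrix $T^{(a)}(u) := ({}^\mu T_{i,j}(u))_{1 \leq i,j \leq a}$ are invertible. This guarantees existence and uniqueness of the decomposition $T=FDE$ in (3.1), and also of the truncated decomposition $T^{(a)} = F^{(a)} D^{(a)} E^{(a)}$, where $F^{(a)}, D^{(a)}, E^{(a)}$ are the upper-left $a \times a$ block corners of $F, D, E$; this truncation is consistent because the blockwise product $FDE$ only involves indices $\leq a$ in each block entry $(i,j)$ with $i,j \leq a$. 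In particular every $D_a(u)$ is invertible, so $D'_a(u)$ really makes sense.

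For \eqref{quasid}, I would group $T^{(a)}(u)$ as a $2 \times 2$ block matrix with $(1,1)$-block $T^{(a-1)}(u)$ and $(2,2)$-block ${}^\mu T_{a,a}(u)$ and use the standard identity
$$\begin{pmatrix} A & B \\ C & Z \end{pmatrix} = \begin{pmatrix} I & 0 \\ CA^{-1} & I \end{pmatrix} \begin{pmatrix} A & 0 \\ 0 & Z - CA^{-1}B \end{pmatrix} \begin{pmatrix} I & A^{-1}B \\ 0 & I \end{pmatrix}.$$
By uniqueness of Gauss decomposition, the $(2,2)$ diagonal block $Z - CA^{-1}B$ must coincide with the corresponding block of $D^{(a)}$, which is precisely $D_a(u)$ since the last block group consists of a single $\mu_a \times \mu_a$ matrix; this yields the right-hand side of \eqref{quasid}. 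For \eqref{quasie} the same trick applies with the last block column of $T^{(a)}(u)$ replaced by $({}^\mu T_{1,b}(u),\ldots,{}^\mu T_{a,b}(u))^T$: expanding $T=FDE$ blockwise gives ${}^\mu T_{i,b}(u) = \sum_{c=1}^{i} F_{i,c}(u) D_c(u) E_{c,b}(u)$ for $i<a$ and an analogous expression for ${}^\mu T_{a,b}(u)$ with one extra term $D_a(u) E_{a,b}(u)$. Factoring the row $({}^\mu T_{a,1},\ldots,{}^\mu T_{a,a-1})$ through $E^{(a-1)}(u)$ on the right and the column $({}^\mu T_{1,b},\ldots,{}^\mu T_{a-1,b})^T$ through $F^{(a-1)}(u) D^{(a-1)}(u)$ on the left causes the telescoping $C(T^{(a-1)})^{-1}B = \sum_{c=1}^{a-1} F_{a,c} D_c E_{c,b}$, leaving Schur complement equal to $D_a(u) E_{a,b}(u)$; multiplying on the left by $D'_a(u)$ gives \eqref{quasie}, and \eqref{quasif} is obtained by the symmetric argument applied on the left.

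I do not expect any genuine obstacle here: this is a formal associative-algebra statement that does not see the $\mathbb{Z}_2$-grading, the super RTT relations, or the specific choice of $\so$, and the only non-trivial input (invertibility of leading principal block submatrices) is automatic from $T(u) \equiv I \pmod{u^{-1}}$. The proposition is therefore essentially a direct transcription of the quasideterminant formulas of \cite{GR}, adapted to our block sizes $\mu_1,\ldots,\mu_n$.
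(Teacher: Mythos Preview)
Your proposal is correct. Note, however, that the paper does not give its own proof of this proposition: it is stated with a direct citation to \cite{GR} and treated as a known formal identity from the theory of quasideterminants. Your argument is essentially a self-contained unpacking of that cited result---you verify invertibility of the leading principal block submatrices via $T(u)\equiv I\pmod{u^{-1}}$, and then deduce the three formulas from the $2\times 2$ block Schur-complement identity combined with uniqueness of the block Gauss factorization. This is exactly the mechanism underlying the Gelfand--Retakh formulas, so your write-up and the paper's citation point to the same proof; you have simply supplied the details that the paper leaves to the reference.
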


Let $T_{a,b;i,j}(u)$ be the $(i,j)$-th entry of the $\mu_a\times\mu_b$ matrix $^\mu T_{a,b}(u)$ and let $T_{a,b;i,j}^{(r)}$ denote the coefficient of $u^{-r}$ in $T_{a,b;i,j}(u)$. As a consequence of Proposition \ref{quasi}, we have
\begin{equation}\label{Jacobi1}
E_{b-1;i,j}^{(1)}=T_{b-1,b;i,j}^{(1)}, \qquad F_{b-1;j,i}^{(1)}=T_{b,b-1;j,i}^{(1)},
\end{equation}
for all $2\leq b\leq n$, all $1\leq i\leq \mu_{b-1}, 1\leq j\leq \mu_{b}$.
In particular,
\begin{equation}\label{Dtident}
D_{1;i,j}^{(r)}=T_{1,1;i,j}^{(r)}=t_{i,j}^{(r)},\quad \text{for all}\quad 1\leq i,j\leq \mu_1, r\geq 0.
\end{equation}

Note that the matrix $T(u)$ is always invertible and we define the entries of its inverse by
\[
\big(T(u)\big)^{-1}:=\big(t_{ij}^{\prime}(u)\big)_{i,j=1}^{M+N}.
\]
Taking inverse to the matrix equation (\ref{T=FDE}), we have
\begin{equation}\label{Tp=FDE}
t_{i,j}^{\prime}(u) = \big( F(u)^{-1} D(u)^{-1} E(u)^{-1} \big)_{i,j}
\end{equation}

Applying the technique in \cite[Section 1]{MNO}, we may rewrite equation (\ref{RTT}) into the following series form
$$
(u-v)[t_{ij}(u),t_{hk}(v)]
=(-1)^{\pa{i}\,\pa{j}+\pa{i}\,\pa{h}+\pa{j}\,\pa{h}}
\Big(t_{hj}(u)t_{ik}(v)-t_{hj}(v)t_{ik}(u)\Big).
$$
The next lemma, which will be used frequently later, can be deduced from the above equation by a similar calculation as in \cite[Section 2]{Pe1}.
\begin{lemma}
\begin{multline}\label{usefull}
(u-v)[t_{ij}(u),t'_{hk}(v)]
=(-1)^{\pa{i}\,\pa{j}+\pa{i}\,\pa{h}+\pa{j}\,\pa{h}}\times\\
\Big(\delta_{h,j}
\sum_{g=1}^{M+N}
t_{ig}(u)t'_{gk}(v)-\delta_{i,k}\sum_{g=1}^{M+N}t'_{hg}(v)t_{gj}(u)\Big),
\end{multline}
for all $1\leq i,j,h,k\leq M+N$.
\end{lemma}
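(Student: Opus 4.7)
The strategy is to derive the identity directly from the series form of the RTT relation stated immediately above the lemma, using only the two fundamental inversion identities $\sum_g t_{ig}(v)\,t'_{gl}(v)=\delta_{il}$ and $\sum_g t'_{hg}(v)\,t_{gj}(v)=\delta_{hj}$ together with the graded Leibniz rule for the supercommutator. This follows the same template as in the purely even case, but with careful sign bookkeeping.

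First I would apply $[t_{ij}(u),-]$ to the identity $\sum_g t'_{hg}(v)\,t_{gk}(v)=\delta_{hk}$. By the graded Leibniz rule $[A,BC]=[A,B]C+(-1)^{\pa{A}\pa{B}}B[A,C]$, the right-hand side produces two sums; isolating $[t_{ij}(u),t'_{hg}(v)]\,t_{gk}(v)$ on one side, then multiplying on the right by $t'_{kl}(v)$ and summing over $k$ collapses the product $t_{gk}(v)t'_{kl}(v)=\delta_{gl}$, which yields the key preliminary relation
\[
[t_{ij}(u),t'_{hl}(v)] \;=\; -\sum_{k,g}(-1)^{(\pa{i}+\pa{j})(\pa{h}+\pa{g})}\,t'_{hg}(v)\,[t_{ij}(u),t_{gk}(v)]\,t'_{kl}(v).
\]

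Next I would multiply through by $(u-v)$ and substitute the series form of the super RTT relation
\[
(u-v)[t_{ij}(u),t_{gk}(v)] = (-1)^{\pa{i}\pa{j}+\pa{i}\pa{g}+\pa{j}\pa{g}}\bigl(t_{gj}(u)t_{ik}(v)-t_{gj}(v)t_{ik}(u)\bigr).
\]
The crucial sign simplification is that
\[
(\pa{i}+\pa{j})(\pa{h}+\pa{g}) + \pa{i}\pa{j}+\pa{i}\pa{g}+\pa{j}\pa{g} \equiv \pa{i}\pa{j}+\pa{i}\pa{h}+\pa{j}\pa{h} \pmod{2},
\]
since the $\pa{g}$-dependent terms cancel. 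This makes the overall sign independent of the summation indices and pulls out of both sums. Splitting the resulting double sum into two pieces and applying the inversion identities $\sum_k t_{ik}(v)t'_{kl}(v)=\delta_{il}$ to the first piece and $\sum_g t'_{hg}(v)t_{gj}(v)=\delta_{hj}$ to the second piece yields exactly the claimed formula (after renaming $l\mapsto k$).

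The main obstacle is the sign bookkeeping in step two: one has to verify carefully that the compound sign from the Leibniz rule combines with the sign from the super RTT relation in a way that is independent of the summation variable $g$, as otherwise the inversion identities would not factor cleanly. The calculation is short once set up correctly, and the rest of the derivation is mechanical, mirroring the argument in \cite[Section 2]{Pe1} for the standard 01-sequence.
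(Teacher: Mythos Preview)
Your proof is correct and follows precisely the standard derivation that the paper is alluding to when it cites \cite[Section 2]{Pe1}: apply $[t_{ij}(u),-]$ to the inversion identity $\sum_g t'_{hg}(v)t_{gk}(v)=\delta_{hk}$, then sandwich with $t'_{kl}(v)$ on the right and substitute the series form of the RTT relation. Your sign check, showing that the $\pa{g}$-dependence drops out and leaves the uniform prefactor $(-1)^{\pa{i}\pa{j}+\pa{i}\pa{h}+\pa{j}\pa{h}}$, is exactly the point of the computation, and the paper's own account adds nothing beyond the reference.
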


\begin{lemma}
For each pair $a$, $b$ such that $1<a+1<b\leq n-1$ and $1\leq i\leq\mu_a$, $1 \leq j \leq \mu_b$, we have
\begin{equation}
E_{a,b;i,j}^{(r)} = (-1)^{\pa{k}_{b-1}}[E_{a,b-1;i,k}^{(r)}, E_{b-1;k,j}^{(1)}],
\,\,\,\,\,
F_{b,a;j,i}^{(r)} = (-1)^{\pa{k}_{b-1}}[F_{b-1;j,k}^{(1)}, F_{b-1,a;k,i}^{(r)}],\label{ter}
\end{equation} 
for any fixed $1 \leq k \leq \mu_{b-1}$. 
\end{lemma}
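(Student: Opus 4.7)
The plan is to prove the identity for $E_{a,b;i,j}^{(r)}$; the statement for $F_{b,a;j,i}^{(r)}$ follows by the mirror-image argument on the lower-triangular half of the Gauss decomposition (\ref{T=FDE}), using the quasideterminant formula (\ref{quasif}) in place of (\ref{quasie}). The key simplification is that, by (\ref{Jacobi1}), the element $E^{(1)}_{b-1;k,j} = T^{(1)}_{b-1,b;k,j}$ is a single RTT generator, so the right-hand side is a supercommutator of the generating series $E_{a,b-1;i,k}(u)$ with one elementary generator; this is tractable to compute using (\ref{usefull}).

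First I would lift the claim to an identity of generating series,
\[
E_{a,b;i,j}(u) \;=\; (-1)^{\pa{k}_{b-1}}\bigl[\,E_{a,b-1;i,k}(u),\; E^{(1)}_{b-1;k,j}\,\bigr],
\]
from which the statement follows by extracting the coefficient of $u^{-r}$. Next, I would use the quasideterminant formulas (\ref{quasie}) for $E_{a,b}(u)$ and $E_{a,b-1}(u)$. They share the same data in the first $a-1$ block rows and columns together with the row $c=a$, and differ only in the final block column (${^\mu}T_{\cdot, b}(u)$ versus ${^\mu}T_{\cdot, b-1}(u)$). The hypothesis $a+1 < b$ guarantees that the block $b-1$ is strictly interior and disjoint from the quasideterminant data for $E_{a,b-1}(u)$, so forming the supercommutator with $T^{(1)}_{b-1,b;k,j}$ amounts to replacing, inside the quasideterminant expression of $E_{a,b-1}(u)$, the entries of the final block column ${^\mu}T_{\cdot,b-1}(u)$ by their supercommutators with $T^{(1)}_{b-1,b;k,j}$; applying the series form of RTT in (\ref{usefull}) blockwise shows that this substitution collapses the final column precisely to ${^\mu}T_{\cdot,b}(u)$ up to the advertised sign $(-1)^{\pa{k}_{b-1}}$, which reassembles the quasideterminant for $E_{a,b;i,j}(u)$. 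Independence of the auxiliary index $k$ is then automatic from the computation.

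The main obstacle I expect is the careful bookkeeping of the sign factors arising from the $\Z_2$-grading. In the classical and partially-super settings of \cite{BK1, Go, Pe1}, the signs were easier to manage because parities were constant within a block, but under our general assumption on $\so$, the restricted parity $\pa{k}_{b-1}$ can vary with $k$, and the supercommutator on the right incurs different signs depending on the parities of the indices being contracted. Verifying that the identity holds for \emph{every} $k \in \{1, \ldots, \mu_{b-1}\}$ with the precise prefactor $(-1)^{\pa{k}_{b-1}}$ will require meticulous tracking of the parities at each step of the quasideterminant manipulation and of the collapse of the RTT sum.
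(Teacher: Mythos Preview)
Your approach is correct and essentially matches the paper's, which organizes the same computation as an induction on $b-a>1$ with the base case $a=1,\,b=3$ written out explicitly: expand $E_{a,b-1;i,k}(u)$ via (\ref{quasie}), use that the northwestern factors supercommute with $E^{(1)}_{b-1;k,j}=T^{(1)}_{b-1,b;k,j}$ since their indices lie in blocks $1,\ldots,a$, and then apply (\ref{RTT}) with $s=1$ to see that bracketing shifts the last column from block $b-1$ to block $b$ with the sign $(-1)^{\pa{k}_{b-1}}$. One minor correction: the column shift and the supercommutation are consequences of (\ref{RTT}) directly rather than of (\ref{usefull}) (which involves the $t'_{hk}$), and your phrase ``block $b-1$ is disjoint from the quasideterminant data'' should be read as saying the \emph{row} index of $T^{(1)}_{b-1,b;k,j}$ avoids rows $1,\ldots,a$, since block $b-1$ is precisely the last column of that quasideterminant.
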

\begin{proof}
This can be proved by induction on $b-a>1$. We perform the initial step when $a=1$ and $b=3$ for the $E$'s here, while the general case and the statement for the $F$'s can be established in a similar way.

By (\ref{quasie}) and (\ref{Jacobi1}), we have 
$$[E_{1,2;i,h}^{(r)}, E_{2,3;k,j}^{(1)}]=[\sum_{p=1}^{\mu_1}\sum_{s=0}^{r}D_{1;i,p}^{\prime(s)} T_{1,2;p,h}^{(r-s)} ,  T_{2,3;k,j}^{(1)}].$$
Note that we may express $D_{1;i,p}^{\prime(r)}$ in terms of $t_{\alpha,\beta}^{\prime(r)}$, and the subscriptions $1\leq \alpha,\beta\leq \mu_1$ will never overlap with the subscriptions of $T_{2,3;k,j}^{(1)}=t_{\mu_1+k,\mu_1+\mu_2+j}^{(1)}$, where $1\leq k\leq \mu_2, 1\leq j\leq \mu_3$. Thus they supercommute by (\ref{usefull}), and the right-hand side of the equation equals to
$$\sum_{p=1}^{\mu_1}\sum_{s=0}^{r}D_{1;i,p}^{\prime(s)} [ T_{1,2;p,h}^{(r-s)}, T_{2,3;k,j}^{(1)}].$$ 
The bracket can be computed by (\ref{RTT}):
$$[ T_{1,2;p,h}^{(r-s)},  T_{2,3;k,j}^{(1)}]=[t_{p,\mu_1+h}^{(r-s)}, t_{\mu_1+k,\mu_1+\mu_2+j}^{(1)}]=(-1)^{\pa{k}_2} \delta_{h,k} t_{p,\mu_1+\mu_2+j}^{(r-s)}.$$
Thus, for any $1\leq k\leq \mu_2$, we have
$$(-1)^{\pa{k}_2}[E_{1,2;i,k}^{(r)}, E_{2,3;k,j}^{(1)}]=\sum_{p=1}^{\mu_1}\sum_{s=0}^{r}D_{1;i,p}^{\prime(s)}t_{p,\mu_1+\mu_2+j}^{(r-s)}=\sum_{p=1}^{\mu_1}\sum_{s=0}^{r}D_{1;i,p}^{\prime(s)}T_{1,3;p,j}^{(r-s)}.$$
By (\ref{quasie}), the right-hand side of the above equation is exactly $E_{1,3;i,j}^{(r)}$. The general case is similar, except that the expression of $E_{a,b-1;i,h}^{(r)}$ by (\ref{quasie}) is more complicated.
\end{proof}

\begin{theorem}\label{gendef}
The superalgebra $Y_{M|N}$ is generated by the following elements
\begin{align*}
&\big\lbrace D_{a;i,j}^{(r)}, D_{a;i,j}^{\prime(r)} \,|\, {1\leq a\leq n,\; 1\leq i,j\leq \mu_a,\; r\geq 0}\big\rbrace,\\
&\big\lbrace E_{a;i,j}^{(r)} \,|\, {1\leq a< n,\; 1\leq i\leq \mu_a, 1\leq j\leq\mu_{a+1},\; r\geq 1}\big\rbrace,\\
&\big\lbrace F_{a;i,j}^{(r)} \,|\, {1\leq a< n,\; 1\leq i\leq\mu_{a+1}, 1\leq j\leq \mu_a,\; r\geq 1}\big\rbrace.
\end{align*}
\end{theorem}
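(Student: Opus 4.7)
The plan is to show that every RTT generator $t_{i,j}^{(r)}$ lies in the subalgebra $Y'$ of $Y_{M|N}$ generated by the listed $D$, $D'$, $E$, $F$ elements; combined with Proposition~\ref{PBWSY} (or the RTT definition), this yields $Y'=Y_{M|N}$.

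The starting point is the Gauss decomposition $T(u)=F(u)D(u)E(u)$. Writing this as an identity of block matrices with block sizes $\mu_1,\ldots,\mu_n$, and using that $F(u)$ is lower block unitriangular, $D(u)$ block diagonal, and $E(u)$ upper block unitriangular, one gets
\[
{^\mu}T_{a,b}(u)=\sum_{c\le\min(a,b)} F_{a,c}(u)\,D_c(u)\,E_{c,b}(u),
\]
with the conventions $F_{a,a}(u)=I_{\mu_a}$ and $E_{a,a}(u)=I_{\mu_a}$. Extracting the coefficient of $u^{-r}$ and multiplying out expresses each $t_{i,j}^{(r)}$ as a \emph{finite} polynomial in the coefficients $F_{a,c;k,l}^{(s)}$, $D_{c;k,l}^{(s)}$, and $E_{c,b;k,l}^{(s)}$. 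In particular no matrix inversion is needed here; the $D^{\prime(r)}_{a;i,j}$ are included in the generating set for other purposes and are automatically in $Y'$.

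It remains to check that each of the coefficients appearing above lies in $Y'$. The $D_{c;k,l}^{(s)}$ are in $Y'$ by definition. For the ``long'' series $E_{a,b;i,j}^{(r)}$ with $b-a>1$, the reduction formula (\ref{ter}) expresses it as a supercommutator of $E_{a,b-1;i,k}^{(r)}$ with a ``short'' generator $E_{b-1;k,j}^{(1)}$; iterating this identity on $b-a$ reduces $E_{a,b;i,j}^{(r)}$ to a nested supercommutator in the short generators $E_{c;k,l}^{(r)}$ (all of which appear in the stated list). An entirely analogous induction via the second identity in (\ref{ter}) handles the $F$'s. Hence every $t_{i,j}^{(r)}$ lies in $Y'$, completing the proof.

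The only subtlety is tracking the sign factors $(-1)^{\pa{k}_c}$ produced at each step of the supercommutator reduction, but since (\ref{ter}) already packages them cleanly, this is bookkeeping rather than a serious obstacle; the genuine content of the argument is just the block-matrix expansion of $T=FDE$ combined with the inductive reduction of long $E$'s and $F$'s to short ones.
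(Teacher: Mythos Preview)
Your argument is correct and follows essentially the same route as the paper: expand $T(u)=F(u)D(u)E(u)$ blockwise to express each $t_{i,j}^{(r)}$ as a polynomial in the $D$'s and the long $E_{a,b}$'s and $F_{b,a}$'s, then invoke (\ref{ter}) inductively to reduce the long $E$'s and $F$'s to the short generators $E_{a;i,j}^{(r)}$ and $F_{a;i,j}^{(r)}$. Your version is simply more explicit about the block expansion and the induction on $b-a$, but the content is the same.
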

\begin{proof}
Multiplying the matrix equation (\ref{T=FDE}), we see that each $t_{ij}^{(r)}$ can be expressed as a sum of supermonomials in $D_{a;i,j}^{(r)}$, $E_{a,b;i,j}^{(r)}$ and $F_{b,a;i,j}^{(r)}$, in a certain order that all the $F$'s appear before the $D$'s and all the $D$'s appear before the $E$'s. By (\ref{ter}), it is enough to use $D_{a;i,j}^{(r)}$, $E_{a;i,j}^{(r)}$ and $ F_{a;i,j}^{(r)}$ only rather than use all of the $E$'s and the $F$'s.
\end{proof}

\begin{remark}
Actually we don't need those $D_{a;i,j}^{\prime(r)}$ to obtain a generating set. 
They only appear in (\ref{p702}) and (\ref{p706}). One can in fact remove (\ref{p702}) from the defining relations and rewrite (\ref{p706}) into a new expression which is free from those $D^{\prime(r)}_{a;i,j}$.
The reason to include them is to make the presentation look concise.
\end{remark}

The generators of $Y_{M|N}$ in Theorem \ref{gendef} above are called {\em parabolic generators}. Note that these generators depend on the shape $\mu$ and their parities depend on the fixed sequence $\so$ (see (\ref{pad})-(\ref{paf}) later). We will use the notation $Y_\mu$ or $Y_{M|N}(\so)$ or $Y_\mu(\so)$ to emphasize the choice of $\mu$ or $\so$ or both when necessary. The goal of this article is to write down explicitly a set of defining relations of $Y_\mu(\so)$ with respect to the parabolic generators for {\em any} fixed $\mu$ and {\em any} fixed $\so$.

\section{Homomorphisms between super Yangians}
To explicitly write down the relations among the parabolic generators in Theorem \ref{gendef}, we start with the special cases when $n$ are either 2 or 3, that are relatively less complicated. The other relations in full generality can be deduced from these special ones by applying certain nice injective homomorphisms that we are about to introduce.

We start with some notation. Let $\so$ be a fixed $0^M1^N$-sequence. We define 
\begin{itemize}
\item $\mfs$:= the $0^N1^M$-sequence obtained by interchanging the 0's and 1's of $\so$.
\item $\sr$:= the reverse of $\so$.
\item $\sd$:= $(\mfs)^r$, the reverse of $\mfs$.
\end{itemize}
For instance, if $\so=0011010$, then $\mfs=1100101$, $\sr=0101100$, and $\sd=1010011$.
Moreover, if $\so_1$ and $\so_2$ are two 01-sequences, then $\so_1\so_2$ simply means the concatenation of $\so_1$ and $\so_2$.

The following proposition is a generalization of \cite[Proposition 4.1]{Pe1} to an arbitrary 01-sequence $\so$, where the proof is similar so we omit; see also \cite[Section~4]{Go}
\begin{proposition}
\begin{enumerate}
  \item [1.]The map $\rho_{M|N}:Y_{M|N}(\so)\rightarrow Y_{N|M}(\sd)$ defined by
  \[
  \rho_{M|N}\big(t_{ij}(u)\big)=t_{M+N+1-i,M+N+1-j}(-u)
  \]
   is an isomorphism.
  \item [2.]The map $\omega_{M|N}:Y_{M|N}(\so)\rightarrow Y_{M|N}(\so)$ defined by
  \[
  \omega_{M|N}\big(T(u)\big)=\big(T(-u)\big)^{-1}
  \]
   is an automorphism.
  \item [3.]The map $\zeta_{M|N}:Y_{M|N}(\so)\rightarrow Y_{N|M}(\sd)$ defined by
  \[
  \zeta_{M|N}=\rho_{M|N}\circ\omega_{M|N}
  \]
  is an isomorphism.
  \item [4.] Let $p,q\in\mathbb{Z}_{\ge0}$ be given and let $\so_1$ be an arbitrary $0^p1^q$-sequence. 
  Let 
  $$\varphi_{p|q}:Y_{M|N}(\so)\rightarrow Y_{p+M|q+N}(\so_1\so)$$ 
  be the injective homomorphism sending each $t_{i,j}^{(r)}$ in $Y_{M|N}(\so)$ to $t_{p+q+i,p+q+j}^{(r)}$ in $Y_{p+M|q+N}(\so_1\so)$. Then
  the map $\psi_{p|q}:Y_{M|N}(\so)\rightarrow Y_{p+M|q+N}(\so_1\so)$ defined by
  \[
  \psi_{p|q}=\omega_{p+M|q+N}\circ\varphi_{p|q}\circ\omega_{M|N},
  \]
   is an injective homomorphism.
 \end{enumerate}
\end{proposition}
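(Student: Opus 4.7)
The plan is to verify each statement directly from the RTT relation (\ref{RTT}), reducing everything to sign bookkeeping that uses the elementary identity that the $i$-th digit of $\sd$ equals $1-\pa{M+N+1-i}$, where $\pa{\cdot}$ denotes parity in $\so$. Once the signs are under control, the rest of the proof is formal.

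For (1), I would substitute $\rho_{M|N}(t_{i,j}^{(r)})=(-1)^r t_{M+N+1-i,\,M+N+1-j}^{(r)}$, obtained by expanding $t_{i,j}(-u)=\sum_{r\ge 0}(-1)^r t_{i,j}^{(r)}u^{-r}$, into both sides of (\ref{RTT}) in $Y_{M|N}(\so)$ and compare with the corresponding RTT relation in $Y_{N|M}(\sd)$ for the shifted indices $i':=M+N+1-i$, etc. A factor $(-1)^{r+s}$ comes out of the supercommutator on the left, and a factor $(-1)^{r+s-1}$ comes out of each summand on the right, so after cancellation the target relation is reproduced provided the sign factor $(-1)^{\pa{i}\pa{j}+\pa{i}\pa{h}+\pa{j}\pa{h}}$ changes sign under the simultaneous complementation $\pa{\cdot}\mapsto 1-\pa{\cdot}$ of all three parities; a direct expansion shows the exponent does change by exactly $1\pmod 2$, so the signs reconcile. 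For bijectivity, $\rho_{N|M}\circ\rho_{M|N}=\id$ is immediate from the formula, using the observation $(\sd)^\dagger=\so$.

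For (2) and (3), $\omega_{M|N}$ is the classical matrix-inversion-with-sign-twist automorphism of the super Yangian. Using the matrix form of (\ref{RTT}), a short computation (modeled on the corresponding argument in \cite{Go, Pe1}) shows that if $T(u)$ satisfies RTT then so does $T(-u)^{-1}$, establishing that $\omega_{M|N}$ is an algebra endomorphism; since $\omega_{M|N}^{\,2}=\id$ is obvious from the defining formula, $\omega_{M|N}$ is in fact an automorphism, proving (2). Part (3) follows at once as a composition of isomorphisms.

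For (4), $\varphi_{p|q}$ is a homomorphism because the $(p+q+i)$-th digit of $\so_1\so$ equals the $i$-th digit of $\so$, so parities and RTT sign factors match index-for-index; the relations on the subset $\{t_{p+q+i,p+q+j}^{(r)}\}$ inside $Y_{p+M|q+N}(\so_1\so)$ are exactly those defining $Y_{M|N}(\so)$. Injectivity follows from PBW (Proposition \ref{PBWSY}): $\varphi_{p|q}$ sends a PBW basis of the source injectively into part of a PBW basis of the target. Then $\psi_{p|q}=\omega_{p+M|q+N}\circ\varphi_{p|q}\circ\omega_{M|N}$ is an injective homomorphism, being a composition of two automorphisms (from (2)) with an injective homomorphism.

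The main obstacle, such as it is, is the sign bookkeeping in (1) and the parallel verification in (2). One must keep track of three independent sources of signs simultaneously---the factor $(-1)^r$ produced by $u\mapsto -u$ on each series, the complementation $\pa{i}\mapsto 1-\pa{i}$ on all three parity indices in the RTT sign factor, and the signs intrinsic to the supercommutator---and confirm that they recombine to yield exactly the RTT relation associated to the target sequence $\sd$. All remaining steps are routine consequences of PBW and the standard formal properties of $T(u)$ and its inverse.
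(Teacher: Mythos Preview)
Your proposal is correct and follows exactly the standard direct-verification approach that the paper itself omits, merely citing \cite[Proposition~4.1]{Pe1} and \cite[Section~4]{Go} for the details. The sign bookkeeping you outline for part~(1) checks out: complementing all three parities $a,b,c$ changes the exponent $ab+ac+bc$ by $3-2(a+b+c)\equiv 1\pmod 2$, and this extra sign cancels precisely against the $(-1)^{r+s}$ versus $(-1)^{r+s-1}$ discrepancy between the two sides; the remaining parts are routine, as you note.
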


In fact, only the maps $\zeta_{M|N}$ and $\psi_{p|q}$  will be used later so we write down their images explicitly.
\begin{lemma}
Let $1\leq i,j \leq M+N$.
\begin{enumerate}
  \item [1.]For any $p,q\in\mathbb{Z}_{\geq 0}$, we have \begin{equation}\label{psit}
  \psi_{p|q}\big(t_{ij}(u)\big)=
  \left| \begin{array}{cccc} t_{1,1}(u) &\cdots &t_{1,p+q}(u) &t_{1, p+q+j}(u)\\
         \vdots &\ddots &\vdots &\vdots \\
         t_{p+q,1}(u) &\cdots &t_{p+q,p+q}(u) &t_{p+q, p+q+j}(u)\\
         t_{p+q+i, 1}(u) &\cdots &t_{p+q+i,p+q}(u) &\boxed{t_{p+q+i, p+q+j}(u)}
         \end{array} \right|.
         \end{equation}
  \item [2.] \begin{equation}\label{zetat}
        \zeta_{M|N}\big(t_{ij}(u)\big)=t_{M+N+1-i,M+N+1-j}^{\prime}(u).
        \end{equation}
\end{enumerate}
\end{lemma}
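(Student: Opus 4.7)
The strategy is to trace each element through the defining compositions of $\psi_{p|q}$ and $\zeta_{M|N}$ entry by entry, then invoke the block matrix inversion identity (Schur complement), which is exactly what a quasideterminant computes.

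For part (2), I would first apply $\omega_{M|N}$, using $\omega_{M|N}(T(u)) = T(-u)^{-1}$ entry-wise, to obtain $\omega_{M|N}(t_{ij}(u)) = t'_{ij}(-u)$. Next, I would note that $\rho_{M|N}$ is an algebra homomorphism sending the generating matrix $T(v)$ to the matrix $S(v)$ whose $(k,\ell)$-entry is $t_{w(k),w(\ell)}(-v)$, where $w(k):=M+N+1-k$; hence it sends $T(v)^{-1}$ to $S(v)^{-1}$. A short verification using $\sum_m S(v)_{km}(S(v)^{-1})_{m\ell} = \delta_{k\ell}$ gives $(S(v)^{-1})_{ij} = t'_{w(i),w(j)}(-v)$. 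Specializing $v=-u$ yields (\ref{zetat}).

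For part (1), I would trace $t^{M|N}_{ij}(u)$ through $\psi_{p|q}=\omega_{p+M|q+N}\circ\varphi_{p|q}\circ\omega_{M|N}$ in three steps. First, $\omega_{M|N}$ produces $(T^{M|N}(-u)^{-1})_{ij}$. Second, since $\varphi_{p|q}$ is an algebra homomorphism embedding the generating matrix $T^{M|N}(u)$ entry-wise as the bottom-right $(M+N)\times(M+N)$ submatrix $T_D(u)$ of $T^{p+M|q+N}(u)$, it sends $T^{M|N}(-u)^{-1}$ to $T_D(-u)^{-1}$. Third, applying $\omega_{p+M|q+N}$ entry-wise and using the identity $\omega_{p+M|q+N}(t_{k\ell}(-u))=t'_{k\ell}(u)$ together with the homomorphism property, the matrix $T_D(-u)$ is sent to the bottom-right block $P(u)$ of the \emph{full} inverse $T^{p+M|q+N}(u)^{-1}$, and hence $T_D(-u)^{-1}$ is sent to $P(u)^{-1}$. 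Finally, partitioning $T^{p+M|q+N}(u)$ with top-left $(p+q)\times(p+q)$ block $A(u)$ and bottom-right block $D_0(u)$, the Schur complement formula gives $P(u)=(D_0(u)-C(u)A(u)^{-1}B(u))^{-1}$, so $P(u)^{-1}$ has $(i,j)$-entry equal to $t_{p+q+i,p+q+j}(u)$ minus the bilinear expression that by definition is the quasideterminant (\ref{psit}).

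The main obstacle I expect is the tension in the third step between \emph{submatrix-then-invert} (what $\varphi_{p|q}$ produces) and \emph{invert-then-submatrix} (what $\omega_{p+M|q+N}$ naturally produces); these two operations do not coincide, and it is precisely the Schur complement identity that reconciles them and yields the quasideterminant expression on the right-hand side of (\ref{psit}). The remainder is careful bookkeeping using the involutivity of $\omega$ (to pass between $t$'s and $t'$'s) and the homomorphism property of each factor.
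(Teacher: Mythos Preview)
Your proposal is correct and follows essentially the same approach as the paper: for part (2) you unwind the composition $\rho_{M|N}\circ\omega_{M|N}$ exactly as the paper indicates (``immediate from the definition''), and for part (1) your three-step trace through $\omega_{p+M|q+N}\circ\varphi_{p|q}\circ\omega_{M|N}$ culminating in the Schur complement identity is precisely the argument of \cite[Lemma~4.2]{BK1}, which the paper cites verbatim. Your explicit identification of the ``submatrix-then-invert vs.\ invert-then-submatrix'' tension as the point where the quasideterminant appears is exactly the content of that lemma.
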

\begin{proof}
The first one is exactly the same with \cite[Lemma 4.2]{BK1}, while the second one is immediate from the definition.\end{proof}

Set $L=p+q$ for convenience. Note that (\ref{psit}) depends only on $L$ so we may simply write $\psi_{p|q}=\psi_L$ when appropriate. As a consequence of Proposition \ref{quasi}, we have
 \begin{eqnarray}\label{psid}
  D_{a;i,j}(u)&=&\psi_{\mu_1+\mu_2+\ldots +\mu_{a-1}}\big(D_{1;i,j}(u)\big),\\ \label{psie} 
  E_{a;i,j}(u)&=&\psi_{\mu_1+\mu_2+\ldots +\mu_{a-1}}\big(E_{1;i,j}(u)\big),\\ \label{psif}
  F_{a;i,j}(u)&=&\psi_{\mu_1+\mu_2+\ldots +\mu_{a-1}}\big(F_{1;i,j}(u)\big).
 \end{eqnarray}
In addition, the map $\psi_{L}$ sends $t_{i,j}^{\prime}(u)\in Y_{M|N}(\so)$ to $t_{L+i,L+j}^{\prime}(u)\in Y_{p+M|q+N}(\so_1\so)$, which implies that the image of $\psi_{L}\big(Y_{M|N}(\so)\big)$ in $Y_{p+M|q+N}(\so_1\so)$ is generated by the following elements 
\[
\{t_{L+i,L+j}^{\prime(r)}\in Y_{p+M|q+N}(\so_1\so)\,|\,1\leq i,j\leq M+N, r\geq 0\}.
\]

If we pick any element $t_{i,j}^{(r)}$ in the northwestern $L\times L$ corner of $T(u)$, an $(L+M+N)\times(L+M+N)$ matrix with entries in $Y_{p+M|q+N}[[u^{-1}]]$, then the indices will never overlap with those of $\psi_{L}\big(Y_{M|N}\big)$, that are in the southeastern $(M+N)\times(M+N)$ corner of the same $T(u)$. As a result of equation (\ref{usefull}), they supercommute. Clearly, the elements in the northwestern $L\times L$ corner of $T(u)$ (of $Y_{p+M|q+N}$) generate a subalgebra which is isomorphic to $Y_{p|q}(\so_1)$ by (\ref{RTT}), so we have obtained the following lemma. Roughly speaking, the map $\psi_{L}$ embeds the matrix $T(u)$ into the southeastern corner of a larger matrix $T(u)$.
\begin{lemma}\label{corcommute}
In $Y_{p+M|q+N}(\so_1\so)$, the subalgebras $Y_{p|q}(\so_1)$ and $\psi_{L}\big(Y_{M|N}(\so)\big)$ supercommute with each other.
\end{lemma}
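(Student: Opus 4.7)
The plan is to execute precisely the informal argument already sketched in the paragraph preceding the lemma statement. First I would pin down convenient generating sets for the two subalgebras inside $Y_{p+M|q+N}(\so_1\so)$. For $Y_{p|q}(\so_1)$, the subalgebra sits in the northwestern $L\times L$ block, generated by $\{t_{i,j}^{(r)}\mid 1\leq i,j\leq L,\, r\geq 1\}$; the fact that these satisfy the defining relations of $Y_{p|q}(\so_1)$ is immediate from (\ref{RTT}) restricted to indices in this range, since the restricted parities are read off from $\so_1$. For $\psi_L\bigl(Y_{M|N}(\so)\bigr)$, I would unwind the definition $\psi_L=\omega_{p+M|q+N}\circ\varphi_{p|q}\circ\omega_{M|N}$ to verify that $\psi_L\bigl(t'_{i,j}(u)\bigr)=t'_{L+i,L+j}(u)$ for all $1\leq i,j\leq M+N$, using that $\omega_{M|N}\bigl(t'_{i,j}(u)\bigr)=t_{i,j}(-u)$. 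Combined with the observation that $\omega_{M|N}$ is an automorphism sending the RTT generators of $Y_{M|N}(\so)$ to a generating family among the $t'_{i,j}^{(r)}$'s, this identifies $\psi_L\bigl(Y_{M|N}(\so)\bigr)$ as the subalgebra generated by $\{t'_{L+i,L+j}^{(r)}\mid 1\leq i,j\leq M+N,\, r\geq 1\}$, a set living entirely in the southeastern $(M+N)\times(M+N)$ block.

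Next I would apply Lemma (\ref{usefull}) to a generic pair $\bigl[t_{i,j}(u),\, t'_{h,k}(v)\bigr]$ with $1\leq i,j\leq L$ and $L+1\leq h,k\leq L+M+N$. Since the index ranges are disjoint, both $\delta_{h,j}$ and $\delta_{i,k}$ vanish, and so
\[
(u-v)\bigl[t_{i,j}(u),\, t'_{h,k}(v)\bigr]=0.
\]
The standard power series cancellation trick then forces every coefficient bracket $\bigl[t_{i,j}^{(r)},\, t'_{h,k}^{(s)}\bigr]$ to vanish: reading off the coefficient of $u^{-a}v^{-b}$ produces the recursion $\bigl[t_{i,j}^{(a+1)},\, t'_{h,k}^{(b)}\bigr]=\bigl[t_{i,j}^{(a)},\, t'_{h,k}^{(b+1)}\bigr]$, whose base case is provided by $t_{i,j}^{(0)}=\delta_{i,j}$ (so one end of the induction is zero). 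Hence the two generating sets supercommute term-by-term.

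Finally, I would invoke the general fact that if two families of homogeneous elements in a superalgebra pairwise supercommute, then the subalgebras they generate also supercommute (this is a routine induction on word length using the Leibniz-type identity for the supercommutator). Applied to our two generating families, this completes the proof.

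The argument has no real obstacle beyond bookkeeping; the one subtlety worth flagging is that $\psi_L$ must be understood on the $t'$-generators rather than on the RTT generators, because $\psi_L\bigl(t_{i,j}(u)\bigr)$ is the quasideterminant (\ref{psit}) which genuinely involves the northwestern corner and hence does \emph{not} supercommute with $Y_{p|q}(\so_1)$. Choosing the right generating set for $\psi_L\bigl(Y_{M|N}(\so)\bigr)$, namely $\{t'_{L+i,L+j}^{(r)}\}$, is what makes the disjointness of indices in (\ref{usefull}) available.
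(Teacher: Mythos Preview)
Your proposal is correct and follows essentially the same approach as the paper: identify $Y_{p|q}(\so_1)$ with the subalgebra generated by the northwestern $t_{i,j}^{(r)}$, identify $\psi_L\bigl(Y_{M|N}(\so)\bigr)$ with the subalgebra generated by the southeastern $t'_{L+i,L+j}^{(r)}$, and then invoke (\ref{usefull}) with disjoint index ranges so that both Kronecker deltas vanish. Your write-up is in fact more detailed than the paper's own discussion, and your closing remark about why one must use the $t'$-generators (rather than the quasideterminant expression (\ref{psit})) for the image of $\psi_L$ is a nice clarification that the paper leaves implicit.
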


Moreover, by equations (\ref{psit}), (\ref{psid}), (\ref{psie}) and (\ref{psif}), the parities of the parabolic generators can be easily obtained as follows:
\begin{eqnarray}
    \label{pad}\text{ parity of } D_{a;i,j}^{(r)}&=&\pa{i}_a+\pa{j}_a \,\,\text{(mod 2)},\\
    \label{pae}\text{ parity of } E_{b;h,k}^{(s)}&=&\pa{h}_{b}+\pa{k}_{b+1} \,\,\text{(mod 2)},\\
    \label{paf}\text{ parity of } F_{b;f,g}^{(s)}&=&\pa{f}_{b+1}+\pa{g}_{b} \,\,\text{(mod 2)},
\end{eqnarray}
for all $r,s\geq 1$, $1\leq a\leq n$, $1\leq b\leq n-1$, $1\leq i,j\leq \mu_a$, $1\leq h,g\leq \mu_b$, $1\leq k,f\leq \mu_{b+1}$.

Next we analyze $\zeta_{M|N}$. Associate to $\mu$, we may define the elements \{$D_{a;i,j}^{(r)};D^{\prime(r)}_{a;i,j}$\}, \{$E_{a;i,j}^{(r)}$\}, \{$F_{a;i,j}^{(r)}$\} in $Y_{M|N}=Y_\mu(\so)$ by Gauss decomposition. Consider
\[\overleftarrow{\mu}:=(\mu_n,\mu_{n-1},\ldots,\mu_{1}),\]
the reverse of $\mu$.
We may similarly define elements \{$\overleftarrow{D}_{a;i,j}^{(r)};\overleftarrow{D}^{\prime(r)}_{a;i,j}$\}, \{$\overleftarrow{E}_{a;i,j}^{(r)}$\}, \{$\overleftarrow{F}_{a;i,j}^{(r)}$\} in $Y_{N|M}=Y_{\overleftarrow{\mu}}(\sd)$ with respect to $\overleftarrow{\mu}$ in the same way. Their relations are described in the following proposition, which is a generalization of \cite[Proposition~1]{Go} and \cite[Proposition~4.4]{Pe1}, where the proof is very similar. Roughly speaking, the map $\zeta_{M|N}$ turns the matrix $T(u)$ up side down, so we take $\sd$ in the image space in order to keep the parities unchanged.

\begin{proposition}\label{zetadef} For all $1\leq i,j,h\leq \mu_a$, $1\leq k\leq \mu_{a+1}$, we have
\begin{eqnarray}
 \zeta_{M|N}\big(D_{a;i,j}(u)\big)&=&\overleftarrow{D}^{\prime}_{n+1-a;\mu_{a}+1-i,\mu_{a}+1-j}(u), \qquad\forall 1\leq a\leq n, \label{zd} \\
 \zeta_{M|N}\big(E_{a;h,k}(u)\big)&=&-\overleftarrow{F}_{n-a;\mu_{a}+1-h,\mu_{a+1}+1-k}(u), \quad\forall 1\leq a\leq n-1, \label{sze}\\
 \zeta_{M|N}\big(F_{a;k,h}(u)\big)&=&-\overleftarrow{E}_{n-a;\mu_{a+1}+1-k,\mu_{a}+1-h}(u), \quad\forall 1\leq a\leq n-1\label{szf}.
  \end{eqnarray}
\end{proposition}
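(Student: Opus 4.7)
The plan is to apply $\zeta_{M|N}$ to the Gauss decomposition $T(u)=F(u)D(u)E(u)$ and then identify the resulting factorization with another Gauss decomposition obtained by inverting the matrix equation in the target algebra $Y_{N|M}(\sd)$. Let $J$ denote the $(M+N)\times (M+N)$ anti-diagonal permutation matrix $J_{ij}=\delta_{i+j,M+N+1}$. The formula (\ref{zetat}) can be read as the matrix equation $\zeta_{M|N}(T(u))=J\,T(u)^{-1}\,J$, where on the right $T(u)=(t_{ij}(u))$ is viewed inside $Y_{N|M}(\sd)$ and the inverse is taken there.

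Performing the Gauss decomposition of $T(u)$ in $Y_{N|M}(\sd)$ with respect to the reversed composition $\overleftarrow{\mu}=(\mu_n,\ldots,\mu_1)$ gives $T(u)=\overleftarrow{F}(u)\overleftarrow{D}(u)\overleftarrow{E}(u)$, so that
$$J\,T(u)^{-1}\,J=\bigl(J\overleftarrow{E}(u)^{-1}J\bigr)\bigl(J\overleftarrow{D}(u)^{-1}J\bigr)\bigl(J\overleftarrow{F}(u)^{-1}J\bigr).$$
Conjugation by $J$ reverses both row and column orderings and therefore converts block upper (resp.\ lower) unitriangular matrices with respect to $\overleftarrow{\mu}$ into block lower (resp.\ upper) unitriangular matrices with respect to $\mu$, while interchanging the diagonal block at position $\alpha$ (w.r.t.\ $\overleftarrow{\mu}$) with the diagonal block at position $n+1-\alpha$ (w.r.t.\ $\mu$). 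Hence the right-hand side is a bona fide Gauss decomposition of $\zeta_{M|N}(T(u))$ with respect to $\mu$. But applying $\zeta_{M|N}$ entry-wise to $T(u)=F(u)D(u)E(u)$ produces another such decomposition, and by uniqueness of Gauss decomposition I can read off $\zeta_{M|N}(F(u))=J\overleftarrow{E}(u)^{-1}J$, $\zeta_{M|N}(D(u))=J\overleftarrow{D}(u)^{-1}J$, and $\zeta_{M|N}(E(u))=J\overleftarrow{F}(u)^{-1}J$.

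From these three matrix identities, (\ref{zd})--(\ref{szf}) follow by reading entries. The diagonal case (\ref{zd}) is immediate from how conjugation by $J$ acts on a single diagonal block. For (\ref{sze}) and (\ref{szf}), the only fact I need beyond index reversal is the computation of the immediate subdiagonal block of $\overleftarrow{F}(u)^{-1}$ and the immediate superdiagonal block of $\overleftarrow{E}(u)^{-1}$: writing $\overleftarrow{F}(u)=I+\overleftarrow{N}$ with $\overleftarrow{N}$ strictly block lower triangular, the expansion $(I+\overleftarrow{N})^{-1}=I-\overleftarrow{N}+\overleftarrow{N}^2-\cdots$ contributes to the $(\alpha+1,\alpha)$ block only through the leading $-\overleftarrow{N}$ term, since $\overleftarrow{N}^2$ is supported strictly two or more blocks below the diagonal. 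This is where the minus sign in (\ref{sze}) comes from, and (\ref{szf}) is analogous. The main obstacle is purely bookkeeping: one must carefully verify that conjugation by $J$ sends entry $(i',j')$ of the block at position $(\alpha,\beta)$ with respect to $\overleftarrow{\mu}$ to entry $(\mu_{n+1-\alpha}+1-i',\mu_{n+1-\beta}+1-j')$ of the block at position $(n+1-\alpha,n+1-\beta)$ with respect to $\mu$. Once this correspondence is fixed, matching indices in each of the three matrix identities gives precisely the stated formulas.
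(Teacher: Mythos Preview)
Your argument is correct and is essentially the approach the paper has in mind: the paper defers to \cite[Proposition~1]{Go} and \cite[Proposition~4.4]{Pe1}, whose proofs proceed exactly by applying $\zeta_{M|N}$ to the Gauss decomposition, recognizing the image as $J\,T(u)^{-1}J$ in the target, and invoking uniqueness of Gauss decomposition (the paper's remark that ``$\zeta_{M|N}$ turns the matrix $T(u)$ upside down'' is a shorthand for your conjugation by $J$). Your handling of the minus sign via the geometric series expansion of $(I+\overleftarrow{N})^{-1}$ on the first subdiagonal block is exactly right.
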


The following proposition follows directly from (\ref{RTT}), (\ref{psid}), and Lemma \ref{corcommute}. In consequence, the relations among the $D$'s are obtained. One should notice that these $D$'s could be even or odd according to (\ref{pad}). This is different with the cases in \cite{Go,Pe1}, in which they are always even.
\begin{proposition}\label{dd0}
The relations among the elements
$\{D_{a;i,j}^{(r)},D_{a;i,j}^{\prime (r)}\}$ for all $r\geq 0$, ${1\leq i,j\leq \mu_a}$, $1\leq a\leq n$
are given by
\[
D_{a;i,j}^{(0)}=\delta_{ij},\]
\[
 {\displaystyle \sum_{t=0}^{r}D_{a;i,p}^{(t)}D_{a;p,j}^{\prime (r-t)}=\delta_{r0}\delta_{ij} },
\]
\begin{multline*}
[D_{a;i,j}^{(r)},D_{b;h,k}^{(s)}]=
    \delta_{ab}(-1)^{\pa{i}_a\pa{j}_a+\pa{i}_a\pa{h}_a+\pa{j}_a\pa{h}_a}\times\\
    \sum_{t=0}^{min(r,s)-1}\big(D_{a;h,j}^{(t)}D_{a;i,k}^{(r+s-1-t)}
     -D_{a;h,j}^{(r+s-1-t)}D_{a;i,k}^{(t)}\big).
\end{multline*}
\end{proposition}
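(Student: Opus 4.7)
The plan is to verify each of the three assertions by reducing to the RTT relation (\ref{RTT}) at the bottom level. The first assertion $D_{a;i,j}^{(0)} = \delta_{ij}$ is immediate from the Gauss decomposition (\ref{T=FDE}): since $T(u) = I + O(u^{-1})$ and $E(u)$, $F(u)$ are unitriangular, the constant term of each diagonal block $D_a(u)$ must be $I_{\mu_a}$. The second assertion is just the coefficient-by-coefficient expansion of the defining matrix identity $D_a(u) D_a'(u) = I_{\mu_a}$.

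The substantial content is the supercommutation formula, which I would split into the cases $a = b$ and $a \ne b$. For $a = b = 1$, the identity $D_{1;i,j}^{(r)} = t_{i,j}^{(r)}$ from (\ref{Dtident}) together with $\pa{i}_1 = \pa{i}$ (from (\ref{respa})) reduces the claim to (\ref{RTT}) with indices restricted to $\{1, \ldots, \mu_1\}$; note that the right-hand side of (\ref{RTT}) manifestly stays inside that index range. For general diagonal $a = b$, apply (\ref{psid}) to write $D_{a;i,j}(u) = \psi_{L_a}\bigl(\tilde D_{1;i,j}(u)\bigr)$, where $L_a = \mu_1 + \cdots + \mu_{a-1}$ and $\tilde D_{1;i,j}$ denotes the analogous element inside the smaller super Yangian whose 01-sequence is the truncation $\so_a\so_{a+1}\cdots\so_n$. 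The first-block parity of this truncation is precisely $\pa{\cdot}_a$ of the original $\so$, again by (\ref{respa}), so the $a=1$ relation in the truncated Yangian pushes forward via the homomorphism $\psi_{L_a}$ to the desired identity in $Y_{M|N}$.

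For the off-diagonal case $a \ne b$, assume $a < b$ by symmetry and again pull both generators back along $\psi_{L_a}$. The pullback of $D_{a;i,j}(u)$ becomes the RTT element $t_{i,j}(u)$ in the northwest $\mu_a \times \mu_a$ corner of the matrix $T(u)$ in the truncated Yangian; this corner generates the subalgebra $Y_{p_a|q_a}(\so_a)$. The pullback of $D_{b;h,k}(u)$ equals $\tilde D_{b-a+1;h,k}(u)$, which by a further application of (\ref{psid}) with shift $\mu_a$ lies in the image of $\psi_{\mu_a}$ applied to a yet smaller super Yangian built on $\so_{a+1}\cdots\so_n$. Lemma \ref{corcommute} then asserts that these two subalgebras supercommute, and since $\psi_{L_a}$ is an algebra homomorphism, the original supercommutator $[D_{a;i,j}^{(r)}, D_{b;h,k}^{(s)}]$ vanishes, which matches the $\delta_{ab}$ on the right-hand side of the claim. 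The only delicate point --- and it really is just bookkeeping --- is verifying that the restricted parity $\pa{\cdot}_a$ in $\so$ agrees with the first-block parity of the truncated sequence $\so_a\so_{a+1}\cdots\so_n$; this is built into (\ref{respa}), and once that is checked every sign factor lines up automatically.
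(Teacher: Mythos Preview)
Your proposal is correct and follows exactly the approach the paper indicates: the paper's one-line proof cites precisely (\ref{RTT}), (\ref{psid}), and Lemma~\ref{corcommute}, and you have spelled out how these three ingredients combine. The only addition is your explicit treatment of the first two (trivial) assertions via the Gauss decomposition and the definition $D_a'(u) = D_a(u)^{-1}$, which the paper takes for granted.
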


It can be observed from the relations that the elements $\{D_{a;i,j}^{(r)},D_{a;i,j}^{\prime (r)}\}$ generate a subalgebra of $Y_{\mu}$, called the $Levi$ $subalgebra$ of $Y_{\mu}$, and denote it by $Y^0_{\mu}(\so)$. 
By Lemma \ref{corcommute}, we have  
\begin{eqnarray*}
Y^0_{\mu}(\so)&=&Y_{\mu_1}(\so_1)\psi_{\mu_1}(Y_{\mu_2}(\so_2))\psi_{\mu_1+\mu_2}(Y_{\mu_3}(\so_3))\cdots\psi_{\mu_1+\cdots+\mu_{n-1}}(Y_{\mu_n}(\so_n))\\
&\cong &Y_{\mu_1}(\so_1)\otimes Y_{\mu_2}(\so_2)\otimes\cdots \otimes Y_{\mu_n}(\so_n),
\end{eqnarray*}
where $\mu=(\mu_1,\mu_2,\ldots ,\mu_n)$ and $\so=\so_1\so_2\cdots\so_n$.
Note that in the special case when all $\mu_i=1$, the subalgebra $Y^0_{(1,\ldots,1)}(\so)$ is purely even and commutative. One may think $Y^0_{(1,\ldots,1)}(\so)$ in $Y_{M|N}$ as an analogue of the Cartan subalgebra  consisting of all diagonal matrices in $\gl_{M|N}$.

\section{Special Case: $n=2$}
In this section, we focus on the very first non-trivial case under our consideration; that is, $\mu=(\mu_1,\mu_2)$ with a fixed 01-sequence $\so=\so_1\so_2$. 
We list our parabolic generators as follow:
\begin{align*}
&\big\lbrace D_{a;i,j}^{(r)}, D_{a;i,j}^{\prime(r)} \,|\, {a=1,2;\; 1\leq i,j\leq \mu_a;\; r\geq 0}\big\rbrace,\\
&\big\lbrace E_{1;i,j}^{(r)} \,|\, 1\leq i\leq \mu_1, 1\leq j\leq\mu_{2};\; r\geq 1\big\rbrace,\\
&\big\lbrace F_{1;i,j}^{(r)} \,|\, 1\leq i\leq\mu_{2}, 1\leq j\leq \mu_1;\; r\geq 1\big\rbrace.
\end{align*}

The following proposition gives explicitly the relations among the generators other than those relations already obtained in Proposition \ref{dd0}.
{\allowdisplaybreaks
\begin{proposition}\label{n=2}
Let $\mu=(\mu_1,\mu_2)$ be a composition of $M+N$. The following identities hold in
$Y_{\mu}((u^{-1},v^{-1}))$:
\begin{align}
 (u-v)[D_{1;i,j}(u), E_{1;h,k}(v)]\label{p511}
        &=(-1)^{\pa{h}_1\pa{j}_1}\delta_{hj}\sum_{p=1}^{\mu_1}D_{1;i,p}(u)\big(E_{1;p,k}(v)-E_{1;p,k}(u)\big),\\[2mm]
 (u-v)[E_{1;i,j}(u), D^{\prime}_{2;h,k}(v)]
        &=(-1)^{\pa{h}_2\pa{j}_2}\del_{hj} \sum_{q=1}^{\mu_2}\big(E_{1;i,q}(u)-E_{1;i,q}(v)\big)D^{\prime}_{2;q,k}(v),\label{p512}\\[2mm]        
 (u-v)[D_{2;i,j}(u), E_{1;h,k}(v)]
        &\notag=(-1)^{\pa{h}_1\pa{k}_2+\pa{h}_1\pa{j}_2+\pa{j}_2\pa{k}_2}\times\\
        &\qquad\qquad\qquad D_{2;i,k}(u)\big(E_{1;h,j}(u)-E_{1;h,j}(v)\big),\label{p513}\\[2mm]              
 (u-v)[D_{1;i,j}(u), F_{1;h,k}(v)]
        &\notag=(-1)^{\pa{i}_1\pa{j}_1+\pa{h}_2\pa{i}_1+\pa{h}_2\pa{j}_1}\delta_{ik}\times\\
        &\qquad\qquad\qquad \sum_{p=1}^{\mu_1}\big(F_{1;h,p}(u)-F_{1;h,p}(v)\big)D_{1;p,j}(u),\label{p514}\\[2mm]
(u-v)[F_{1;i,j}(u), D^{\prime}_{2;h,k}(v)]
        &\notag=(-1)^{\pa{h}_2\pa{i}_2+\pa{h}_2\pa{j}_1+\pa{j}_1\pa{k}_2}\del_{ik}\times\\
        &\qquad\qquad\qquad \sum_{q=1}^{\mu_2}D^{\prime}_{2;h,q}(v)\big(F_{1;q,j}(v)-F_{1;q,j}(u)\big),\label{p515}\\[2mm]          
(u-v)[D_{2;i,j}(u), F_{1;h,k}(v)]
        &\notag=(-1)^{\pa{h}_2\pa{k}_1+\pa{h}_2\pa{j}_2+\pa{j}_2\pa{k}_1}\times\\
        &\qquad\qquad\qquad \big(F_{1;i,k}(v)-F_{1;i,k}(u)\big)D_{2;h,j}(u),\label{p516}\\[2mm]       
(u-v)[E_{1;i,j}(u), F_{1;h,k}(v)]
        &\notag=(-1)^{\pa{h}_2\pa{i}_1+\pa{i}_1\pa{j}_2+\pa{h}_2\pa{j}_2}D_{2;h,j}(u) D^{\prime}_{1;i,k}(u)\\
        &\qquad -(-1)^{\pa{h}_2\pa{k}_1+\pa{j}_2\pa{k}_1+\pa{h}_2\pa{j}_2}
        D^{\prime}_{1;i,k}(v) D_{2;h,j}(v),\label{p517}\\[2mm]
(u-v)[E_{1;i,j}(u), E_{1;h,k}(v)]
        &\notag=(-1)^{\pa{h}_1\pa{j}_2+\pa{j}_2\pa{k}_2+\pa{h}_1\pa{k}_2}\times\\
        &\qquad \big(E_{1;i,k}(u)-E_{1;i,k}(v)\big)\big(E_{1;h,j}(u)-E_{1;h,j}(v)\big),\label{p518}\\[2mm]
(u-v)[F_{1;i,j}(u), F_{1;h,k}(v)]
        &\notag=(-1)^{\pa{i}_2\pa{j}_1+\pa{h}_2\pa{i}_2+\pa{h}_2\pa{j}_1}\times\\
        &\qquad \big(F_{1;h,j}(v)-F_{1;h,j}(u)\big)\big(F_{1;i,k}(u)-F_{1;i,k}(v)\big).\label{p519} 
        \end{align}
The identities hold for all $1\leq i,j\leq \mu_1$ if $D_{1;i.j}(u)$ appears on the left-hand side of the equation, 
for all $1\leq h,k\leq \mu_2$ if $D_{2;h,k}^\prime(u)$ appears on the left-hand side of the equation, 
for all $1\leq i^\prime \leq \mu_1$, $1\leq j^\prime \leq \mu_2$ if $E_{1;i^\prime,j^\prime}(u)$ appears on the left-hand side of the equation, and
for all $1\leq h^\prime\leq \mu_2$, $1\leq k^\prime\leq \mu_1$ if $F_{1;h^\prime,k^\prime}(u)$ appears on the left-hand side of the equation.         
\end{proposition}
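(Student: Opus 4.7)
The plan is to reduce every identity in the proposition to the super RTT relation (\ref{RTT}), or equivalently to its dual (\ref{usefull}), by writing each parabolic generator in terms of the original generators $t_{i,j}(u)$ and $t'_{i,j}(u)$ via the Gauss decomposition and its inverse. For $n=2$ the decomposition collapses considerably: from $T(u)=F(u)D(u)E(u)$ one reads off $D_{1;i,j}(u)=t_{i,j}(u)$ (this is just (\ref{Dtident})), together with
\[
E_{1;h,k}(u)=\sum_{p=1}^{\mu_1}D'_{1;h,p}(u)\,t_{p,\mu_1+k}(u),\qquad F_{1;h,k}(u)=\sum_{p=1}^{\mu_1}t_{\mu_1+h,p}(u)\,D'_{1;p,k}(u),
\]
and inverting as $T(u)^{-1}=E(u)^{-1}D(u)^{-1}F(u)^{-1}$ gives $D'_{2;i,j}(u)=t'_{\mu_1+i,\mu_1+j}(u)$ along with analogous simple expressions for the off-diagonal blocks of $T(u)^{-1}$ in terms of $E$, $F$ and $D'$.

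With these substitutions in hand, I would verify each of (\ref{p511})--(\ref{p519}) by direct computation: substitute the appropriate Gauss formula, apply (\ref{RTT}) or (\ref{usefull}) to the resulting bracket of $t$'s (or of $t$'s and $t'$'s), and re-express the output back in the parabolic generators. A substantial labour-saving step is Proposition \ref{zetadef}: the isomorphism $\zeta_{M|N}$ sends $D \leftrightarrow D'$ and interchanges $E$-type generators with $F$-type generators (up to a sign and a reversal of indices). Applying $\zeta_{M|N}$ to the $DE$-identities (\ref{p511}), (\ref{p512}), (\ref{p513}), (\ref{p518}) therefore converts each into the corresponding $D'F$-identity (\ref{p514}), (\ref{p515}), (\ref{p516}), (\ref{p519}), cutting the number of independent calculations roughly in half. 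The mixed relation (\ref{p517}), which contains both $E$ and $F$, must be treated separately; this is the analogue of \cite[Lemma~5.3]{BK1} and uses the fact that the left side of (\ref{usefull}) already produces both a ``$t\cdot t'$'' and a ``$t'\cdot t$'' term, which after Gauss substitution assemble into $D_2 D'_1$ and $D'_1 D_2$.

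The main obstacle will be the bookkeeping of signs coming from the $\mathbb{Z}_2$-grading. In \cite{BK1} there are no signs at all, and in \cite{Go, Pe1} the composition $\mu$ is chosen so that each block has a uniform parity, so signs appear only at a handful of predictable places. Under the present convention of arbitrary $\mathfrak{s}$, an even index and an odd index may coexist in the same block, so a sign factor is produced at essentially every step, and every summation index over a block carries a parity that must be tracked. Concretely, the parity assignments (\ref{pad})--(\ref{paf}) have to be combined with the sign in (\ref{usefull}) and with the extra signs produced when moving a $D'$ past a $t$ (itself carrying an implicit parity). I expect the principal technical difficulty to be verifying, term by term, that the compound sign produced by this substitution-and-reassembly procedure exactly matches the prefactor written on the right-hand side of each of (\ref{p511})--(\ref{p519}).
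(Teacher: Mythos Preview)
Your plan matches the paper's approach: express the parabolic generators via the block Gauss decomposition of $T(u)$ and $T(u)^{-1}$, compute each bracket using (\ref{RTT}) or (\ref{usefull}), and use $\zeta_{M|N}$ to transport the $E$-relations (\ref{p511})--(\ref{p513}), (\ref{p518}) to the $F$-relations (\ref{p514})--(\ref{p516}), (\ref{p519}). Two points deserve a warning, since your phrase ``direct computation'' may understate them.

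First, for (\ref{p517}) the paper needs an extra manoeuvre that does not occur in \cite{BK1,Go,Pe1}: after substitution one obtains an identity summed over dummy indices $\alpha,\beta$ whose parities appear in the sign prefactors, so one cannot simply cancel $D_1(u)$ or $D_2(v)$ from the left. The trick is to multiply through by a well-chosen sign such as $(-1)^{|i|_1|h|_2}$ so that the surviving signs no longer involve the index to be cancelled; only then can one strip off the $D$-matrix. You should anticipate having to do this more than once.

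Second, your recipe does not quite close for (\ref{p518}). Starting from $[t_{i,\mu_1+j}(u),t'_{h,\mu_1+k}(v)]=0$ and substituting yields an identity with $(u-v)^2$ on the left, not $(u-v)$; naively dividing by $(u-v)$ is not justified because the right-hand side is not manifestly divisible. The paper handles this by an induction on the total degree $d$ in $u^{-1},v^{-1}$: one proves a degree-$d$ version of (\ref{p518}), feeds it back into the $(u-v)^2$ identity at degree $d$, and extracts the degree-$(d{+}1)$ statement. This step is where the argument genuinely goes beyond ``substitute and simplify'', so build it into your plan.
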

}
\begin{proof}
Our approach is similar to those in \cite[Section 6]{BK1} and \cite[Section 5]{Pe1}.
We compute the matrix products (\ref{T=FDE}) and (\ref{Tp=FDE})
with respect to the composition $\mu=(\mu_1,\mu_2)$ and get the following identities.
\begin{eqnarray}
\label{511}t_{i,j}(u)&=&D_{1;i,j}(u), \qquad\qquad\qquad\quad\;\forall 1\le i,j\le \mu_1,\\
\label{512}t_{i,\mu_1+j}(u)&=&D_{1;i,p}E_{1;p,j}(u),\qquad\qquad\quad\;\;\forall 1\le i\le \mu_1, 1\le j\le \mu_2,\\
\label{513}t_{\mu_1+i,j}(u)&=&F_{1;i,p}(u)D_{1;p,j}(u),\qquad\quad\quad\;\forall 1\le i\le \mu_2, 1\le j\le \mu_1,\\
\label{514}t_{\mu_1+i,\mu_1+j}(u)&=&F_{1;i,p}(u)D_{1;p,q}(u)E_{1;q,j}(u)+D_{2;i,j}(u),\,\,\,\forall 1\le i,j\le \mu_2,\\
\label{515}t^{\prime}_{i,j}(u)&=&D^{\prime}_{1;i,j}(u)+E_{1;i,p'}(u)D^{\prime}_{2;p',q'}(u)F_{1;q',j}(u),\forall 1\le i,j\le \mu_1,\\
\label{516}t^{\prime}_{i,\mu_1+j}(u)&=&-E_{1;i,p'}(u)D^{\prime}_{2;p',j}(u),\qquad\quad\forall 1\le i\le \mu_1, 1\le j\le \mu_2,\\
\label{517}t^{\prime}_{\mu_1+i,j}(u)&=&-D^{\prime}_{2;i,p'}(u)F_{1;p',j}(u),\qquad\quad\,\forall 1\le i\le \mu_2, 1\le j\le \mu_1,\\
\label{518}t^{\prime}_{\mu_1+i,\mu_1+j}(u)&=&D^{\prime}_{2;i,j}(u),\qquad\qquad\qquad\quad\;\,\forall 1\le i,j\le \mu_2,
\end{eqnarray}
where the indices $p,q$ (respectively, $p',q'$) are summed over $1,\ldots,\mu_1$ (respectively, $1,\ldots,\mu_2$).

(\ref{p511})--(\ref{p513}) can be proved similar to \cite[Lemma~6.3]{BK1} and \cite[Proposition~5.1]{Pe1}, except for some issues about the sign factors that have to be carefully treated. (\ref{p514})--(\ref{p516}) and (\ref{p519}) follow from applying the map $\zeta_{M|N}$ to (\ref{p511})--(\ref{p513}) and (\ref{p518}) with suitable choices of indices.
We prove (\ref{p517}) and (\ref{p518}) in detail here as illustrating examples about some new phenomenons and how we deal with them.

To show (\ref{p517}), we need other identities. Computing the brackets in (\ref{p512}) and (\ref{p514}) by definition, we have
\begin{multline}\label{2ef1}
(u-v)E_{1;\alpha ,j}(u)D^{\prime}_{2;h,\beta}(v)
  =(-1)^{\pa{h}_2\pa{j}_2}\delta_{hj}\big(E_{1;\alpha ,q}(u)-E_{1;\alpha ,q}(v)\big)D^{\prime}_{2;q,\beta}(v)\\
+(-1)^{(\pa{\alpha}_1+\pa{j}_2)(\pa{h}_2+\pa{\beta}_2)}(u-v)D^{\prime}_{2;h,\beta}(v)E_{1;\alpha ,j}(u),
\end{multline}
\begin{multline}\label{2ef2}
(u-v)F_{1;\beta ,k}(v)D_{1;i,\alpha}(u)=
(-1)^{(\pa{i}_1+\pa{\alpha}_1)(\pa{\beta}_1+\pa{k}_1)}(u-v)D_{1;i,\alpha}(u)F_{1;\beta ,k}(v)\\
-(-1)^{\pa{i}_1\pa{k}_1}\delta_{ki}\big(F_{1;\beta ,p}(u)-F_{1;\beta ,p}(v)\big)D_{1;p,\alpha}(u),
\end{multline}
where $\alpha$, $p$ (respectively, $\beta$, $q$) are summed over $1,\ldots,\mu_1$ (respectively, $1,\ldots, \mu_2$).

By (\ref{usefull}), we have
\begin{multline*}
(u-v)[t_{i,\mu_1+j}(u),t^{\prime}_{\mu_1+h,k}(v)]=(-1)^{\pa{i}_1\pa{j}_2+\pa{i}_1\pa{h}_2+\pa{j}_2\pa{h}_2}\times \\
\big(\delta_{hj}\sum_{g=1}^{M+N}t_{ig}(u)t^{\prime}_{gk}(v)
-\delta_{ki}\sum_{s=1}^{M+N}t^{\prime}_{\mu_1+h,s}(v)t_{s,\mu_1+j}(u)\big).
\end{multline*}
Substituting by (\ref{511})$-$(\ref{518}), we may rewrite the above identity as the following
\begin{align}\notag
&D_{1;i,\alpha}(u)(u-v)E_{1;\alpha ,j}(u)D^{\prime}_{2;h,\beta}(v)F_{1;\beta,k}(v)+
(-1)^{\pa{j}_2\pa{h}_2}\delta_{hj}D_{1;i,\alpha}(u)D^{\prime}_{1;\alpha ,k}(v)\\\notag
&+(-1)^{\pa{j}_2\pa{h}_2}\delta_{hj}D_{1;i,\alpha}(u)\big(E_{1;\alpha ,q}(v)-E_{1;\alpha ,q}(u)\big)D^{\prime}_{2;q,\beta}(v) F_{\beta ,k}(v)\\\notag
&=(-1)^{(\pa{i}_1+\pa{j}_2)(\pa{h}_2+\pa{k}_1)}(u-v)D^{\prime}_{2;h,\beta}(v) F_{1;\beta ,k}(v)D_{1;i,\alpha}(u)E_{1;\alpha,j}(u)\\\notag
&+(-1)^{(\pa{i}_1+\pa{j}_2)(\pa{h}_2+\pa{k}_1)}\delta_{ki}D_{2;h,\beta}^\prime(v)\big(F_{1;\beta ,p}(u)-F_{1;\beta ,p}(v)\big)D_{1;p,\alpha}(u) E_{1;\alpha ,j}(u)\\ \label{2ef3}
&+(-1)^{(\pa{i}_1+\pa{j}_2)(\pa{h}_2+\pa{k}_1)}\delta_{ki}D^{\prime}_{2;h,\beta}(v)D_{2;\beta ,j}(u),
\end{align}
where $\alpha$, $p$ (respectively, $\beta$, $q$) are summed over $1,\ldots,\mu_1$(resp. $1,\ldots,\mu_2$). Substituting (\ref{2ef1}) and (\ref{2ef2}) into (\ref{2ef3}) and simplifying the result, we obtain
\begin{multline*}
D_{1;i,\alpha}(u)D^\prime_{2;h\beta}(v)D_{2;\beta,j}(v)D^{\prime}_{1;\alpha ,k}(v)\\
+(-1)^{\pa{\alpha}_1\pa{h}_2+\pa{\alpha}_1\pa{k}_1+\pa{j}_2\pa{\beta}_2}(u-v)D_{1;i,\alpha}(u) D^{\prime}_{2;h,\beta}(v)E_{1;\alpha ,j}(u)F_{1;\beta ,k}(v)\\
=
(-1)^{\pa{\alpha}_1\pa{h}_2+\pa{\alpha}_1\pa{k}_1+\pa{j}_2\pa{k}_1}(u-v)D_{1;i,\alpha}(u) D^{\prime}_{2;h,\beta}(v) F_{1;\beta ,k}(v)E_{1;\alpha ,j}(u)\\
+(-1)^{\pa{k}_1\pa{j}_2+\pa{h}_2\pa{\alpha}_1+\pa{\beta}_2\pa{\alpha}_1+\pa{\beta}_2\pa{k}_1}D_{1;i,\alpha}(u) D^{\prime}_{2;h,\beta}(v) D_{1;\alpha,k}^\prime(u) D_{2;\beta ,j}(u).
\end{multline*}
Note that in the above equality, the index $i$ is not involved in those sign factors. We may multiply the matrix $D_1^\prime(u)$ from the left to both sides of the equality above so that we have:

\begin{multline*}
D^\prime_{2;h\beta}(v)D_{2;\beta,j}(v)D^{\prime}_{1;i ,k}(v)\\
+(-1)^{\pa{i}_1\pa{h}_2+\pa{i}_1\pa{k}_1+\pa{j}_2\pa{\beta}_2}(u-v)D^{\prime}_{2;h,\beta}(v)E_{1;i ,j}(u)F_{1;\beta ,k}(v)\\
=
(-1)^{\pa{i}_1\pa{h}_2+\pa{i}_1\pa{k}_1+\pa{j}_2\pa{k}_1}(u-v) D^{\prime}_{2;h,\beta}(v) F_{1;\beta ,k}(v)E_{1;i ,j}(u)\\
+(-1)^{\pa{k}_1\pa{j}_2+\pa{h}_2\pa{i}_1+\pa{\beta}_2\pa{i}_1+\pa{\beta}_2\pa{k}_1}D^{\prime}_{2;h,\beta}(v) D_{1;i,k}^\prime(u) D_{2;\beta ,j}(u).
\end{multline*}
Similar to the above computation, we want to multiply $D_2(v)$ from the left to the above identity. However, we can {\em not} do this directly since the index $h$ is involved in some sign factors; such a phenomenon didn't appear in \cite{BK1, Go, Pe1}.
It turns out that we may multiply a suitable sign factor $(-1)^{\pa{i}_1\pa{h}_2}$ to the above identity so that
\begin{multline*}
(-1)^{\pa{i}_1\pa{j}_2}D^\prime_{2;h,\beta}(v)D_{2;\beta,j}(v)D^{\prime}_{1;i ,k}(v)\\
+(-1)^{\pa{i}_1\pa{k}_1+\pa{j}_2\pa{\beta}_2}(u-v)D^{\prime}_{2;h,\beta}(v)E_{1;i ,j}(u)F_{1;\beta ,k}(v)\\
=
(-1)^{\pa{i}_1\pa{k}_1+\pa{j}_2\pa{k}_1}(u-v) D^{\prime}_{2;h,\beta}(v) F_{1;\beta ,k}(v)E_{1;i ,j}(u)\\
+(-1)^{\pa{k}_1\pa{j}_2+\pa{\beta}_2\pa{i}_1+\pa{\beta}_2\pa{k}_1}D^{\prime}_{2;h,\beta}(v) D_{1;i,k}^\prime(u) D_{2;\beta ,j}(u).
\end{multline*}
Observe that in the very first term we have $D^\prime_{2;h\beta}(v)D_{2;\beta,j}(v)$, which is $\delta_{hj}$, so we may replace $\pa{h}_2$ by $\pa{j}_2$ in the sign factor. Now those sign factors in the above result are free from the index $h$ so we may multiply $D_2(v)$ from the left to obtain
\begin{multline*}
(-1)^{\pa{i}_1\pa{j}_2}D_{2;h,j}(v)D^{\prime}_{1;i ,k}(v)
+(-1)^{\pa{i}_1\pa{h}_1+\pa{j}_2\pa{h}_2}(u-v)E_{1;i ,j}(u)F_{1;h ,k}(v)\\
=
(-1)^{\pa{i}_1\pa{k}_1+\pa{j}_2\pa{k}_1}(u-v)  F_{1;h ,k}(v)E_{1;i ,j}(u)\\
+(-1)^{\pa{k}_1\pa{j}_2+\pa{h}_2\pa{i}_1+\pa{h}_2\pa{k}_1} D_{1;i,k}^\prime(u) D_{2;h ,j}(u).
\end{multline*}
Collecting the corresponding terms and using the fact that $D_{2;h,j}(u)$, $D_{1;i,k}^\prime(u)$ supercommute, we derive (\ref{p517}).
Such a technique appears very often in the remaining part of this article.

For (\ref{p518}), we start with $[t_{i,\mu_1+j}(u), t^{\prime}_{h,\mu_1+k}(v)]=0.$
Multiplying $(u-v)^2$ and computing the bracket after substituting by (\ref{512}) and (\ref{516}), we have
\begin{align}\notag
&(u-v)^2D_{1;i,p}(u)E_{1;p,j}(u)E_{1;h,q}(v)D^{\prime}_{2;q,k}(v)\qquad\qquad\qquad\qquad\qquad\qquad\qquad\\
&-(-1)^{\pa{p}_1\pa{h}_1+\pa{p}_1\pa{q}_2+\pa{j}_2\pa{q}_2}(u-v)E_{1;h,q}(v)D_{1;i,p}(u)(u-v)D^{\prime}_{2;q,k}(v)E_{1;p,j}(u)=0,\label{2ee1}
\end{align}
where the indices $p$ and $q$ are summed from 1 to $\mu_1$ and $\mu_2$, respectively.
Computing the brackets in (5.1) and (\ref{p512}), we obtain the following two identities
\begin{multline*}
(u-v)(-1)^{(\pa{i}_1+\pa{p}_1)(\pa{h}_1+\pa{q}_2)}E_{1;h,q}(v)D_{1;i,p}(u)\\
=(u-v)D_{1;i,p}(u)E_{1;h,q}(v)
-\delta_{hp}(-1)^{\pa{h}_1\pa{p}_1}D_{1;i,g_1}(u)\big(E_{1;g_1,q}(v)-E_{1;g_1,q}(u)\big),
\end{multline*}
\begin{multline*}
(u-v)(-1)^{(\pa{p}_1+\pa{j}_2)(\pa{q}_2+\pa{k}_2)}D^{\prime}_{2;q,k}(v)E_{1;p,j}(u)\\
=(u-v)E_{1;p,j}(u)D^{\prime}_{2;q,k}(v)
+ \delta_{jq}(-1)^{\pa{q}_2\pa{j}_2}\big(E_{1;p,g_2}(u)-E_{1;p,g_2}(v)\big)D^{\prime}_{2;g_2,k}(v),
\end{multline*}
where the indices $p, g_1$ (respectively, $q, g_2$) are summed from 1 to $\mu_1$ (respectively, $\mu_2$).
Substituting these two into the second term of (\ref{2ee1}), multiplying some suitable choices of sign factors as in the proof of (\ref{p517}) so that we may multiply $D_1(u)$ from the left and $D_2(v)$ from the right simultaneously, we derive that
\begin{multline}
(u-v)^2[E_{1;i,j}(u),E_{1;h,k}(v)]=\\
(-1)^{\pa{i}_1\pa{j}_2+\pa{i}_1\pa{h}_1+\pa{j}_2\pa{h}_1}(u-v)E_{1;h,j}(v)\big(E_{1;i,k}(v)-E_{1;i,k}(u)\big)\\
+(-1)^{\pa{j}_2\pa{h}_1+\pa{j}_2\pa{k}_2+\pa{h}_1\pa{k}_2}(u-v)\big(E_{1;i,k}(u)-E_{1;i,k}(v)\big)E_{;1h,j}(u)\\
+\big(E_{1;i,j}(v)-E_{1;i,j}(u)\big)\big(E_{1;h,k}(u)-E_{1;h,k}(v)\big).\label{2ee2}
\end{multline}
For a power series $P$ in $Y_{\mu}[[u^{-1}, v^{-1}]]$, we write $\big\lbrace P\big\rbrace_{d}$ for the homogeneous component of $P$ of total degree $d$ in the variables $u^{-1}$ and $v^{-1}$. Then (\ref{p518}) is a consequence of the following claim.

{\bf{Claim:}} For $d\ge 1$, we have
\begin{multline*}
(u-v)\big\lbrace [E_{1;i,j}(u), E_{1;h,k}(v)]\big\rbrace_{d+1}=\\
\big\lbrace (-1)^{\pa{j}_2\pa{h}_1+\pa{j}_2\pa{k}_2+\pa{h}_1\pa{k}_2}\big(E_{1;i,k}(u)-E_{1;i,k}(v)\big)\big(E_{1;h,j}(u)-E_{1;h,j}(v)\big)\big\rbrace_d
\end{multline*}
We prove the claim by induction on $d$.
For $d=1$, take $\big\lbrace\:\big\rbrace_0$ on (\ref{2ee2}) so that 
\[
(u-v)^2\big\lbrace [E_{1;i,j}(u),E_{1;h,k}(v)]\big\rbrace_2=0.
\]
Note that the right-hand side of (\ref{2ee2}) is zero when $u=v$, hence we may divide both sides by $(u-v)$ and therefore $(u-v)\big\lbrace [E_{1;i,j}(u),E_{1;h,k}(v)]\big\rbrace_2=0$, as desired.
\\
Assuming that the claim is true for some $d>1$, so we have
\begin{multline}
(-1)^{\pa{k}_2\pa{j}_2+\pa{k}_2\pa{h}_1+\pa{j}_2\pa{h}_1+1}\big\lbrace[E_{1;i,k}(u), E_{1;h,j}(v)]\big\rbrace_{d+1}=\\
\Big\lbrace \frac{\big(E_{1;i,j}(v)-E_{1;i,j}(u)\big)\big(E_{1;h,k}(u)-E_{1;h,k}(v)\big)}{u-v}\Big\rbrace_d\label{2ee3}
\end{multline}
Note that the right-hand side of (\ref{2ee3}) is zero when $u=v$, which implies
\begin{equation}
E_{1;i,k}(v)E_{1;h,j}(v)=(-1)^{\pa{i}_1\pa{h}_1+\pa{j}_2\pa{h}_1+\pa{i}_1\pa{j}_2+1}E_{1;h,j}(v)E_{1;i,k}(v).\label{2ee4}
\end{equation}
Take $\big\lbrace\:\big\rbrace_d$ on (\ref{2ee2}) and replace the last term by (\ref{2ee3}):
\begin{align*}
(&u-v)^2\big\lbrace [E_{1;i,j}(u), E_{1;h,k}(v)]\big\rbrace_{d+2}\\
&=(u-v)(-1)^{\pa{i}_1\pa{j}_2+\pa{i}_1\pa{h}_1+\pa{j}_2\pa{h}_1}\big\lbrace E_{1;h,j}(v)\big(E_{1;i,k}(v)-E_{1;i,k}(u)\big)\big\rbrace_{d+1}\\
& +(u-v)(-1)^{\pa{j}_2\pa{h}_1+\pa{j}_2\pa{k}_2+\pa{h}_1\pa{k}_2}\big\lbrace \big(E_{1;i,k}(u)-E_{1;i,k}(v)\big)E_{1;h,j}(u)\big\rbrace_{d+1}\\
& +(u-v)\big\lbrace (-1)^{\pa{k}_2\pa{j}_2+\pa{k}_2\pa{h}_1+\pa{j}_2\pa{h}_1}E_{1;i,k}(u)E_{1;h,j}(v)-\\
&\qquad\qquad\qquad\qquad\qquad\qquad 
(-1)^{\pa{i}_1\pa{h}_1+\pa{j}_2\pa{h}_1+\pa{i}_1\pa{j}_2+1}E_{1;h,j}(v)E_{1;i,k}(u)\big\rbrace_d\\
&=(u-v)\big\lbrace (-1)^{\pa{i}_1\pa{j}_2+\pa{i}_1\pa{h}_1+\pa{j}_2\pa{h}_1}E_{1;h,j}(v) E_{1;i,k}(v)\big\rbrace_{d+1}\\
& +(u-v)(-1)^{\pa{j}_2\pa{h}_1+\pa{j}_2\pa{k}_2+\pa{h}_1\pa{k}_2}\big\lbrace E_{1;i,k}(u)E_{1;h,j}(u)-E_{1;i,k}(v)E_{1;h,j}(u) \big\rbrace_{d+1}\\
& +(u-v)\big\lbrace (-1)^{\pa{k}_2\pa{j}_2+\pa{k}_2\pa{h}_1+\pa{j}_2\pa{h}_1}E_{1;i,k}(u)E_{1;h,j}(v)\big\rbrace_d
\end{align*}
Substituting the term $(-1)^{\pa{i}_1\pa{j}_2+\pa{i}_1\pa{h}_1+\pa{j}_2\pa{h}_1}E_{1;h,j}(v) E_{1;i,k}(v)$ by (\ref{2ee4}) and simplifying the result, we have
\begin{align*}
&(u-v)^2\big\lbrace [E_{1;i,j}(u), E_{1;h,k}(v)]\big\rbrace_{d+2}=\\
&(u-v)\big\lbrace (-1)^{\pa{j}_2\pa{h}_1+\pa{j}_2\pa{k}_2+\pa{h}_1\pa{k}_2} \big(E_{1;i,k}(u)-E_{1;i,k}(v)\big)\big(E_{1;h,j}(u)-E_{1;h,j}(v)\big)\big\rbrace_{d+1}.
\end{align*}
Dividing both sides by $u-v$ establishes the claim.
\end{proof}

\section{Special Cases: $n=3$ and the super Serre relations}
In this section, we will consider the generators $D$'s, $E$'s and $F$'s in different super Yangians at the same time but using the same notation. It should be clear from the context which super Yangian we are dealing with. 

Similar to the proof of Proposition \ref{n=2}, we compute the matrix products
(\ref{T=FDE}) and (\ref{Tp=FDE}) with respect to the composition $\mu=(\mu_1,\mu_2,\mu_3)$ and derive the following identities.
\begin{eqnarray}
\label{610}t_{i,j}(u)&=&D_{1;i,j},\\
\label{611}t_{i,\mu_1+j}(u)&=&\sum_{p=1}^{\mu_1}D_{1;i,p}E_{1;p,j}(u),\\
\label{612}t_{i,\mu_1+\mu_2+j}(u)&=&\sum_{p=1}^{\mu_1}D_{1;i,p}E_{1,3;p,j}(u),\\
\label{613}t^{\prime}_{i,\mu_1+\mu_2+j}(u)&=&\sum_{p^\prime=1}^{\mu_3}\sum_{q=1}^{\mu_2}\big(E_{1;i,q}(u)E_{2;q,p^\prime}(u)-E_{1,3;i,p^\prime}(u)\big)D^{\prime}_{3;p^\prime,j}(u),\\
\label{614}t^{\prime}_{\mu_1+i,\mu_1+\mu_2+j}(u)&=&-\sum_{p^\prime=1}^{\mu_3}E_{2;i,p'}(u)D^{\prime}_{3;p^\prime,j}(u),\\
\label{615}t^{\prime}_{\mu_1+\mu_2+i,\mu_1+j}(u)&=&-\sum_{p^\prime=1}^{\mu_3}D^{\prime}_{3;i,p^\prime}(u)F_{2;p^\prime,j}(u),
\end{eqnarray}
where (\ref{610}) holds for all $1\leq i,j\leq \mu_1$, 
(\ref{611}) holds for all $1\leq i\leq \mu_1$, $1\leq j\leq \mu_2$,
(\ref{612}) and (\ref{613}) hold for all $1\leq i\leq \mu_1$, $1\leq j\leq \mu_3$,
(\ref{614}) holds for all $1\leq i\leq \mu_2$, $1\leq j\leq \mu_3$,
and (\ref{615}) holds for all $1\leq i\leq \mu_3$, $1\leq j\leq \mu_2$.

\begin{lemma}\label{3be}
The following identities hold in $Y_{(\mu_1,\mu_2,\mu_3)}((u^{-1},v^{-1}))$:
\begin{equation}\label{61a}
[E_{1;i,j}(u), F_{2;h,k}(v)] = 0,
\end{equation}
\begin{multline}\label{61b}
[E_{1;i,j}(u), E_{2;h,k}(v)] = \\
  \dfrac{(-1)^{\pa{j}_2\pa{h}_2}}{u-v}\delta_{hj}\sum_{q=1}^{\mu_2}\big\lbrace\big(E_{1;i,q}(u)-E_{1;i,q}(v)\big)E_{2;q,k}(v)
+ E_{1,3;i,k}(v) - E_{1,3;i,k}(u)\big\rbrace,
\end{multline}
\begin{multline}\label{61c}
[E_{1,3;i,j}(u), E_{2;h,k}(v)] =\\
(-1)^{\pa{i}_1\pa{j}_3+\pa{i}_1\pa{h}_2+\pa{h}_2\pa{j}_3+\pa{g}_2}
E_{2;h,j}(v) [E_{1;i,g}(u), E_{2;g,k}(v)],
\end{multline}
\begin{multline}\label{61d}
\sum_{q=1}^{\mu_2}[E_{1;i,j}(u), E_{1,3;h,k}(v) - E_{1;h,q}(v) E_{2;q,k}(v)] =\\
(-1)^{\pa{h}_1\pa{j}_2+\pa{j}_2\pa{k}_3+\pa{h}_1\pa{k}_3+\pa{g}_2+1}
[E_{1;i,g}(u), E_{2;g,k}(v)] E_{1;h,j}(u).
\end{multline}
Here (\ref{61a}) holds for all $1\leq i\leq \mu_1$, $1\leq j,k\leq \mu_2$, $1\leq h\leq \mu_3$, (\ref{61b}) holds for all $1\leq i\leq \mu_1$, $1\leq j,h\leq \mu_2$, $1\leq k\leq \mu_3$, (\ref{61c}) holds for all $1\leq i\leq \mu_1$, $1\leq h\leq \mu_2$, $1\leq j,k\leq \mu_3$, and (\ref{61d}) holds for all $1\leq i,h\leq \mu_1$, $1\leq j,g\leq \mu_2$, $1\leq k\leq \mu_3$. 
\end{lemma}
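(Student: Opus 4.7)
The plan is to derive each of (\ref{61a})--(\ref{61d}) by a direct computation that starts from a suitably chosen supercommutator of $t$- or $t'$-series, evaluated via (\ref{RTT}) or (\ref{usefull}), and then substituted using the Gauss-decomposition identities (\ref{610})--(\ref{615}) to convert everything into the parabolic generators. The $n=2$ relations already proved in Proposition \ref{n=2} together with the $D$-$D$ relations of Proposition \ref{dd0} (and their images under the embeddings $\psi_L$ of Section 4) then let us move the $D$- and $D'$-factors across the brackets and strip them off.

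For (\ref{61a}), I begin from $[t_{i,\mu_1+j}(u), t'_{\mu_1+\mu_2+h,\mu_1+k}(v)]$. The two Kronecker deltas in (\ref{usefull}) both vanish under the given index restrictions, so this bracket is zero. Substituting $t_{i,\mu_1+j}(u)$ via (\ref{611}) and $t'_{\mu_1+\mu_2+h,\mu_1+k}(v)$ via (\ref{615}), and then pushing the $D_1(u)$- and $D'_3(v)$-factors across the remaining bracket by means of the $D$-$E$, $D$-$F$ and $D$-$D$ relations obtained from Proposition \ref{n=2} (applied to the sub-block structures via $\psi_L$) and Proposition \ref{dd0}, reduces the identity to an expression of the form $D_{1;i,p}(u)\,D'_{3;h,p'}(v)\,[E_{1;p,j}(u), F_{2;p',k}(v)] = 0$. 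Multiplying on the left by $D'_1(u)$ and on the right by $D_3(v)$, after first inserting the appropriate compensating sign factor exactly as in the last stage of the proof of (\ref{p517}), yields (\ref{61a}). The proof of (\ref{61b}) follows the same template as that of (\ref{p511}), starting from $[t_{i,\mu_1+j}(u), t_{\mu_1+h,\mu_1+\mu_2+k}(v)]$; the $E_{1,3}$-terms appear precisely because $t_{i,\mu_1+\mu_2+k}(v) = \sum_p D_{1;i,p}(v)\,E_{1,3;p,k}(v)$ by (\ref{612}).

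For (\ref{61c}) and (\ref{61d}), I will exploit the series form of (\ref{ter}), namely $E_{1,3;i,j}(u) = (-1)^{|g|_2}[E_{1;i,g}(u), E_{2;g,j}^{(1)}]$ for any fixed $1\le g\le \mu_2$. Expanding $[E_{1,3;i,j}(u), E_{2;h,k}(v)]$ via the super-Jacobi identity writes it as a combination of two iterated brackets: one whose inner bracket is $[E_{2;g,j}^{(1)}, E_{2;h,k}(v)]$, governed by the image under $\psi_{\mu_1}$ of (\ref{p518}) applied to the sub-composition $(\mu_2,\mu_3)$, and one whose inner bracket is $[E_{1;i,g}(u), E_{2;h,k}(v)]$, given by (\ref{61b}) just established. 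After cancellation, what survives matches the right-hand side of (\ref{61c}); an analogous argument with $E_{1,3;h,k}(v)$ expanded on the right yields (\ref{61d}).

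The main obstacle throughout is the sign bookkeeping. Since $\so$ is arbitrary, the restricted parities $|i|_a$ may all be nonzero simultaneously within a single block, so each intermediate equality carries a nontrivial sign depending on two or three parity parameters. The key trick, used already in the proof of Proposition \ref{n=2}, is to pre-multiply an intermediate identity by a carefully chosen $(-1)^{|i|_a|j|_b}$ so that an index appearing in a Kronecker delta or in a summation no longer contaminates the sign factors, thereby allowing the matrices $D_a(u)$ or $D'_b(v)$ to be multiplied from either side cleanly. Performing this bookkeeping consistently at each stage of (\ref{61a})--(\ref{61d}), and ensuring in the super-Jacobi expansion for (\ref{61c})--(\ref{61d}) that the free parity $|g|_2$ cancels against the sign from (\ref{ter}), is where most of the care will be required.
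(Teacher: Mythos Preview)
Your outline for (\ref{61a}) is exactly the paper's. For (\ref{61b}) you have the right idea, but note that the paper starts from $[t_{i,\mu_1+j}(u),\,t'_{\mu_1+h,\mu_1+\mu_2+k}(v)]$ (with a prime on the second factor), not $[t,t]$. This matters: via (\ref{614}) one has $t'_{\mu_1+h,\mu_1+\mu_2+k}(v)=-\sum_{p'}E_{2;h,p'}(v)D'_{3;p',k}(v)$, which is already in the desired $E_2$-form, whereas the unprimed entries $t_{\mu_1+h,\mu_1+j}$ and $t_{\mu_1+h,\mu_1+\mu_2+k}$ sit in the $(2,2)$- and $(2,3)$-blocks of $T$ and carry $F_1D_1E_1$- and $F_1D_1E_{1,3}$-contributions that would have to be eliminated.

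For (\ref{61c}) and (\ref{61d}) your route genuinely differs from the paper's. The paper does \emph{not} apply super-Jacobi directly to $[E_{1,3},E_2]$. Instead, for (\ref{61c}) it first uses (\ref{ter}) and super-Jacobi (together with $[E_1,D'_3]=0$) to compute $[E_{1,3;i,j}(u),D'_{3;h,k}(v)]$, and then exploits the vanishing of $[t_{i,\mu_1+\mu_2+j}(u),t'_{\mu_1+h,\mu_1+\mu_2+k}(v)]$ to obtain $[E_{1,3;i,j}(u),E_{2;h,q}(v)D'_{3;q,k}(v)]=0$; combining and stripping $D'_3$ yields (\ref{61c}). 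For (\ref{61d}) the paper analogously computes $[D_{1},E_{1,3}-\sum_qE_{1}E_{2}]$ and then uses $[t_{i,\mu_1+j}(u),t'_{h,\mu_1+\mu_2+k}(v)]=0$. Your direct super-Jacobi expansion does work cleanly when $\mu_2\ge 2$: choosing the free index $g$ in (\ref{ter}) with $g\neq h$ (for (\ref{61c})) or $g\neq j$ (for (\ref{61d})) kills the $\delta$-terms coming from (\ref{61b}), and the surviving piece is the answer. Two caveats, however: when $\mu_2=1$ you cannot avoid $g=h$ (resp.\ $g=j$), so the extra $\delta$-terms must be tracked and shown to combine correctly; and for (\ref{61c}) your computation naturally produces $[E_{1;i,g}(u),E_{2;g,k}(v)]\,E_{2;h,j}(v)$ with $E_2$ on the \emph{right}, whereas the stated identity has $E_2$ on the left --- reconciling the two amounts to $[[E_1,E_2],E_2]=0$, which is (\ref{63a}) and in the paper's logical order is proved \emph{after} Lemma~\ref{3be}. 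The paper's $[t,t']$ strategy sidesteps both issues uniformly.
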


\begin{proof}

By (\ref{usefull}), we have $[t_{i,\mu_1+j}(u),t^{\prime}_{\mu_1+\mu_2+h,\mu_1+k}(v)]=0$. Substituting by (\ref{611}) and (\ref{615}), we have
\[
[D_{1;i,p}(u)E_{1;p,j}(u),-D^{\prime}_{3;h,q}(v)F_{2;q,k}(v)]=0.
\]
Computing the bracket, we obtain
\begin{multline}\label{3ee1}
D_{1;i,p}(u)E_{1;p,j}(u)D^{\prime}_{3;h,q}(v)F_{2;q,k}(v)-\\
(-1)^{(\pa{i}_1+\pa{j}_2)(\pa{h}_3+\pa{k}_2)}D^{\prime}_{3;h,q}(v)F_{2;q,k}(v)D_{1;i,p}(u)E_{1;p,j}(u)=0,
\end{multline}
where $p$ and $q$ are summed over $1,\ldots,\mu_1$ and $1,\ldots,\mu_3$, respectively. Similarly, by (\ref{usefull}), we have
\[
[t_{ij}(u),t^{\prime}_{\mu_1+\mu_2+h,\mu_1+k}(v)]=[t_{i,\mu_1+j}(u),t^{\prime}_{\mu_1+\mu_2+h,\mu_1+\mu_2+k}(v)]=0,
\]
which implies that
\[
[D_{1;i,j}(u),F_{2;h,k}(v)]=[E_{1;i,j}(u),D^{\prime}_{3;h,k}(v)]=0.
\]
Substituting these into (\ref{3ee1}) and using the fact that $D_{1;i,j}(u)$, $D^{\prime}_{3;h,k}(v)$ supercommute, we have
\begin{multline*}
(-1)^{\pa{j}_2\pa{q}_3}D_{1;i,p}(u)D^{\prime}_{3;h,q}(v)E_{1;p,j}(u)F_{2;q,k}(v)-\\
(-1)^{\pa{j}_2\pa{k}_2+\pa{p}_1\pa{q}_3+\pa{p}_1\pa{k}_2}D_{1;i,p}(u)D^{\prime}_{3;h,q}(v)F_{2;q,k}(v)E_{1;p,j}(u)=0.
\end{multline*}
The sign factors are free from the indices $i$ and $h$. Multiplying $D_{3}(v)D^{\prime}_{1}(u)$ from the left, we obtain (\ref{61a})

By (\ref{usefull}) again, we have
\[
(u-v)[t_{i,\mu_1+j}(u),t^{\prime}_{\mu_1+h,\mu_1+\mu_2+k}(v)]=(-1)^{\pa{j}_2\pa{h}_2}\delta_{jh}\sum_{s=1}^{M+N}t_{is}(u)t_{s,\mu_1+\mu_2+k}(v).
\]
Substituting by (\ref{610})--(\ref{615}), we have
\begin{multline}\label{3ee2}
(u-v)[D_{1;i,p}(u)E_{1;p,j}(u),-E_{2;h,q}(v)D^{\prime}_{3;q,k}(v)]=\\
(-1)^{\pa{j}_2\pa{h}_2}\delta_{jh}D_{1;i,p}(u)\big\lbrace \big(E_{1;p,r}(v)E_{2;r,q}(v)-E_{1,3;p,q}(v)\big)\\
-E_{1;p,r}(u)E_{2;r,q}(v)+E_{1,3;p,q}(u)\big\rbrace D^{\prime}_{3;q,k}(v),
\end{multline}
where the indices $p,q,r$ are summed from 1 to $\mu_1,\mu_3,\mu_2$, respectively.
Using the facts that
\begin{eqnarray*}
\big[E_{1;i,j}(v),D^{\prime}_{3;h,k}(u)\big]=0,& \qquad\big(\text{proved in the proof of (\ref{61a})}\big)\\
\big[E_{2;i,j}(v),D_{1;h,k}(u)\big]=0,& \qquad \big(\text{obtained from}\;\;[t_{i,j}(u),t^{\prime}_{\mu_1+h,\mu_1+\mu_2+k}(v)]=0\big)
\end{eqnarray*}
we may rewrite (\ref{3ee2}) as the following
\begin{multline}\label{3ee3}
(u-v) D_{1;i,p}(u) \lbrace E_{1;p,j}(u)E_{2;h,q}(v)-\\
(-1)^{(\pa{j}_2+\pa{p}_1)(\pa{h}_2+\pa{q}_3)}E_{2;h,q}(v)E_{1;p,j}(u)\rbrace D^{\prime}_{3;q,k}(v)\\
=(-1)^{\pa{j}_2\pa{h}_2+1}\delta_{jh}D_{1;i,p}(u)\big\lbrace \big(E_{1;p,r}(v)E_{2;r,q}(v)-E_{1,3;p,q}(v)\big)\\
-E_{1;p,r}(u)E_{2;r,q}(v)+E_{1,3;p,q}(u)\big\rbrace D^{\prime}_{3;q,k}(v).
\end{multline}
The sign factors are free from the indices $i$ and $k$. Canceling $D_{1}(u)$ from the left and $D^{\prime}_3(v)$ from the right on both sides of (\ref{3ee3}) and dividing both sides by $u-v$, we have deduced (\ref{61b}).

To show (\ref{61c}), the identity (\ref{p512}) in $Y_{(\mu_2,\mu_3)}((u^{-1},v^{-1}))$ reads as
\begin{equation*}
(u-v)[E_{1;h,k}(u),D^{\prime}_{2;i,j}(v)]=(-1)^{\pa{i}_2\pa{k}_2}\delta_{ki}\big(E_{1;h,p}(u)-E_{1;h,p}(v)\big)D^{\prime}_{2;p,j}(v).
\end{equation*}
Applying the map $\psi_{\mu_1}$ to this identity and using (4.3)$-$(4.5), we have the following identity in $Y_{(\mu_1,\mu_2,\mu_3)}((u^{-1},v^{-1}))$
\begin{equation*}
(u-v)[E_{2;h,k}(u),D^{\prime}_{3;i,j}(v)]=(-1)^{\pa{i}_3\pa{k}_3}\delta_{ki}\big(E_{2;h,p}(u)-E_{2;h,p}(v)\big)D^{\prime}_{3;p,j}(v).
\end{equation*}
Taking the coefficient of $u^0$ in the above identity, we obtain
\begin{equation*}
[E_{2;h,k}^{(1)},D^{\prime}_{3;i,j}(v)]=-(-1)^{\pa{i}_3\pa{k}_3}\delta_{ki}E_{2;h,p}(v)D^{\prime}_{3;p,j}(v).
\end{equation*}
Also by (\ref{ter}), we have 
\begin{equation*}
E_{1,3;i,j}(u)=(-1)^{\pa{g}_2}[E_{1;i,g}(u),E^{(1)}_{2;g,j}],\; \text{for any} \; 1\leq g\leq\mu_2.
\end{equation*}
By (\ref{612}), (\ref{613}), the super Jacobi identity and the fact that $[E_{1;i,g}(u),D^{\prime}_{3;h,k}(v)]=0$, we have
\begin{align}\notag\label{3ee4}
[E_{1,3;i,j}(u),D^{\prime}_{3;h,k}(v)]&=\big[(-1)^{\pa{g}_2}[E_{1;i,g}(u),E^{(1)}_{2;g,j}],D^{\prime}_{3;h,k}(v)\big]\\\notag
&=(-1)^{\pa{g}_2}\big[E_{1;i,g}(u),[E^{(1)}_{2;g,j},D^{\prime}_{3;h,k}(v)]\big]+0\\\notag
&=(-1)^{\pa{g}_2}\big[E_{1;i,g}(u),-(-1)^{\pa{j}_3\pa{h}_3}\delta_{hj}E_{2;g,p}(v)D^{\prime}_{3;p,k}(v)\big]\\
&=(-1)^{1+\pa{g}_2+\pa{j}_3\pa{h}_3}\delta_{hj}\big[E_{1;i,g}(u),E_{2;g,p}(v)\big]D^{\prime}_{3;p,k}(v). 
\end{align}
By (\ref{usefull}) and (\ref{610})--(\ref{615}), we have
\[
[t_{i,\mu_1+\mu_2+j}(u),t^{\prime}_{\mu_1+h,\mu_1+\mu_2+k}(v)]=[D_{1;i,p}(u)E_{1,3;p,j}(u),-E_{2;h,q}(v)D^{\prime}_{3;q,k}(v)]=0,
\]
where $p$ and $q$ are summed from 1 to $\mu_1$ and $\mu_3$, respectively. Multiplying $D_1^{\prime}(u)$ from the left, we have $[E_{1,3;i,j}(u),E_{2;h,q}(v)D^{\prime}_{3;q,k}(v)]=0,$ which may be written as
\begin{multline*}
[E_{1,3;i,j}(u),E_{2;h,q}(v)]D^{\prime}_{3;q,k}(v)+\\
(-1)^{(\pa{i}_1+\pa{j}_3)(\pa{h}_2+\pa{q}_3)}E_{2;h,q}(v)[E_{1,3;i,j}(u),D^{\prime}_{3;q,k}(v)]=0.
\end{multline*}
Substituting the last bracket by (\ref{3ee4}), we have
\begin{multline*}
[E_{1,3;i,j}(u),E_{2;h,q}(v)]D^{\prime}_{3;q,k}(v)=\\
(-1)^{(\pa{i}_1+\pa{j}_3)(\pa{h}_2+\pa{j}_3)+\pa{g}_2+\pa{j}_3\pa{j}_3}E_{2;h,j}(v)[E_{1;i,g}(u),E_{2;g,p}(v)]D^{\prime}_{3;p,k}(v).
\end{multline*}
Multiplying $D_{3}(v)$ from the right to the above equality, we acquire (\ref{61c}).

Taking the coefficient of $u^0$ in (\ref{61b}), we have
\begin{equation}\label{3ee5}
[E_{1;i,j}^{(1)},E_{2;h,k}(v)]=(-1)^{\pa{j}_2\pa{h}_2}\delta_{hj}\big(-E_{1;i,q}(v)E_{2;q,k}(v)+E_{1,3;i,k}(v)\big).
\end{equation}
Taking the coefficient of $v^0$ in (\ref{p511}), we have
\begin{equation}\label{3ee6}
[D_{1;i,j}(u),E_{1;h,k}^{(1)}]=(-1)^{\pa{j}_1\pa{h}_1}\delta_{hj}D_{1;i,p}(u)E_{1;p,k}(u).
\end{equation}
Together with the super Jacobi identity and the fact that $[D_{1;i,j}(u),E_{2;g,k}(v)]=0$, (\ref{3ee5}) and (\ref{3ee6}) imply that
\begin{align*}
[D_{1;i,j}(u),E_{1,3;h,k}(v)&-E_{1;h,q}(v)E_{2;q,k}(v)]=[D_{1;i,j}(u),(-1)^{\pa{g_2}}\big[E_{1;h,g}^{(1)},E_{2;g,k}(v)]\big]\notag\\
&=(-1)^{\pa{g_2}}\big[[D_{1;i,j}(u),E_{1;h,g}^{(1)}],E_{2;g,k}(v)\big]+0\\
&=(-1)^{\pa{g_2}}[(-1)^{\pa{j}_1\pa{h}_1}\delta_{hj}D_{1;i,p}(u)E_{1;p,g}(u),E_{2;g,k}(v)]\\
&=(-1)^{\pa{g_2}+\pa{j}_1\pa{h}_1}\delta_{hj}D_{1;i,p}(u)[E_{1;p,g}(u),E_{2;g,k}(v)].
\end{align*}
Summing $j$ from 1 to $\mu_1$ in the above identity, we derive
\begin{multline}\label{3ee7}
(-1)^{\pa{i}_1(\pa{h}_1+\pa{k}_3)}\big(E_{1;h,r}(v)E_{2;r,k}(v)-E_{1,3;h,k}(v)\big)D_{1;i,p}(u)\\
-(-1)^{\pa{p}_1(\pa{h}_1+\pa{k}_3)}D_{1;i,p}(u)\big(E_{1;h,r}(v)E_{2;r,k}(v)-E_{1,3;h,k}(v)\big),\\
=(-1)^{\pa{g}_2+\pa{p}_1\pa{k}_3}\delta_{hp}D_{1;i,p'}(u)[E_{1;p',g}(u),E_{2;g,k}(v)]
\end{multline}
where $r,p,p'$ are summed over $\mu_2, \mu_1 ,\mu_1$, respectively.

On the other hand, by (\ref{usefull}) and (\ref{610})--(\ref{615}), we have
\begin{multline}\label{3ee8}
[t_{i,\mu_1+j}(u),t^{\prime}_{h,\mu_1+\mu_2+k}(v)]=\\
[D_{1;i,p}(u)E_{1;p,j}(u),\big(E_{1;h,r}(v)E_{2;r,q}(v)-E_{1,3;h,q}(v)\big)D^{\prime}_{3;q,k}(v)]=0,
\end{multline}
where $p$ and $q$ are summed from 1 to $\mu_1$ and $\mu_3$, respectively. Multiplying $D_{3}(v)$ from the right, note that $D_3$ supercommutes with $E_1$ and $D_1$, and computing the bracket, we obtain
\begin{multline}\label{3ee9}
D_{1;i,p}(u)E_{1;p,j}(u)\big(E_{1;h,r}(v)E_{2;r,k}(v)-E_{1,3;h,k}(v)\big)\\
-(-1)^{(\pa{i}_1+\pa{j}_2)(\pa{h}_1+\pa{k}_3)} \big(E_{1;h,r}(v)E_{2;r,k}(v)-E_{1,3;h,k}(v)\big)D_{1;i,p}(u)E_{1;p,j}(u) =0
\end{multline}
where $p$ and $r$ are summed from 1 to $\mu_1$ and $\mu_2$, respectively. Substituting (\ref{3ee7}) into the second term of (\ref{3ee9}), we have
\begin{multline*}
D_{1;i,p}(u)E_{1;p,j}(u)\big(E_{1;h,r}(v)E_{2;r,k}(v)-E_{1,3;h,k}(v)\big)\\
-(-1)^{(\pa{p}_1+\pa{j}_2)(\pa{h}_1+\pa{k}_3)} \big\lbrace D_{1;i,p}(u)\big(E_{1;h,r}(v)E_{2;r,k}(v)-E_{1,3;h,k}(v)\big)\big\rbrace E_{1;p,j}(u) \\
-(-1)^{(\pa{g}_2+\pa{j}_2)(\pa{h}_1+\pa{k}_3)} \big\lbrace D_{1;i,p}(u)\big[E_{1;p,g}(u),E_{2;g,k}(v)\big] \big\rbrace E_{1;h,j}(u)=0.
\end{multline*}
Multiplying $D^{\prime}_{1}(u)$ from the left, we deduce that
\begin{multline*}
E_{1;i,j}(u)\big(E_{1;h,q}(v)E_{2;q,k}(v)-E_{1,3;h,k}(v)\big)\\
-(-1)^{(\pa{i}_1+\pa{j}_2)(\pa{h}_1+\pa{k}_3)}\big(E_{1;h,q}(v)E_{2;q,k}(v)-E_{1,3;h,k}(v)\big)E_{1;i,j}(u)\\
=(-1)^{\pa{j}_2(\pa{h}_1+\pa{k}_3)+\pa{g}_2+\pa{h}_1\pa{k}_3}\big[E_{1;i,g}(u),E_{2;g,k}(v)\big]E_{1;h,j}(u).
\end{multline*}
Simplifying the above, we obtain (\ref{61d}).
\end{proof}

The following lemma can be deduced by applying the automorphism $\zeta_{N|M}$ to the corresponding identities of Lemma \ref{3be} in $Y_{\overleftarrow{\mu}}=Y_{(\mu_3,\mu_2,\mu_1)}=Y_{N|M}$.
\begin{lemma}\label{3bf}
The following identities hold in $Y_{(\mu_1,\mu_2,\mu_3)}((u^{-1},v^{-1}))$:
\begin{equation}\label{62a}
[F_{1;i,j}(u),E_{2;h,k}(v)]=0,
\end{equation}
\begin{multline}\label{62b}
[F_{1;i,j}(u),F_{2;h,k}(v)]=
\dfrac{(-1)^{\pa{i}_2\pa{j}_1+\pa{i}_2\pa{h}_3+\pa{j}_1\pa{h}_3}}{u-v}\delta_{ik}\times\\
\big\lbrace \sum_{q=1}^{\mu_2}F_{2;h,q}(v)\big(F_{1;q,j}(v)-F_{1;q,j}(u)\big)-F_{3,1;h,j}(v)+F_{3,1;h,j}(u)\big\rbrace,
\end{multline}
\begin{multline}\label{62c}
[F_{3,1;i,j}(u),F_{2;h,k}(v)]=
(-1)^{\pa{i}_3\pa{j}_1+\pa{i}_3\pa{h}_3+\pa{j}_1\pa{h}_3+\pa{g}_2+1}
[F_{2;h,g}(v),F_{1;g,j}(u)]F_{2;i,k}(v),
\end{multline}
\begin{multline}\label{62d}
\sum_{q=1}^{\mu_2}[F_{1;i,j}(u) , F_{2;h,q}(v)F_{1;q,k}(v)-F_{3,1;h,k}(v)]=\\
(-1)^{(\pa{h}_3+\pa{j}_1)(\pa{k}_1+\pa{g}_2)} F_{1;i,k}(u)[F_{1;g,j}(u),F_{2;h,g}(v)].
\end{multline}
Here (\ref{62a}) holds for all $1\leq i,h\leq \mu_2$, $1\leq j\leq \mu_1$, $1\leq k\leq \mu_3$, (\ref{62b}) holds for all $1\leq i,k\leq \mu_2$, $1\leq j\leq \mu_1$, $1\leq h\leq \mu_3$, (\ref{62c}) holds for all $1\leq i,h\leq \mu_3$, $1\leq j\leq \mu_1$, $1\leq j,k\leq \mu_2$, and (\ref{62d}) holds for all $1\leq i,g\leq \mu_2$, $1\leq j,k\leq \mu_1$, $1\leq h\leq \mu_3$.
\end{lemma}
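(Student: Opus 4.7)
The plan is to obtain Lemma \ref{3bf} from Lemma \ref{3be} by the symmetry afforded by the isomorphism $\zeta_{N|M}$ of Proposition \ref{zetadef}. Since Lemma \ref{3be} holds in any super Yangian with any length-three composition and any 01-sequence, I would first apply it to $Y_{\overleftarrow{\mu}}(\sd) = Y_{N|M}(\sd)$ with the reversed composition $\overleftarrow{\mu} = (\mu_3, \mu_2, \mu_1)$. This produces four identities of precisely the shape (\ref{61a})--(\ref{61d}), now formulated in the parabolic generators $\overleftarrow{E}$, $\overleftarrow{F}$, $\overleftarrow{E}_{1,3}$ of $Y_{\overleftarrow{\mu}}(\sd)$.

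Next, I would invoke $\zeta_{N|M}: Y_{\overleftarrow{\mu}}(\sd) \to Y_\mu(\so)$. By Proposition \ref{zetadef} read in the reverse direction, this isomorphism sends each $\overleftarrow{E}_{a;h,k}(u)$ to $-F_{n-a;\mu_{n+1-a}+1-h,\mu_{n-a}+1-k}(u)$, each $\overleftarrow{F}_{a;k,h}(u)$ to $-E_{n-a;\mu_{n-a}+1-k,\mu_{n+1-a}+1-h}(u)$, and the long-root generator $\overleftarrow{E}_{1,3;i,k}(u)$ to $-F_{3,1;\mu_3+1-i,\mu_1+1-k}(u)$. Substituting these formulas into the four $E$-type identities, then relabeling the free indices via $i \mapsto \mu_{\ast}+1-i$ and applying the supercommutator skew-symmetry where needed to bring each bracket into the order stated in the lemma, converts the $E$-type identities of Lemma \ref{3be} into the $F$-type identities (\ref{62a})--(\ref{62d}) of Lemma \ref{3bf}.

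The main technical obstacle will be verifying the sign factors. Two observations keep the bookkeeping manageable. First, since $\zeta_{N|M}$ is parity-preserving, it respects the supercommutator, so the two factors of $-1$ produced by $\overleftarrow{E} \mapsto -F$ and $\overleftarrow{F} \mapsto -E$ enter each bracket in pairs and cancel. Second, the restricted parities in $(\overleftarrow{\mu},\sd)$ are related to those in $(\mu,\so)$ by
\[
\overline{\pa{i}}_b \equiv 1 + \pa{\mu_{n+1-b}+1-i}_{n+1-b} \pmod 2,
\]
since the passage from $\so$ to $\sd$ involves both reversing positions and flipping digits, and the digit flip contributes the constant $1$. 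After substituting the relabeled indices, the $\delta$-symbol constraints on the right-hand sides of (\ref{61a})--(\ref{61d}) force certain pairs of indices to coincide, and together with the identity $\pa{\cdot}^2 \equiv \pa{\cdot}$ the spurious $+1$ terms cancel pairwise. What remains in each exponent matches precisely the sign patterns stipulated in (\ref{62a})--(\ref{62d}). The verification is tedious but entirely mechanical and uses no technique beyond the sign manipulations already performed in the proof of Lemma \ref{3be}.
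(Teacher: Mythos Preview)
Your proposal is correct and takes essentially the same approach as the paper: the paper's entire proof is the single remark that the lemma ``can be deduced by applying the automorphism $\zeta_{N|M}$ to the corresponding identities of Lemma \ref{3be} in $Y_{\overleftarrow{\mu}}=Y_{(\mu_3,\mu_2,\mu_1)}=Y_{N|M}$,'' which is exactly what you describe. Your additional remarks about the parity shift $\overline{\pa{i}}_b \equiv 1 + \pa{\mu_{n+1-b}+1-i}_{n+1-b}$ and the pairwise cancellation of the $-1$'s from $\overleftarrow{E}\mapsto -F$ inside brackets are the right observations for organizing the bookkeeping, though the phrase ``spurious $+1$ terms cancel pairwise'' slightly understates the work---the full sign match also absorbs contributions from the supercommutator skew-symmetry used to reorder $[F_2,F_1]$ into $[F_1,F_2]$ and from reordering the products on the right-hand sides, not just from the parity substitution alone.
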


Our next lemma is a generalization of \cite[Lemma 6.3]{Pe1}. It is surprising that there are no sign factors appearing in the resulting identities.
\begin{lemma}\label{3ef0}
The following identities hold in $Y_{(\mu_1,\mu_2,\mu_3)}[[u^{-1},v^{-1},w^{-1}]]$:
\begin{eqnarray}
\label{63a}&\big[[E_{1;i,j}(u),E_{2;h,k}(v)],E_{2;f,g}(v)\big]=0,\\[1mm]
\notag&\big[E_{1;i,j}(u),[E_{1;h,k}(u),E_{2;f,g}(v)]\big]=0,\\[1mm] 
\label{63c}&\big[[E_{1;i,j}(u),E_{2;h,k}(v)],E_{2;f,g}(w)\big]+
\big[[E_{1;i,j}(u),E_{2;h,k}(w)],E_{2;f,g}(v)\big]=0,\\[1mm]
\notag&\big[E_{1;i,j}(u),[E_{1;h,k}(v),E_{2;f,g}(w)]\big]+
\big[E_{1;i,j}(v),[E_{1;h,k}(u),E_{2;f,g}(w)]\big]=0,\\[1mm]
\notag&\big[[F_{1;i,j}(u),F_{2;h,k}(v)],F_{2;f,g}(v)\big]=0,\\[1mm]
\notag&\big[F_{1;i,j}(u),[F_{1;h,k}(u),F_{2;f,g}(v)]\big]=0,\\[1mm]
\notag&\big[[F_{1;i,j}(u),F_{2;h,k}(v)],F_{2;f,g}(w)\big]+\big[[F_{1;i,j}(u),F_{2;h,k}(w)],F_{2;f,g}(v)\big]=0,\\[1mm]
\notag&\big[F_{1;i,j}(u),[F_{1;h,k}(v),F_{2;f,g}(w)]\big]+\big[F_{1;i,j}(v),[F_{1;h,k}(u),F_{2;f,g}(w)]\big]=0.
\end{eqnarray}
The identities hold for all $1\leq i\leq \mu_a, 1\leq j\leq \mu_{a+1}$ if $E_{a;i.j}(u)$ appears and
hold for all $1\leq h \leq \mu_{a+1}$, $1\leq k \leq \mu_a$ if $F_{a;h,k}(u)$ appears, where $a$=1 or 2.
\end{lemma}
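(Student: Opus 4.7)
The plan is to reduce the eight identities to two key ones and prove those by systematic use of Lemma \ref{3be}. By Proposition \ref{zetadef}, the isomorphism $\zeta_{M|N}:Y_\mu(\so)\to Y_{\overleftarrow{\mu}}(\sd)$ sends each $E_a$ to $-\overleftarrow{F}$ and each $F_a$ to $-\overleftarrow{E}$ with reversed indices; hence the four $F$-identities follow from the four $E$-identities applied in $Y_{(\mu_3,\mu_2,\mu_1)}(\sd)$, the overall sign $(-1)^3$ being immaterial since each identity asserts vanishing. Moreover, specializing $w=v$ in (\ref{63c}) yields twice identity (\ref{63a}), and specializing $v=u$ in the fourth $E$-identity yields twice the second; so it suffices to establish the two symmetrized identities.

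For (\ref{63c}), I would multiply through by $(u-v)$ and substitute (\ref{61b}) to rewrite $(u-v)[E_{1;i,j}(u),E_{2;h,k}(v)]$ as $(-1)^{\pa{j}_2\pa{h}_2}\delta_{hj}$ times an explicit combination of $E_1(u)E_2(v)$, $E_1(v)E_2(v)$, $E_{1,3}(u)$ and $E_{1,3}(v)$. Taking the outer supercommutator with $E_{2;f,g}(w)$ then decomposes into three kinds of pieces: brackets $[E_{1,3},E_2]$, handled by (\ref{61c}); a second $[E_1,E_2]$ bracket, again handled by (\ref{61b}); and $[E_2(v),E_2(w)]$, obtained by applying the embedding $\psi_{\mu_1}$ to relation (\ref{p518}) in $Y_{(\mu_2,\mu_3)}(\so_2\so_3)$. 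After symmetrizing in $v\leftrightarrow w$, the $(v-w)$-factors that emerge cause the surviving terms to cancel pairwise.

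For the fourth $E$-identity, I would apply (\ref{61b}) to the inner bracket $(v-w)[E_{1;h,k}(v),E_{2;f,g}(w)]$ and then compute $[E_{1;i,j}(u),\,\cdot\,]$ on each resulting term. The ingredients are: $[E_1(u),E_1(v)]$, which is (\ref{p518}) in $Y_{(\mu_1,\mu_2)}(\so_1\so_2)$; $[E_1(u),E_2(w)]$ from (\ref{61b}); and crucially formula (\ref{61d}) for $[E_1(u),E_{1,3}(v)-E_1(v)E_2(v)]$, which is precisely the combination produced by the first substitution. Antisymmetrizing in $u\leftrightarrow v$ then produces the required cancellation.

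The main obstacle, as emphasized throughout the paper, is sign bookkeeping: each of the indices $i,j,h,k,f,g$ can independently be even or odd, so sign factors of the form $(-1)^{\pa{i}_a\pa{h}_b+\cdots}$ appear in every intermediate step. The essential technique, illustrated in the proof of (\ref{p517}) above, is to insert compensating sign factors so that once a Kronecker delta identifies two indices (e.g.\ $\delta_{hj}$ allows one to replace $\pa{h}_a$ by $\pa{j}_a$) the remaining sign factors become independent of the contracted index, permitting further multiplication by the diagonal blocks $D_a(u)$ or $D_a^{\prime}(u)$ without spurious signs. Once this bookkeeping is handled carefully, the cancellations proceed in parallel to \cite[Lemma 6.3]{Pe1}, which accounts for the striking absence of sign factors in the final statements.
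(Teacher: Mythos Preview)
Your strategy coincides with the paper's: reduce the $F$-relations to the $E$-relations via $\zeta_{M|N}$, and attack the three-variable $E$-identities by multiplying through by $(u-v)$ (or $(v-w)$), inserting (\ref{61b}), and then closing with (\ref{61c}), (\ref{61d}) and the $\psi_{\mu_1}$-image of (\ref{p518}); the paper shows (\ref{3eeec1}) is symmetric in $v,w$, which is precisely your ``symmetrize and cancel'' step. The one organizational difference is that you deduce (\ref{63a}) and its companion from the symmetrized identities by specializing $w=v$ (respectively $v=u$), whereas the paper proves (\ref{63a}) separately by an independent specialization trick---computing $(u-v-(-1)^{\pa{j}_2})\big[[E_1(u),E_2(v)],E_2(v)\big]$ and then setting $u=v+(-1)^{\pa{j}_2}$. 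Your route is slightly more economical; both are valid.
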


\begin{proof}
We prove (\ref{63a}) and (\ref{63c}) in detail while the others are similar.
To show (\ref{63a}), we first claim that
\[
[E_{a;i,j}(v),E_{a;h,k}(v)]=0 \quad \text{for}\; a=1,2 \quad\text{in}\quad Y_{(\mu_1,\mu_2,\mu_3)}[[v^{-1}]].
\]
Indeed, the case $a=1$ follows from (\ref{p518}) and $a=2$ follows from applying the map $\psi_{\mu_1}$ to (\ref{p518}) in $Y_{(\mu_2,\mu_3)}[[v^{-1}]]$.

By the super Jacobi identity, together with the above claim and (\ref{61b}), it suffices to prove the case when $j=h=f$. In this case, we compute the following bracket by Lemma~\ref{3be} as below.
{\allowdisplaybreaks
\begin{align*}
(&u-v)\big[[E_{1;i,j}(u),E_{2;j,k}(v)],E_{2;j,g}(v)\big]\\
&=(-1)^{(\pa{i}_1+\pa{j}_2)(\pa{j}_2+\pa{k}_3)}(u-v)\big[E_{2;j,k}(v),[E_{1;i,j}(u),E_{2;j,g}(v)]\big]\\
&=(-1)^{(\pa{i}_1+\pa{j}_2)(\pa{j}_2+\pa{k}_3)+(\pa{i}_1+\pa{g}_3)(\pa{j}_2+\pa{k}_3)}(u-v)\big[\,[E_{1;i,j}(u),E_{2;j,g}(v)] \,, E_{2;j,k}(v)\big]\\
&=(-1)^{(\pa{j}_2+\pa{g}_3)(\pa{j}_2+\pa{k}_3)}\big[(-1)^{\pa{j}_2} E_{1;i,q}(u) E_{2;q,g}(v)-E_{1;i,q}(v) E_{2;q,g}(v)\\
&\qquad\qquad + E_{1,3;i,g}(v) - E_{1,3;i,g}(u)\, , \,E_{2;j,k}(v)\big]\\
&=(-1)^{\pa{j}_2\pa{k}_3+\pa{j}_2\pa{g}_3+\pa{g}_3\pa{k}_3}\big\lbrace[E_{1;i,q}(u)E_{2;q,g}(v),E_{2;j,k}(v)]+[E_{1,3;i,g}(v),E_{2;j,k}(v)]\\
&\qquad -[E_{1;i,q}(v)E_{2;q,g}(v),E_{2;j,k}(v)]-[E_{1,3;i,g}(u),E_{2;j,k}(v)]\big\rbrace\\
&=(-1)^{\pa{j}_2\pa{k}_3+\pa{j}_2\pa{g}_3+\pa{g}_3\pa{k}_3}\big\lbrace
E_{1;i,q}(u)[E_{2;q,g}(v) , E_{2;j,k}(v)]  \\
&\,+(-1)^{(\pa{q}_2+\pa{g}_3)(\pa{j}_2+\pa{k}_3)}[ E_{1;i,q}(u), E_{2;j,k}(v)] E_{2;q,g}(v)
-E_{1;i,q}(v)[E_{2;q,g}(v) , E_{2;j,k}(v)]\\
&\,-(-1)^{(\pa{q}_2+\pa{g}_3)(\pa{j}_2+\pa{k}_3)}[ E_{1;i,q}(v), E_{2;j,k}(v)] E_{2;q,g}(v)\\
&\,+(-1)^{\pa{i}_1\pa{j}_2+\pa{i}_1\pa{g}_3+\pa{j}_2\pa{g}_3+\pa{j}_2} E_{2;j,g}(v) [E_{1;i,j}(v) , E_{2;j,k}(v)]\\
&\,-(-1)^{\pa{i}_1\pa{j}_2+\pa{i}_1\pa{g}_3+\pa{j}_2\pa{g}_3+\pa{j}_2} E_{2;j,g}(v) [E_{1;i,j}(u) , E_{2;j,k}(v)]\big\rbrace\\
&=(-1)^{\pa{j}_2\pa{k}_3+\pa{q}_2\pa{j}_2+\pa{q}_2\pa{k}_3} [E_{1;i,q}(u), E_{2;j,k}(v)] E_{2;q,g}(v) \\
&\,+(-1)^{\pa{j}_2+\pa{i}_1\pa{j}_2+\pa{i}_1\pa{g}_3+\pa{j}_2\pa{k}_3+\pa{g}_3\pa{k}_3} E_{2;j,g}(v)[ E_{1;i,j}(v), E_{2;j,k}(v)]\\
&\,-(-1)^{\pa{j}_2\pa{k}_3+\pa{q}_2\pa{j}_2+\pa{q}_2\pa{k}_3} [E_{1;i,q}(u), E_{2;j,k}(v)] E_{2;q,g}(v) \\
&\,-(-1)^{\pa{j}_2+\pa{i}_1\pa{j}_2+\pa{i}_1\pa{g}_3+\pa{j}_2\pa{k}_3+\pa{g}_3\pa{k}_3} E_{2;j,g}(v)[ E_{1;i,j}(u), E_{2;j,k}(v)]\\
&=(-1)^{\pa{j}_2}\big[ [E_{1;i,j}(u) , E_{2;j,k}(v)] , E_{2;j,g}(v) \big] 
-(-1)^{\pa{j}_2}\big[ [E_{1;i,j}(v) , E_{2;j,k}(v)] , E_{2;j,g}(v) \big]
\end{align*}
}
Thus we have
\begin{multline}\label{3eee0}
(u-v-(-1)^{\pa{j}_2})\big[[E_{1;i,j}(u),E_{2;j,k}(v)],E_{2;j,g}(v)\big]=\\
-(-1)^{\pa{j}_2}\big[[E_{1;i,j}(v),E_{2;j,k}(v)],E_{2;j,g}(v)\big]
\end{multline}
Note that the right-hand side of (\ref{3eee0}) is independent of the choice of $u$. Specifying $u=v+(-1)^{\pa{j}_2}$ in (\ref{3eee0}), we have 
\begin{equation*}
0=-(-1)^{\pa{j}_2}\big[[E_{1;i,j}(v),E_{2;j,k}(v)],E_{2;j,g}(v)\big],
\end{equation*}
and hence 
\begin{equation*}
(u-v-(-1)^{\pa{j}_2})\big[[E_{1;i,j}(u),E_{2;j,k}(v)],E_{2;j,g}(v)\big]=0, \,\,\text{for any } u.
\end{equation*}
Choose $u$ such that $u-v-(-1)^{\pa{j}_2}$ is invertible, and then (\ref{63a}) follows.

To show (\ref{63c}), it suffices to show that
\begin{equation}\label{3eeec1}
(u-w)(v-w)(u-v)\big[[E_{1;i,j}(u),E_{2;h,k}(v)],E_{2;f,g}(w)\big]
\end{equation}
is symmetric in $v$ and $w$. We may further assume $j=h$, as in the proof of (\ref{63a}).
By (\ref{61b}), we have
\begin{multline*}
(u-w)(v-w)(u-v)\big[[E_{1;i,j}(u),E_{2;h,k}(v)],E_{2;f,g}(w)\big]
=(v-w)(u-w)\times\\
(-1)^{\pa{j}_2}\big[ \big(E_{1;i,q}(u)-E_{1;i,q}(v) \big)E_{2;q,k}(v)+E_{1,3;i,k}(v)-E_{1,3;i,k}(u)\,,\,E_{2;f,g}(w)\big].
\end{multline*}
Computing the brackets by Lemma \ref{3be}, we have
{\allowdisplaybreaks
\begin{align*}\notag
(u&-w)(v-w)(u-v)\big[[E_{1;i,j}(u),E_{2;h,k}(v)],E_{2;f,g}(w)\big]\\ 
=&(v-w)(u-w)(-1)^{\pa{j}_2}\big[ [E_{1;i,q}(u) , E_{2;q,k}(v)] , E_{2;f,g}(w) \big]\\
&-(v-w)(u-w)(-1)^{\pa{j}_2}\big[ [E_{1;i,q}(v) , E_{2;q,k}(v)] , E_{2;f,g}(w) \big]\\
&+(v-w)(u-w)(-1)^{\pa{j}_2}\big[ E_{1,3;i,k}(v) ,  E_{2;f,g}(w) \big]\\
&-(v-w)(u-w)(-1)^{\pa{j}_2}\big[ E_{1,3;i,k}(u) , E_{2;f,g}(w) \big]\\
=&(v-w)(u-w)(-1)^{\pa{j}_2}\big[ [E_{1;i,q}(u) , E_{2;q,k}(v)] , E_{2;f,g}(w) \big]\\
&-(v-w)(u-w)(-1)^{\pa{j}_2}\big[ [E_{1;i,q}(v) , E_{2;q,k}(v)] , E_{2;f,g}(w) \big]\\
&+(v-w)(u-w)(-1)^{\pa{j}_2+\pa{\ell}_2+\pa{i}_1\pa{f}_2+\pa{i}_1\pa{k}_3+\pa{f}_2\pa{k}_3} 
E_{2;f,k}(w)\big[ E_{1;i,\ell}(v) ,  E_{2;\ell,g}(w) \big]\\
&-(v-w)(u-w)(-1)^{\pa{j}_2+\pa{\ell}_2+\pa{i}_1\pa{f}_2+\pa{i}_1\pa{k}_3+\pa{f}_2\pa{k}_3}E_{2;f,k}(w)\big[ E_{1;i,\ell}(u) ,  E_{2;\ell,g}(w) \big]\\
=&(u-w)(v-w)(-1)^{\pa{j}_2}\big\lbrace E_{1;i,q}(u)E_{2;q,k}(v)E_{2;f,g}(w)\\
&\qquad\qquad\qquad-(-1)^{(\pa{f}_2+\pa{g}_3)(\pa{i}_1+\pa{k}_3)}E_{2;f,g}(w)E_{1;i,q}(u)E_{2;q,k}(v)\big\rbrace\\
&-(u-w)(v-w)(-1)^{\pa{j}_2}\big\lbrace E_{1;i,q}(u)E_{2;q,k}(v)E_{2;f,g}(w) \\
&\qquad\qquad\qquad-(-1)^{(\pa{f}_2+\pa{g}_3)(\pa{i}_1+\pa{k}_3)}E_{2;f,g}(w)E_{1;i,q}(u)E_{2;q,k}(v)\big\rbrace\\
&+(v-w)(u-w)(-1)^{\pa{j}_2+\pa{\ell}_2+\pa{i}_1\pa{f}_2+\pa{i}_1\pa{k}_3+\pa{f}_2\pa{k}_3} 
E_{2;f,k}(w)\big[ E_{1;i,\ell}(v) ,  E_{2;\ell,g}(w) \big]\\
&-(v-w)(u-w)(-1)^{\pa{j}_2+\pa{\ell}_2+\pa{i}_1\pa{f}_2+\pa{i}_1\pa{k}_3+\pa{f}_2\pa{k}_3}E_{2;f,k}(w)\big[ E_{1;i,\ell}(u) ,  E_{2;\ell,g}(w) \big]\\
=&(u-w)(v-w)(-1)^{\pa{j}_2} E_{1;i,q}(u)\big[E_{2;q,k}(v),E_{2;f,g}(w)\big]\\
&-(u-w)(v-w)(-1)^{\pa{j}_2} E_{1;i,q}(v)\big[E_{2;q,k}(v),E_{2;f,g}(w)\big]\\
&-(u-w)(v-w)(-1)^{\pa{j}_2+(\pa{f}_2+\pa{g}_3)(\pa{i}_1+\pa{k}_3)} \big[E_{2;f,g}(w),E_{1;i,q}(u)\big]E_{2;q,k}(v)\\ 
&+(u-w)(v-w)(-1)^{\pa{j}_2+(\pa{f}_2+\pa{g}_3)(\pa{i}_1+\pa{k}_3)} \big[E_{2;f,g}(w),E_{1;i,q}(v)\big]E_{2;q,k}(v)\\
&+(u-w)(v-w)(-1)^{\pa{j}_2+\pa{\ell}_2+\pa{g}_3\pa{f}_2+\pa{g}_3\pa{k}_3+\pa{f}_2\pa{k}_3}\big[E_{1;i,\ell}(v),E_{2;\ell,g}(w)\big]E_{2;f,k}(w)\\
&-(u-w)(v-w)(-1)^{\pa{j}_2+\pa{\ell}_2+\pa{g}_3\pa{f}_2+\pa{g}_3\pa{k}_3+\pa{f}_2\pa{k}_3}\big[E_{1;i,\ell}(u),E_{2;\ell,g}(w)\big]E_{2;f,k}(w).
\end{align*}}
We use (\ref{p518}) and Lemma \ref{3be} to compute these brackets, then (\ref{3eeec1}) equals to
\begin{align*}
&\quad \varepsilon(u-w)E_{1;i,q}(u)\big(E_{2;q,g}(v)-E_{2;q,g}(w)\big)\big(E_{2;f,k}(v)-E_{2;f,k}(w)\big)\\
\,&-\varepsilon(u-w) E_{1;i,q}(v)\big(E_{2;q,g}(v)-E_{2;q,g}(w)\big)\big(E_{2;f,k}(v)-E_{2;f,k}(w)\big)\\
\,&+\varepsilon(v-w) \big\lbrace \big(E_{1;i,q}(u)-E_{1;i,q}(w)\big)E_{2;q,g}(w)+E_{1,3;i,g}(w)-E_{1,3;i,g}(u)\big\rbrace E_{2;f,k}(v)\\
\,&-\varepsilon(u-w) \big\lbrace \big(E_{1;i,q}(v)-E_{1;i,q}(w)\big)E_{2;q,g}(w)+E_{1,3;i,g}(w)-E_{1,3;i,g}(v)\big\rbrace E_{2;q,k}(v)\\
\,&+\varepsilon(u-w)\big\lbrace \big(E_{1;i,q}(v)-E_{1;i,q}(w)\big)E_{2;q,g}(w)+E_{1,3;i,g}(w)-E_{1,3;i,g}(v)\big\rbrace E_{2;f,k}(w)\\
\,&-\varepsilon(v-w)\big\lbrace \big(E_{1;i,q}(u)-E_{1;i,q}(w)\big)E_{2;q,g}(w)+E_{1,3;i,g}(w)-E_{1,3;i,g}(u)\big\rbrace E_{2;f,k}(w),
\end{align*}
where the $\varepsilon$ is a sign factor given by $\varepsilon=(-1)^{\pa{j}_2+\pa{g}_3\pa{f}_2+\pa{g}_3\pa{k}_3+\pa{f}_2\pa{k}_3}$, and the index $q$ is summed over $1,\ldots,\mu_2$.

Opening the parentheses of the above identity, one may check that the resulting expression is indeed symmetric in $v$ and $w$. Therefore, (\ref{3eeec1}) is symmetric in $v$ and $w$ and hence (\ref{63c}) is established.
\end{proof}

Suppose now that $\mu=(\mu_1,\ldots,\mu_n)$ with $n\geq 4$. The next lemma is a generalization of \cite[Lemma 5]{Go} and \cite[Lemma 7.2]{Pe1}, in which the results were proved only for one specific index. 
Here we show that they in fact hold {\em everywhere} and we require some of them to obtain the desired defining relations; see (\ref{p715}), (\ref{p716}).
\begin{lemma}\label{extra}
Associated to $\mu=(\mu_1,\mu_2,\ldots\mu_n)$ with $n\geq 4$, we have the following identities in $Y_{\mu}$, called the super Serre relations:
\begin{equation}\label{eeee}
\big[\,[E_{a;i,f_1}^{(r)},E_{a+1;f_2,j}^{(1)}],[E_{a+1;h,g_1}^{(1)},E_{a+2;g_2,k}^{(s)}]\,\big]=0,
\end{equation}
\begin{equation}\label{ffff}
\big[\,[F_{a;f_1,i}^{(r)},F_{a+1;j,f_2}^{(1)}],[F_{a+1;g_1,h}^{(1)},F_{a+2;k,g_2}^{(s)}]\,\big]=0,\\[1mm]
\end{equation}
for all $1\leq a\leq n-3$ and all $1\leq i\leq \mu_a$, $1\leq f_1,f_2,h\leq \mu_{a+1}$, $1\leq g_1,g_2,j\leq \mu_{a+2}$, $1\leq k\leq \mu_{a+3}$.
\end{lemma}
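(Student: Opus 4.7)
The plan is to cascade the reductions available from Section~4 and then complete the argument with a super Jacobi calculation using the identities established in Lemmas~\ref{corcommute}, \ref{3be}, and~\ref{3ef0}. First, by (\ref{szf}) the isomorphism $\zeta_{M|N}$ interchanges the $E$- and $F$-generators of the super Yangian of the reversed composition and 01-sequence up to sign, so (\ref{ffff}) will follow from (\ref{eeee}) applied inside $Y_{\overleftarrow{\mu}}(\sd)$; hence it suffices to prove (\ref{eeee}). Applying $\psi_{\mu_1+\cdots+\mu_{a-1}}$ together with (\ref{psie}) reduces to the case $a=1$, and since the relation only involves $E_1,E_2,E_3$, one may further assume $\mu=(\mu_1,\mu_2,\mu_3,\mu_4)$. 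The $\delta_{hj}$ factor in (\ref{61b}) forces the two inner brackets of (\ref{eeee}) to vanish unless $f_1=f_2=:f$ and $g_1=g_2=:g$; writing $A:=E_{1;i,f}^{(r)}$, $B:=E_{2;f,j}^{(1)}$, $C:=E_{2;h,g}^{(1)}$, $D:=E_{3;g,k}^{(s)}$, the task becomes to show $[[A,B],[C,D]]=0$.

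The core of the argument rests on three vanishings. First, $[A,D]=0$ by Lemma~\ref{corcommute}, since $A$ lies in the top-left corner subalgebra $Y_{\mu_1+\mu_2}(\so_1\so_2)$ while $D$ lies in the image $\psi_{\mu_1+\mu_2}(Y_{\mu_3+\mu_4}(\so_3\so_4))$ by (\ref{psie}). Second, $[[A,B],C]=0$ is obtained by extracting the coefficient of $u^{-r}v^{-2}$ in (\ref{63a}), where only the $(s,t)=(1,1)$ contribution survives because $E^{(0)}=0$. Third, $[C,[B,D]]=0$ is obtained by applying $\psi_{\mu_1}$ to the unlabelled identity $[E_1(u),[E_1(u),E_2(v)]]=0$ of Lemma~\ref{3ef0} to get $[E_2(u),[E_2(u),E_3(v)]]=0$, then extracting the coefficient of $u^{-2}v^{-s}$. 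Repeated application of the super Jacobi identity with these three vanishings reduces $[[A,B],[C,D]]$ to an expression in which either $[A,C]$ or $[B,D]$ appears; by the $\delta_{hj}$ in (\ref{61b}) such an expression vanishes outright unless simultaneously $h=f$ and $g=j$.

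The residual case $h=f$, $g=j$, in which $B$ and $C$ coincide as formal expressions, is the main obstacle: the super Jacobi calculation then reproduces itself up to the sign $-(-1)^{|B|}$, giving only a tautology when the parity $|B|=\pa{f}_2+\pa{j}_3$ is odd. To close this case, I would expand $[B,D]$ via the power-series form of (\ref{61b}) applied to $E_2,E_3$, so that it becomes a scalar multiple of $E_{2,4;f,k}^{(s)}$ plus quadratic cross-terms $E_2^{(s_1)}E_3^{(s_2)}$, and then combine the consequence $[E_{1,3}^{(r)},E_2^{(1)}]=0$ of (\ref{61c}) (obtained because the right-hand side of (\ref{61c}) has no $v^{-1}$ contribution) with analogous commutation formulae for $[E_{1,3}^{(r)},E_3^{(s)}]$ and $[E_{1,3}^{(r)},E_{2,4}^{(s)}]$, derived through the identities of Lemma~\ref{3be}, to complete the cancellation. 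The dominant technical burden throughout is the careful tracking of sign factors arising from the $\mathbb{Z}_2$-grading, which depend both on the summation indices and on the parities $\pa{i}_1,\pa{f}_2,\pa{j}_3,\pa{k}_4$ and must remain in balance at every step, consistent with the introduction's warning that the sign bookkeeping is considerably more delicate here than in \cite{BK1, Go, Pe1}.
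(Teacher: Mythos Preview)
Your reductions via $\zeta_{M|N}$ and $\psi_{\mu_1+\cdots+\mu_{a-1}}$ to the case $a=1$, $n=4$, together with the collapse $f_1=f_2$ and $g_1=g_2$ forced by (\ref{61b}), are exactly what the paper does. Your super Jacobi analysis for the non-residual case is also sound (and could be shortened further using $[B,C]=[E_2^{(1)},E_2^{(1)}]=0$ from (\ref{p707})).

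The gap is in the residual case $h=f$, $g=j$ with $\pa{f}_2+\pa{j}_3$ odd. There your outline amounts to computing $[E_{1,3;i,j}^{(r)},\,[E_{2;f,j}^{(1)},E_{3;j,k}^{(s)}]]$, which by the $\psi_{\mu_1}$-shift of (\ref{3ee5}) requires knowing $[E_{1,3}^{(r)},E_{2}^{(t)}]$ for \emph{all} $t$ (not just $t=1$), $[E_{1,3}^{(r)},E_{3}^{(t)}]$, and $[E_{1,3}^{(r)},E_{2,4}^{(s)}]$. The last two involve genuinely $n=4$ objects and are \emph{not} available from Lemma~\ref{3be}, which lives in $Y_{(\mu_1,\mu_2,\mu_3)}$; your appeal to ``analogous commutation formulae \dots derived through the identities of Lemma~\ref{3be}'' is therefore circular. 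What you would actually need is precisely the series identity
\[
[E_{1,3;i,j}(u),\;E_{2;h,q}(v)E_{3;q,k}(v)-E_{2,4;h,k}(v)]=0,
\]
and this is exactly the content the paper isolates as the key claim (\ref{exeee}). The paper proves it not by Jacobi gymnastics but by going back to the RTT level: writing $E_{1,3}=D_1'\,t$ and $E_2E_3-E_{2,4}=t'\,D_4$ via the Gauss factorization, and then using (\ref{usefull}) to kill the resulting $[t,t']$ bracket because the indices cannot overlap. Once (\ref{exeee}) is in hand, a single coefficient extraction in
\[
(u-v)(w-z)\big[[E_{1;i,f}(u),E_{2;f,j}(v)],[E_{2;h,g}(w),E_{3;g,k}(z)]\big]
\]
at $u^{-r}v^0w^0z^{-s}$ finishes the argument uniformly, with no odd-parity case distinction and no sign chase beyond that in (\ref{61b}). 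So the missing idea in your proposal is to descend to the $t,t'$ description to prove (\ref{exeee}); the Jacobi route alone cannot close the odd case.
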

\begin{proof}
It suffices to prove the following special case of (\ref{eeee}) when $n=4$, while the general cases and (\ref{ffff}) can be achieved by applying the maps $\psi$ and $\zeta_{M|N}$:
\begin{equation}\label{e4e}
\big[\,[E_{1;i,f_1}^{(r)},E_{2;f_2,j}^{(1)}]\,,\,[E_{2;h,g_1}^{(1)},E_{3;g_2,k}^{(s)}]\,\big]=0.
\end{equation}
We claim that for all $1\leq i\leq \mu_1$, $1\leq h\leq \mu_2$, $1\leq j\leq \mu_3$, $1\leq k\leq \mu_4$, we have\begin{equation}\label{exeee}
[E_{1,3;i,j}(u)\,,\,E_{2;h,q}(v)E_{3;q,k}(v)-E_{2,4;h,k}(v)\,]=0,
\end{equation}
where the index $q$ is summed over $1,\ldots,\mu_3$.

To prove (\ref{exeee}), we multiply the matrix equalities (\ref{T=FDE}) and (\ref{Tp=FDE}) associated to the composition $(\mu_1,\mu_2, \mu_3,\mu_4)$ and derive the following identities.
\begin{eqnarray}
E_{1,3;i,j}(u)&=&D^{\prime}_{1;i,p}(u)t_{p,\mu_1+\mu_2+j}(u),\notag\\
E_{2;h,q}(v)E_{3;q,k}(v)-E_{2,4;h,k}(v)&=&t^{\prime}_{\mu_1+h,\mu_1+\mu_2+\mu_3+r}(v)D_{4;r,k}(v)\notag,
\end{eqnarray}
for all $1\le i\le\mu_1$, $1\le j\le\mu_3$, $1\le h\le\mu_2$, $1\le k\le\mu_4$, and the indices $p$, $q$, $r$ are summed from 1 to $\mu_1$, $\mu_3$, $\mu_4$, respectively. Substituting these identities into the bracket in (\ref{exeee}) and setting a notation $n_a:=\mu_1+\mu_2+\ldots+\mu_a$ for short, we have
\begin{align*}
&[E_{1,3;i,j}(u),E_{2;h,q}(v)E_{3;q,k}(v)-E_{2,4;h,k}(v)]\\
&=[D^{\prime}_{1;i,p}(u)t_{p,n_2+j}(u),t^{\prime}_{\mu_1+h,n_3+r}(v)D_{4;r,k}(v)]\\
&=D^{\prime}_{1;i,p}(u)t_{p,n_2+j}(u)t^{\prime}_{\mu_1+h,n_3+r}(v)D_{4;r,k}(v)\\
&\qquad\qquad\qquad-(-1)^{(\pa{i}_1+\pa{j}_3)(\pa{h}_2+\pa{k}_4)}t^{\prime}_{\mu_1+h,n_3+r}(v)D_{4;r,k}(v)D^{\prime}_{1;i,p}(u)t_{p,n_2+j}(u)\\
&=D^{\prime}_{1;i,p}(u)t_{p,n_2+j}(u)t^{\prime}_{\mu_1+h,n_3+r}(v)D_{4;r,k}(v)\\
&\qquad\qquad\qquad-(-1)^{(\pa{h}_2+\pa{r}_4)(\pa{p}_1+\pa{j}_3)}D^{\prime}_{1;i,p}(u)t^{\prime}_{\mu_1+h,n_3+r}(v)t_{p,n_2+j}(u)D_{4;r,k}(v)\\
&=D^{\prime}_{1;i,p}(u)[t_{p,n_2+j}(u),
t^{\prime}_{\mu_1+h,n_3+r}(v)]D_{4;r,k}(v)=0,
\end{align*}
and (\ref{exeee}) follows.
Note that in the above computation we have used the facts that
\[
D_{1;i,j}(u)=t_{ij}(u) \qquad \text{and} \qquad D^{\prime}_{4;i,j}(u)=t^{\prime}_{n_3+i,n_3+j}(u),
\]
therefore 
$[D_{1;i,j}(u),t^{\prime}_{\mu_1+h,n_3+k}(v)]=0$
and $[D^{\prime}_{4;i,j}(u),t_{h,n_2+k}(v)]=0$ \;by (\ref{usefull}).

To show (\ref{e4e}), by (\ref{61b}), we may assume that $f_1=f_2=f$ and $g_1=g_2=g$. 
Computing the following bracket by (\ref{61b}), we have
\begin{align*}
(u&-v)(w-z)\big[\,[E_{1;i,f}(u),E_{2;f,j}(v)]\,,\,[E_{2;h,g}(w),E_{3;g,k}(z)]\,\big]\notag\\
&=\big[\,(-1)^{\pa{f}_2}E_{1;i,q}(u)E_{2;q,k}(v)-E_{1;i,q}(v)E_{2;q,k}(v)+E_{1,3;i,k}(v)-E_{1,3;i,k}(u),\notag\\
&\qquad (-1)^{\pa{g}_3}E_{2;h,p}(w)E_{3;p,k}(z)-E_{2;h,p}(z)E_{3;p,k}(z)+E_{2,4;h,k}(z)-E_{2,4;h,k}(w)\,\big].\notag
\end{align*}
Taking its coefficient of $u^{-r}z^{-s}v^0w^0$, we have
\[
(-1)^{\pa{f}_2+\pa{g}_3}\sum_{t=1}^{s-1}[E_{1,3;i,j}^{(r)}\,,\,E_{2;h,p}^{(s-t)}E_{3;p,k}^{(t)}-E_{2,4;h,k}^{(s)}],
\]
which equals to the coefficient of $u^{-r}z^{-s}$ in 
\[
(-1)^{\pa{f}_2+\pa{g}_3}[E_{1,3;i,j}(u)\,,\,E_{2;h,p}(z)E_{3;p,k}(z)-E_{2,4;h,k}(z)],
\] 
which is zero by (\ref{exeee}).
Finally, the coefficient of $u^{-r}z^{-s}v^0w^0$ in
\begin{equation*}
(u-v)(w-z)\big[\,[E_{1;i,f}(u),E_{2;f,j}(v)]\,,\,[E_{2;h,g}(w),E_{3;g,k}(z)]\,\big]
\end{equation*}
is precisely $-\big[\,[E_{1;i,f}^{(r)},E_{2;f,j}^{(1)}]\,,\,[E_{2;h,g}^{(1)},E_{3;g,k}^{(s)}]\,\big]$ and (\ref{e4e}) follows.
\end{proof}

\section{The general Case}

Recall that our goal is to obtain the defining relations of $Y_\mu(\so)=Y_{M|N}$ in terms of the parabolic generators $\lbrace D_{a;i,j}^{(r)}, D_{a;i,j}^{\prime'(r)}\rbrace$, $\lbrace E_{a;i,j}^{(r)}\rbrace$, and $\lbrace F_{a;i,j}^{(r)}\rbrace$ associated to an arbitrary fixed composition $\mu$ of $M+N$ and an arbitrary fixed $0^M1^N$-sequence $\so$. The following proposition summarizes the results that we have established earlier.
\begin{proposition}\label{srlns}
The following relations hold in $Y_\mu(\so)$:
\begin{eqnarray}
\label{p701}D_{a;i,j}^{(0)}&=&\delta_{ij}\,,\\
\label{p702}\sum_{p=1}^{\mu_a}\sum_{t=0}^{r}D_{a;i,p}^{(t)}D_{a;p,j}^{\prime (r-t)}&=&\delta_{r0}\delta_{ij}\,,\\
\big[D_{a;i,j}^{(r)},D_{b;h,k}^{(s)}\big]&=&
    \delta_{ab}(-1)^{\pa{i}_a\pa{j}_a+\pa{i}_a\pa{h}_a+\pa{j}_a\pa{h}_a}\times \notag\\
    &&\sum_{t=0}^{min(r,s)-1}\big(D_{a;h,j}^{(t)}D_{a;i,k}^{(r+s-1-t)}-D_{a;h,j}^{(r+s-1-t)}D_{a;i,k}^{(t)}\big),
\end{eqnarray}
{\allowdisplaybreaks
\begin{multline}\label{p704}
 [D_{a;i,j}^{(r)}, E_{b;h,k}^{(s)}]
        =\delta_{a,b}\delta_{hj}(-1)^{\pa{h}_a\pa{j}_a}\sum_{p=1}^{\mu_a}\sum_{t=0}^{r-1} D_{a;i,p}^{(t)} E_{b;p,k}^{(r+s-1-t)}\\
        -\delta_{a,b+1}(-1)^{\pa{h}_b\pa{k}_a+\pa{h}_b\pa{j}_a+\pa{j}_a\pa{k}_a} \sum_{t=0}^{r-1} D_{a;i,k}^{(t)} E_{b;h,j}^{(r+s-1-t)},
        \end{multline}
\begin{multline}\label{p705}
 [D_{a;i,j}^{(r)}, F_{b;h,k}^{(s)}]
        =\delta_{a,b}(-1)^{\pa{i}_a\pa{j}_a+\pa{h}_{a+1}\pa{i}_a+\pa{h}_{a+1}\pa{j}_a}\sum_{p=1}^{\mu_a}\sum_{t=0}^{r-1} F_{b;h,p}^{(r+s-1-t)}D_{a;p,j}^{(t)}\\
        +\delta_{a,b+1}(-1)^{\pa{h}_a\pa{k}_b+\pa{h}_a\pa{j}_a+\pa{j}_a\pa{k}_b} \sum_{t=0}^{r-1} F_{b;i,k}^{(r+s-1-t)}D_{a;h,j}^{(t)},
        \end{multline}        
\begin{multline}\label{p706}
 [E_{a;i,j}^{(r)} , F_{b;h,k}^{(s)}]
          =\delta_{a,b}(-1)^{\pa{h}_{a+1}\pa{k}_a+\pa{j}_{a+1}\pa{k}_a+\pa{h}_{a+1}\pa{j}_{a+1}+1}
          \sum_{t=0}^{r+s-1} D_{a;i,k}^{\prime (r+s-1-t)} D_{a+1;h,j}^{(t)},       
          \end{multline}            
\begin{multline}\label{p707}
 [E_{a;i,j}^{(r)} , E_{a;h,k}^{(s)}]
          =(-1)^{\pa{h}_{a}\pa{j}_{a+1}+\pa{j}_{a+1}\pa{k}_{a+1}+\pa{h}_{a}\pa{k}_{a+1}}\times\\
          \big( \sum_{t=1}^{s-1} E_{a;i,k}^{(r+s-1-t)} E_{a;h,j}^{(t)} 
          -\sum_{t=1}^{r-1} E_{a;i,k}^{(r+s-1-t)} E_{a;h,j}^{(t)}  \big),       
\end{multline} 
\begin{multline}\label{p708}
 [F_{a;i,j}^{(r)} , F_{a;h,k}^{(s)}]
          =(-1)^{\pa{h}_{a+1}\pa{j}_{a}+\pa{j}_{a}\pa{k}_{a}+\pa{h}_{a+1}\pa{k}_{a}}\times\\
          \big( \sum_{t=1}^{r-1} F_{a;i,k}^{(r+s-1-t)} F_{a;h,j}^{(t)} 
          -\sum_{t=1}^{s-1} F_{a;i,k}^{(r+s-1-t)} F_{a;h,j}^{(t)}  \big),         
 \end{multline}

\begin{equation}
\label{p709}[E_{a;i,j}^{(r+1)}, E_{a+1;h,k}^{(s)}]-[E_{a;i,j}^{(r)}, E_{a+1;h,k}^{(s+1)}]
=(-1)^{\pa{j}_{a+1}\pa{h}_{a+1}}\delta_{h,j}\sum_{q=1}^{\mu_{a+1}}E_{a;i,q}^{(r)}E_{a+1;q,k}^{(s)}\,,
\end{equation}

\begin{multline}
\label{p710}[F_{a;i,j}^{(r+1)}, F_{a+1;h,k}^{(s)}]-[F_{a;i,j}^{(r)}, F_{a+1;h,k}^{(s+1)}]=\\
(-1)^{\pa{i}_{a+1}(\pa{j}_{a}+\pa{h}_{a+2})+\pa{j}_a\pa{h}_{a+2}+1}\delta_{i,k}\sum_{q=1}^{\mu_{a+1}}F_{a+1;h,q}^{(s)}F_{a;q,j}^{(r)}\,,
\end{multline}

\begin{align}
\label{p711}&[E_{a;i,j}^{(r)}, E_{b;h,k}^{(s)}] = 0
\qquad\qquad\text{\;\;if\;\; $|b-a|>1$ \;\;or\;\; \;if\;\;$b=a+1$ and $h \neq j$},\\[3mm]
\label{p712}&[F_{a;i,j}^{(r)}, F_{b;h,k}^{(s)}] = 0
\qquad\qquad\text{\;\;if\;\; $|b-a|>1$ \;\;or\;\; \;if\;\;$b=a+1$ and $i \neq k$},\\[3mm]
\label{p713}&\big[E_{a;i,j}^{(r)},[E_{a;h,k}^{(s)},E_{b;f,g}^{(\ell)}]\big]+
\big[E_{a;i,j}^{(s)},[E_{a;h,k}^{(r)},E_{b;f,g}^{(\ell)}]\big]=0 \quad \text{if}\,\,\, |a-b|\geq 1,\\[3mm]
\label{p714}&\big[F_{a;i,j}^{(r)},[F_{a;h,k}^{(s)},F_{b;f,g}^{(\ell)}]\big]+
\big[F_{a;i,j}^{(s)},[F_{a;h,k}^{(r)},F_{b;f,g}^{(\ell)}]\big]=0 \quad \text{if}\,\,\, |a-b|\geq 1,\\[3mm]
\label{p715}&\big[\,[E_{a;i,f_1}^{(r)},E_{a+1;f_2,j}^{(1)}]\,,\,[E_{a+1;h,g_1}^{(1)},E_{a+2;g_2,k}^{(s)}]\,\big]=0 \;\;\text{when\;\;}  n\geq 4\, \text{and\;\;} \pa{h}_{a+1}+\pa{j}_{a+2}=1,\\[3mm]
\label{p716}&\big[\,[F_{a;i,f_1}^{(r)},F_{a+1;f_2,j}^{(1)}]\,,\,[F_{a+1;h,g_1}^{(1)},F_{a+2;g_2,k}^{(s)}]\,\big]=0 \;\;\text{when\;\;} n\geq 4\, \text{and\;\;} \pa{j}_{a+1}+\pa{h}_{a+2}=1. 
\end{align}}
If $D_{a;i,j}^{(r)}$ appears on the left-hand side of the equation, then it holds for all $1\leq i,j\leq \mu_{a}$ and all $r\geq 0$; if $E_{a;h,k}^{(s)}$ appears on the left-hand side of the equation, then it holds for all $1\leq h\leq \mu_a, 1\leq k\leq \mu_{a+1}$ and all $s\geq 1$; if $F_{a;f,g}^{(\ell)}$ appears on the left-hand side of the equation, then it holds for all $1\leq g\leq \mu_a, 1\leq f\leq \mu_{a+1}$ and all $\ell\geq 1$.
\end{proposition}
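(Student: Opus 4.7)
The plan is to treat Proposition~\ref{srlns} as a dictionary: each relation is either quoted from an earlier result, extracted from a series identity of Section~5 or Section~6, or obtained from such an identity by transporting along one of the homomorphisms in Proposition~4.1. Relations (\ref{p701})--(\ref{p702}) are immediate from $t^{(0)}_{ij}=\delta_{ij}$ and from the matrix identity $D_a(u)D_a'(u)=I$, while (7.3) is literally Proposition~\ref{dd0}.

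For the two-block relations (\ref{p704})--(\ref{p708}), I would first extract coefficients of $u^{-r}v^{-s}$ from the nine series identities of Proposition~\ref{n=2} (holding in $Y_{(\mu_a,\mu_{a+1})}(\so_a\so_{a+1})$ or $Y_{(\mu_{a-1},\mu_a)}(\so_{a-1}\so_a)$), multiplying through by $(u-v)^{-1}$ in the standard way and comparing coefficients of each monomial. This gives (7.4)--(7.8) in the case where the indices $a,b$ are adjacent, but only for the "first block" of the ambient super Yangian. To pass to arbitrary $a$ in $Y_\mu$ I apply the embedding $\psi_{\mu_1+\cdots+\mu_{a-1}}$ of Proposition~4.1(4); by the key formulas (\ref{psid})--(\ref{psif}), this homomorphism sends the $D_1, E_1, F_1$ generators to $D_a, E_a, F_a$, so the extracted identity transports verbatim (with the restricted parities $|i|_a$ automatically replacing $|i|_1$ by (\ref{respa})). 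For the purely disjoint-block relations, namely (\ref{p704})--(\ref{p705}) when $a\notin\{b,b+1\}$, (\ref{p706}) when $a\neq b$, and (\ref{p711})--(\ref{p712}) when $|b-a|>1$, I invoke Lemma~\ref{corcommute} directly: the two subalgebras in question sit in disjoint northwestern/southeastern corners of the ambient matrix $T(u)$, hence supercommute.

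For the cross-block $EE$ and $FF$ relations (\ref{p709})--(\ref{p710}), I would extract the coefficient of $u^{-r}v^{-s}$ from the series identity (\ref{61b}) of Lemma~\ref{3be} (respectively (\ref{62b}) of Lemma~\ref{3bf}) in $Y_{(\mu_a,\mu_{a+1},\mu_{a+2})}$, then push forward via $\psi_{\mu_1+\cdots+\mu_{a-1}}$ to reach arbitrary $a$; the telescoping structure $u-v$ on the left-hand side produces exactly the difference of brackets on the left of (\ref{p709}). The vanishing relations (\ref{p711})--(\ref{p712}) in the near-adjacent case $b=a+1$ with $h\neq j$ (or $i\neq k$) follow similarly from the $\delta_{hj}$ in (\ref{61b}), or alternatively from (\ref{61a}) and (\ref{62a}). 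The cubic Serre relations (\ref{p713})--(\ref{p714}) are assembled from the four identities of Lemma~\ref{3ef0}, again by extracting coefficients and applying $\psi_L$; the super Jacobi identity then recasts the nested-bracket form into the symmetric $r\leftrightarrow s$ form written in the proposition. The quartic super Serre relations (\ref{p715})--(\ref{p716}) come directly from Lemma~\ref{extra} after applying $\psi_{\mu_1+\cdots+\mu_{a-1}}$; the parity condition $\pa{h}_{a+1}+\pa{j}_{a+2}=1$ simply singles out those quartic relations that are genuinely "new" as defining relations, since for the even parities they are consequences of the cubic Serre relations (\ref{p713})--(\ref{p714}).

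The main obstacle is not conceptual but bookkeeping: at every stage one must carefully check that the sign exponents produced by extracting coefficients and by pushing along $\psi_L$ or $\zeta_{M|N}$ match the exponents written in Proposition~\ref{srlns}. In particular, $\zeta_{M|N}$ reverses the composition $\mu\mapsto\overleftarrow{\mu}$ and the 01-sequence $\so\mapsto\sd$ (see Proposition~\ref{zetadef}), so converting a statement about $E$'s into the corresponding statement about $F$'s via $\zeta_{M|N}$ requires substituting $i\mapsto \mu_a+1-i$, $j\mapsto\mu_{a+1}+1-j$ and using $\pa{\mu_a+1-i}_{n+1-a}$ in place of $\pa{i}_a$. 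When the composition $\mu$ admits blocks of mixed parity, as it does in general here, the signs do not simplify as they did in \cite{Go, Pe1}, and one must verify each formula individually. Once these sign matches are confirmed, the proposition follows by combining the special-case results of Sections~5 and 6 with the four transport principles of Section~4.
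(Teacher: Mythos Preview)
Your proposal is correct and follows essentially the same strategy as the paper: invoke Proposition~\ref{dd0} for the first three relations, establish the remaining ones as series identities in the small Yangians $Y_{(\mu_a,\mu_{a+1})}$ or $Y_{(\mu_a,\mu_{a+1},\mu_{a+2})}$ via Proposition~\ref{n=2} and Lemmas~\ref{3be}--\ref{extra}, transport them to $Y_\mu$ along $\psi_{\mu_1+\cdots+\mu_{a-1}}$, and extract coefficients using the identity $\frac{S(v)-S(u)}{u-v}=\sum_{r,s\ge 1}S^{(r+s-1)}u^{-r}v^{-s}$. The only additional detail you supply beyond the paper's terse proof is the explicit appeal to Lemma~\ref{corcommute} for the far-apart vanishing cases, which is indeed the intended mechanism.
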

\begin{proof}
The first three relations follow from Proposition \ref{dd0}. By Proposition~\ref{n=2} and Lemma \ref{3be}--Lemma \ref{extra}, one can show that the identities hold in smaller Yangians, for example, $Y_{(\mu_2,\mu_3)}$. Then we apply the injective homomorphisms $\psi_{p_1|q_1}=\psi_{\mu_1}$ to these identities so that the corresponding identities also hold in bigger Yangians, for example, $Y_{(\mu_1,\mu_2,\mu_3)}$. Repeating this process and we may eventually deduce that all these relations, in series forms, hold in $Y_{\mu}$.

Finally, the following identity converts the relations from series form into the desired form:
\[
\dfrac{S(v)-S(u)}{u-v}=\sum_{r,s\ge 1}S^{(r+s-1)}u^{-r}v^{-s},
 \]
for any formal series $S(u)=\sum_{r\ge 0}S^{(r)}u^{-r}$.
\end{proof}

Our main theorem is that the above relations are enough for a set of defining relations of $Y_\mu(\so)$.
\begin{theorem}\label{Pg} 
Let $\mu=(\mu_1,\ldots,\mu_n)$ be a composition of $M+N$ and $\so$ be a $0^M1^N$-sequence. Associated to this $\mu$ and $\so$, the super Yangian $Y_\mu(\so)$ is generated by the parabolic generators
\begin{align*}
&\lbrace D_{a;i,j}^{(r)}, D_{a;i,j}^{\prime(r)} \,|\, 1\leq a\leq n, 1\leq i,j\leq \mu_a, r\geq 0\rbrace,\\
&\lbrace E_{a;i,j}^{(r)} \,|\, 1\leq a< n, 1\leq i\leq \mu_a, 1\leq j\leq\mu_{a+1}, r\geq 1\rbrace,\\
&\lbrace F_{a;i,j}^{(r)} \,|\, 1\leq a< n, 1\leq i\leq\mu_{a+1}, 1\leq j\leq \mu_a, r\geq 1\rbrace,
\end{align*}
subject only to the relations (\ref{p701})$-$(\ref{p716}).
\end{theorem}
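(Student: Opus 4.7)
The plan is to follow the standard strategy for presentation theorems of this type: introduce the abstract superalgebra $\widehat Y_\mu(\so)$ defined by the generators listed in the theorem subject only to the relations (\ref{p701})--(\ref{p716}), observe that Proposition \ref{srlns} guarantees a surjective superalgebra homomorphism $\pi:\widehat Y_\mu(\so)\twoheadrightarrow Y_\mu(\so)$, and show $\pi$ is injective by a PBW-style spanning argument compared against Proposition \ref{PBWSY} and Corollary \ref{Yloop}. To set up the comparison, I install the loop-type filtration on $\widehat Y_\mu(\so)$ by declaring every generator with superscript $(r)$ to have degree $r-1$, and note that $\pi$ is filtered.

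The next step is to reconstruct the "long'' elements inside $\widehat Y_\mu(\so)$. Namely, for $b-a>1$ I would define $\widehat E_{a,b;i,j}^{(r)}$ and $\widehat F_{b,a;i,j}^{(r)}$ by the recursion (\ref{ter}) used as a definition, and in parallel define $\widehat E_{a;i,j}^{(r)}$, $\widehat F_{a;i,j}^{(r)}$ for all $r$ by iterating (\ref{p709}), (\ref{p710}) starting from the $r=1$ generators. Using the relations (\ref{p703})--(\ref{p714}) together with the super Serre relations (\ref{p715}), (\ref{p716}), I would verify that these extended elements are independent of the auxiliary index used in (\ref{ter}), that they supercommute with the correct blocks by an analogue of Lemma~\ref{corcommute}, and that under $\pi$ they are mapped to the quasideterminantal elements of Proposition~\ref{quasi}. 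This mirrors the reconstruction Brundan--Kleshchev perform in \cite{BK1} and the corresponding analyses in \cite{Go, Pe1}, but now with the sign bookkeeping dictated by (\ref{pad})--(\ref{paf}).

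Third, I would prove a spanning result: every element of $\widehat Y_\mu(\so)$ can be written as a $\C$-linear combination of ordered supermonomials in which all $F$-type (long and short) generators precede all $D,D'$-generators, which in turn precede all $E$-type generators, with a fixed ordering within each block. The reordering is carried out by induction on total loop degree, using (\ref{p703})--(\ref{p708}) to order within each block, (\ref{p704})--(\ref{p706}) to move $D$'s through $E$'s and $F$'s, (\ref{p711})--(\ref{p714}) for cross-block straightening, and (\ref{p715})--(\ref{p716}) to kill the higher order quartic relations arising in the associated graded. Under the surjection $\pi$, these ordered supermonomials map to the RTT-ordered supermonomials coming from the Gauss decomposition (\ref{T=FDE}) together with (\ref{ter}), which by Proposition~\ref{PBWSY} and Corollary~\ref{Yloop} form a basis of $Y_\mu(\so)$. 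Hence the spanning set must already be a basis of $\widehat Y_\mu(\so)$, forcing $\pi$ to be an isomorphism.

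The hard part will be step three, and within it specifically the verification that (\ref{p715}) and (\ref{p716}) are \emph{sufficient} to straighten all quartic obstructions, even though they are only asserted when $\pa{h}_{a+1}+\pa{j}_{a+2}=1$ (respectively $\pa{j}_{a+1}+\pa{h}_{a+2}=1$); in the complementary parity range one must show the analogous quartic bracket already vanishes modulo lower loop degree as a consequence of (\ref{p707}), (\ref{p708}), (\ref{p713}), (\ref{p714}) together with the commutation relations coming from Lemma~\ref{3be}. A secondary technical point is keeping the sign factors in the straightening identities consistent: since an even generator and an odd generator can now coexist in one block, the signs must be tracked in every reordering, which is the superalgebra analogue of the argument in \cite[Section 7]{BK1}. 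Once the spanning is established, the dimension match via the associated graded is immediate from Corollary~\ref{Yloop} since both $\gr \widehat Y_\mu(\so)$ and $\gr Y_\mu(\so)$ surject onto $U(\gl_{M|N}[x])$ with the same generating set.
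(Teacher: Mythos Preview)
Your proposal is correct and follows essentially the same strategy as the paper: define the abstract algebra $\widehat Y_\mu$, obtain the surjection from Proposition~\ref{srlns}, pass to the loop-filtered associated graded, and prove injectivity by showing ordered supermonomials in the $D$'s, long $E$'s, and long $F$'s span $\widehat Y_\mu$ while their images are independent via the identification $\gr Y_\mu\cong U(\gl_{M|N}[x])$. You have also correctly isolated the one genuinely delicate point, namely that the quartic super Serre relations (\ref{p715})--(\ref{p716}) are imposed only when the middle parity is odd, and that in the complementary (even) case the needed vanishing in $\gr\widehat Y_\mu^+$ follows from (\ref{p713}) together with the super Jacobi identity; this is exactly how the paper handles it in the proof of (\ref{L723}).

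One small correction: there is no need to ``define $\widehat E_{a;i,j}^{(r)}$ for all $r$ by iterating (\ref{p709}) starting from the $r=1$ generators''---the elements $E_{a;i,j}^{(r)}$ for all $r\geq 1$ are already among the abstract generators, and (\ref{p709}) is a relation, not a recursive definition. Only the long elements $\widehat E_{a,b;i,j}^{(r)}$ with $b>a+1$ need to be defined inside $\widehat Y_\mu$, and for those (\ref{ter}) alone suffices.
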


The remaining part of this article is devoted to the proof of our main theorem, which is built on several technical propositions and lemmas.

Let $\widehat{Y}_{\mu}$ denote the abstract superalgebra generated by the elements and relations as in the statement of Theorem~\ref{Pg}, where the parities of the generator are given explicitly by (\ref{pad})-(\ref{paf}). We may further define all the other $E_{a,b;i,j}^{(r)}$ and $F_{b,a;i,j}^{(r)}$ in $\widehat{Y}_{\mu}$ by the relations (\ref{ter}), and it is straightforward to check that these definitions are independent of the choices of $k$ as in \cite[p.22]{BK1}. Let $\Gamma$ be the map
\[
\Gamma: \widehat{Y}_{\mu}\longrightarrow Y_{\mu}
\]
sending every element in $\widehat{Y}_{\mu}$ into the element in $Y_{\mu}$ with the same notation. By Theorem~\ref{gendef} and Proposition~\ref{srlns}, the map $\Gamma$ is a surjective superalgebra homomorphism. Therefore, it remains to prove that $\Gamma$ is injective.

The injectivity of $\Gamma$ is proved similar to the arguments in \cite{BK1, Go, Pe1}. We first find a spanning set for $\widehat{Y}_{\mu}$, and then show that the image of this spanning set under $\Gamma$ is linearly independent in $Y_{\mu}$.

Let $\widehat{Y}^0_{\mu}$ (respectively, $\widehat{Y}^+_{\mu}$, $\widehat{Y}^-_{\mu}$) denote the subalgebras of
$\widehat{Y}_{\mu}$ generated by the elements $\lbrace D_{a;i,j}^{(r)}\rbrace$ (respectively, $\lbrace E_{a,b;i,j}^{(r)}\rbrace$, $\lbrace F_{b,a;i,j}^{(r)}\rbrace$).
Define a filtration on $\widehat{Y}_{\mu}$ (on $\widehat{Y}^0_{\mu}$, $\widehat{Y}^+_{\mu}$ and $\widehat{Y}_{\mu}^-$ as well) by setting
\begin{equation*}
\text{deg}(D_{a;i,j}^{(r)})=\text{deg}(E_{a,b;i,j}^{(r)})=\text{deg}(F_{b,a;i,j}^{(r)})=r-1,\qquad\text{for all}\;\; r\ge 1,
\end{equation*}
and denote the associated graded superalgebra by $\gr\widehat{Y}_{\mu}$.
Let $\ovl{E}_{a,b;i,j}^{(r)}$ denote the image of $E_{a,b;i,j}^{(r)}$ in the graded superalgebra $\gr_{r-1}\widehat{Y}_{\mu}^+$.

\begin{lemma} The following identities hold in $\gr\widehat{Y}_{\mu}^+$ for all $r,s,t\geq 1$:
 \begin{enumerate}
  \item[(a)] \begin{equation}\label{L717}
              [\ovl{E}_{a,a+1;i,j}^{(r)},\ovl{E}_{b,b+1;h,k}^{(s)}]=0,\;\text{if}\;|a-b|\ne 1,
             \end{equation}
  \item[(b)] \begin{equation}\label{L718}
              [\ovl{E}_{a,a+1;i,j}^{(r+1)},\ovl{E}_{b,b+1;h,k}^{(s)}]=
              [\ovl{E}_{a,a+1;i,j}^{(r)},\ovl{E}_{b,b+1;h,k}^{(s+1)}],\; \text{if}\; |a-b|=1,
             \end{equation}
  \item[(c)] \begin{equation}\label{L719}
              \big[\ovl{E}_{a,a+1;i,j}^{(r)},[\ovl{E}_{a,a+1;h,k}^{(s)},\ovl{E}_{b,b+1;f,g}^{(t)}]\big]=
              -\big[\ovl{E}_{a,a+1;i,j}^{(s)},[\ovl{E}_{a,a+1;h,k}^{(r)},\ovl{E}_{b,b+1;f,g}^{(t)}]\big],
             \end{equation}
             \; \text{if}\; $|a-b|=1$,
  \item[(d)] \begin{multline}\label{L720}
              \ovl{E}_{a,b;i,j}^{(r)}=(-1)^{\pa{h}_{b-1}}[\ovl{E}_{a,b-1;i,h}^{(r)},\ovl{E}_{b-1,b;h,j}^{(1)}]
              =(-1)^{\pa{k}_{a+1}}[\ovl{E}_{a,a+1;i,k}^{(1)},\ovl{E}_{a+1,b;k,j}^{(r)}],
              \end{multline}
              for all $b>a+1$ and any $1\leq h\leq\mu_{b-1}$, $1\leq k\leq\mu_{a+1}$.
 \end{enumerate}
Here (\ref{L717}) and (\ref{L718}) hold for all $1\leq i\leq \mu_a$, $1\leq j\leq \mu_{a+1}$, $1\leq h\leq \mu_b$, $1\leq k\leq \mu_{b+1}$; (\ref{L719}) holds for all $1\leq i,h\leq \mu_a$, $1\leq j,k\leq \mu_{a+1}$, $1\leq f\leq \mu_b$, $1\leq g\leq \mu_{b+1}$; (\ref{L720}) holds for all $1\leq i\leq \mu_a$, $1\leq j\leq \mu_b$. 
\end{lemma}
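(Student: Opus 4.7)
The plan is to derive each of the four identities by descending the corresponding relation of Proposition~\ref{srlns} to the associated graded superalgebra $\gr\widehat{Y}_\mu^+$ via a filtration-degree count.

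For part (a), when $|a-b|>1$ the relation (\ref{p711}) gives a vanishing commutator already in $\widehat{Y}_\mu$, which descends trivially. For the same-block case $a=b$, I use (\ref{p707}): each summand on its right-hand side is a product $E_a^{(r+s-1-t)}E_a^{(t)}$ of filtration degree $(r+s-1-t-1)+(t-1)=r+s-3$, so the commutator on the left sits in $F_{r+s-3}\widehat{Y}_\mu^+$ and projects to zero in $\gr_{r+s-2}$. Part (b) proceeds in parallel: (\ref{p709}) equates a difference that naturally lives in $F_{r+s-1}$ to a right-hand side of filtration degree $r+s-2$, so this difference vanishes in $\gr_{r+s-1}$, yielding the $b=a+1$ case; the $b=a-1$ case reduces to this by applying (\ref{p709}) to the $(a-1,a)$ pair and invoking super-antisymmetry of the bracket. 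Part (c) is immediate from (\ref{p713}): both summands there already have joint filtration degree $r+s+\ell-3$, and the relation descends verbatim.

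Part (d) is the main task. The first equality is just the definition (\ref{ter}) in $\widehat{Y}_\mu$, which descends to $\gr\widehat{Y}_\mu^+$ (and is independent of $h$). For the second equality I induct on $b-a$. Base case $b=a+2$: set $h=k$ in the first equality and iterate the shift identity (b) precisely $r-1$ times to migrate the superscript $r$ from the first to the second argument of the inner bracket, keeping the sign $(-1)^{\pa{k}_{a+1}}$ intact. Inductive step $b\geq a+3$: starting from $\ovl{E}_{a,b;i,j}^{(r)}=(-1)^{\pa{h}_{b-1}}[\ovl{E}_{a,b-1;i,h}^{(r)},\ovl{E}_{b-1,b;h,j}^{(1)}]$, substitute the inductive hypothesis for $\ovl{E}_{a,b-1;i,h}^{(r)}$ and apply super-Jacobi to $[[A,B],C]$ with $A=\ovl{E}_{a,a+1;i,k}^{(1)}$, $B=\ovl{E}_{a+1,b-1;k,h}^{(r)}$, $C=\ovl{E}_{b-1,b;h,j}^{(1)}$. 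The crux is that $[A,C]$ vanishes by part (a) because the block-gap is $|a-(b-1)|=b-a-1\geq 2$, so the Jacobi identity collapses to $[[A,B],C]=[A,[B,C]]$; the inner bracket $[B,C]$ then repackages via the first equality at level $(a+1,b)$ into $(-1)^{\pa{h}_{b-1}}\ovl{E}_{a+1,b;k,j}^{(r)}$. The two copies of $(-1)^{\pa{h}_{b-1}}$ cancel, producing exactly $(-1)^{\pa{k}_{a+1}}[\ovl{E}_{a,a+1;i,k}^{(1)},\ovl{E}_{a+1,b;k,j}^{(r)}]$ as required.

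The main obstacle is the sign bookkeeping in the Jacobi step of part (d) together with verifying the gap condition: the awkward second Jacobi term $(-1)^{|A||B|}[B,[A,C]]$ is killed by part (a) precisely when $b\geq a+3$, isolating $b=a+2$ as the only case where the ladder shift (b) must be invoked directly. Once this dichotomy is isolated, all the signs collapse cleanly to $(-1)^{\pa{k}_{a+1}}$ without further bookkeeping.
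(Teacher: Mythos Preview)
Your proof is correct and follows essentially the same approach as the paper's: parts (a)--(c) are obtained by passing the relations (\ref{p707}), (\ref{p709}), (\ref{p711}), (\ref{p713}) to the associated graded via a degree count, and part (d) is deduced from (\ref{ter}) together with (\ref{L718}), the super Jacobi identity, and induction on $b-a$. Your write-up is in fact more explicit than the paper's terse proof---in particular you correctly isolate (\ref{p707}) for the $a=b$ case of (a), and you cleanly separate the base case $b=a+2$ (where (\ref{L718}) shifts the superscript) from the inductive step $b\geq a+3$ (where the Jacobi cross-term $[A,C]$ dies by part (a)).
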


\begin{proof}
(\ref{L717}) and (\ref{L718}) follow from (\ref{p711}) and (\ref{p709}), while (\ref{L719}) follows from (\ref{p713}). The first equality of (\ref{L720}) follows from (\ref{ter}), while the second one can be deduced from the first equality by super Jacobi identity, (\ref{L718}) and induction on $b-a$.
\end{proof}

\begin{lemma}
The following identities hold in $\gr\widehat{Y}_{\mu}^+$ for all $r,s\geq 1$:
 \begin{enumerate}
  \item[(a)]
    \begin{equation}\label{LL725}
     [\ovl{E}_{a,a+2;i,j}^{(r)},\ovl{E}_{a+1,a+2;h,k}^{(s)}]=0,\;\; \text{for all}\;\; 1\leq a\leq n-2,
    \end{equation}
  \item[(b)]
    \begin{equation}\label{LL726}
     [\ovl{E}_{a,a+1;i,j}^{(r)},\ovl{E}_{a,a+2;h,k}^{(s)}]=0,\;\; \text{for all}\;\; 1\leq a\leq n-2,
    \end{equation}
  \item[(c)]
    \begin{equation}\label{L723}
     [\ovl{E}_{a,a+2;i,j}^{(r)},\ovl{E}_{a+1,a+3;h,k}^{(s)}]=0,\;\; \text{for all}\;\; 1\leq a\leq n-3,
    \end{equation}
  \item[(d)]
    \begin{equation}\label{L724}
     [\ovl{E}_{a,b;i,j}^{(r)},\ovl{E}_{c,c+1;h,k}^{(s)}]=0,\;\; \text{for all}\;\; 1\leq a<c<b\leq n.
    \end{equation}
 \end{enumerate}
Here (\ref{LL725}) holds for all $1\leq i\leq \mu_a$, $1\leq h\leq \mu_{a+1}$, $1\leq j,k\leq \mu_{a+2}$; (\ref{LL726}) holds for all $1\leq i,h\leq \mu_a$, $1\leq j\leq \mu_{a+1}$, $1\leq j,k\leq \mu_{a+2}$; (\ref{L723}) holds for all $1\leq i\leq \mu_a$, $1\leq h\leq \mu_{a+1}$, $1\leq j\leq \mu_{a+2}$, $1\leq k\leq \mu_{a+3}$; (\ref{L724}) holds for all $1\leq i\leq \mu_a$, $1\leq j\leq \mu_b$, $1\leq h\leq \mu_c$, $1\leq k\leq \mu_{c+1}$. 
\end{lemma}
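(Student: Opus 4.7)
The plan is to reduce every commutator of a ``long'' generator $\ovl{E}_{a,b;\cdot,\cdot}^{(r)}$ (with $b>a+1$) to commutators of adjacent generators $\ovl{E}_{c,c+1;\cdot,\cdot}^{(r)}$ via the telescoping formula (\ref{L720}), and then to invoke super Jacobi together with (\ref{L717})-(\ref{L720}) and the graded consequence of the super-Serre relation (\ref{p715}). I shall handle (b), (a), (c), (d) in that order, since the later items rely on the earlier ones.

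For (b), expand $\ovl{E}_{a,a+2;h,g}^{(\alpha+\beta-1)}$ via (\ref{L720}) as $(-1)^{\pa{k}_{a+1}}[\ovl{E}_{a,a+1;h,k}^{(\alpha)}, \ovl{E}_{a+1,a+2;k,g}^{(\beta)}]$ and substitute it into (\ref{L719}) with $b=a+1$. Identifying the inner bracket via (\ref{L718}) and (\ref{L720}) collapses the identity into the antisymmetry
\[
[\ovl{E}_{a,a+1;i,j}^{(r)}, \ovl{E}_{a,a+2;h,g}^{(\alpha+\beta-1)}] = -[\ovl{E}_{a,a+1;i,j}^{(\alpha)}, \ovl{E}_{a,a+2;h,g}^{(r+\beta-1)}].
\]
Setting $r=\alpha$ forces the bracket to vanish whenever the second degree is at least $r$, and the specialization $\beta=1$ then provides an exchange symmetry that closes the remaining degree range. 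Part (a) is the mirror-image argument, applying (\ref{L719}) with $(a',b')=(a+1,a)$ and deriving the analogous antisymmetry for $[\ovl{E}_{a+1,a+2;i,j}^{(r)}, \ovl{E}_{a,a+2;f,k}^{(\alpha+\beta-1)}]$; the same degree-pinching argument then closes (a).

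For (c), I split on the sizes of the interior blocks. If $\mu_{a+1}\ge 2$, decompose $\ovl{E}_{a,a+2;i,j}^{(r)}$ via (\ref{L720}) with middle index $p\ne h$; super Jacobi produces two inner brackets, one killed by the index-shifted form of (b) and the other killed (after a further (\ref{L720})-expansion of $\ovl{E}_{a+1,a+3}$) by (\ref{L717}) combined with the graded form of (\ref{p711}). A symmetric decomposition of $\ovl{E}_{a+1,a+3;h,k}^{(s)}$ disposes of $\mu_{a+2}\ge 2$. The main obstacle is the edge case $\mu_{a+1}=\mu_{a+2}=1$: here the parity class $\pa{h}_{a+1}+\pa{j}_{a+2}=1$ follows directly from the graded form of (\ref{p715}), while for the complementary parity class (both interior digits of $\so$ of the same parity) I plan to reduce via the isomorphism $\zeta_{M|N}$ of Proposition 4.1 (which trades the all-odd sub-case for the all-even one) combined with a direct super-Jacobi argument exploiting the $\mathfrak{sl}_2$-like ladder structure of $\ovl{E}_{a,a+1}, \ovl{E}_{a+1,a+2}, \ovl{E}_{a+2,a+3}$ in the graded.

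Finally (d) proceeds by induction on $b-a$. The base $b=a+2$ forces $c=a+1$ and coincides with (a). For the inductive step, decompose $\ovl{E}_{a,b;i,j}^{(r)}$ via (\ref{L720}) by peeling off an endpoint that does not coincide with $c$ or $c+1$, obtaining $\pm[\ovl{E}_{a,d}, \ovl{E}_{d,b}]$; super Jacobi then writes $[\ovl{E}_{a,b}, \ovl{E}_{c,c+1}]$ as a combination of commutators of strictly shorter $\ovl{E}$'s with $\ovl{E}_{c,c+1}$, each falling under the inductive hypothesis or one of (a), (b), (c).
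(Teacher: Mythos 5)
Your treatment of (a) and (b) is sound: the degree--pinching argument extracted from (\ref{L719}) together with (\ref{L718}) and (\ref{L720}) is essentially the route of \cite[Lemma 8.3]{Pe1}, which the paper simply cites for these parts, and your induction for (d) can be made to work with the usual care when the peeled endpoint abuts the block $c$ (those collisions fall back on (a), (b), (c), as you note). The genuine gap is in (c), precisely in the sub-case you yourself flag as the main obstacle: $\mu_{a+1}=\mu_{a+2}=1$ with $\pa{h}_{a+1}+\pa{j}_{a+2}=0$. The proposed reduction via $\zeta_{M|N}$ cannot work as stated, for two reasons. First, the identity to be proved lives in the abstract presented algebra $\gr\widehat{Y}^{+}_{\mu}$, where $\zeta_{M|N}$ --- an isomorphism $Y_{M|N}(\so)\to Y_{N|M}(\sd)$ --- is not available as a tool. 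Second, even in $Y_{\mu}$ the map $\zeta_{M|N}$ sends $E$'s to $F$'s (Proposition \ref{zetadef}), so it converts an $E$-identity into an $F$-identity; it does not trade the even-parity instance of (\ref{L723}) for the odd-parity one. The ``direct super-Jacobi argument'' you defer is exactly the missing content, and it is the entire point of the paper's proof of (c).

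The paper's argument, which you should adopt, needs no case division on block sizes: using the freedom of the middle index in (\ref{L720}), write $\ovl{E}_{a,a+2;i,j}^{(r)}=(-1)^{\pa{h}_{a+1}}[X,Y]$ and $\ovl{E}_{a+1,a+3;h,k}^{(s)}=(-1)^{\pa{j}_{a+2}}[Y,Z]$ with $X=\ovl{E}_{a,a+1;i,h}^{(r)}$, $Y=\ovl{E}_{a+1,a+2;h,j}^{(1)}$, $Z=\ovl{E}_{a+2,a+3;j,k}^{(s)}$, so that the \emph{same} element $Y$ occurs in both inner brackets. Expanding $\big[[X,Y],[Y,Z]\big]$ by the super Jacobi identity, the term $\big[[[X,Y],Y],Z\big]$ vanishes by (\ref{L719}), and the surviving term is rewritten, using $[X,Z]=0$ from (\ref{L717}) and $[Y,[Y,Z]]=0$ from (\ref{L719}), as $(-1)^{1+\pa{h}_{a+1}+\pa{j}_{a+2}}$ times the original bracket. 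When $\pa{h}_{a+1}+\pa{j}_{a+2}=0$ this forces the bracket to be its own negative, hence zero; when the sum is $1$ the manipulation is vacuous and the super Serre relation (\ref{p715}) applies verbatim. This uniform parity dichotomy replaces your three-way split on $\mu_{a+1},\mu_{a+2}$ and, in particular, closes the case your proposal leaves open.
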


\begin{proof}
Similar to the proof in \cite[Lemma 8.3]{Pe1} so we only show (c) in detail here since it is the place that we actually use the super Serre relations.

Assume first that $\pa{h}_{a+1}+\pa{j}_{a+2}=0$. 
Applying (\ref{L720}) on the left-hand side of (\ref{L723}) and using the super Jacobi identity, we have
\begin{align*}
 [\ovl{E}_{a,a+2;i,j}^{(r)}&,\ovl{E}_{a+1,a+3;h,k}^{(s)}]=\\
 &(-1)^{\pa{h}_{a+1}+\pa{j}_{a+2}}\big[\,[\ovl{E}_{a,a+1;i,h}^{(r)},\ovl{E}_{a+1,a+2;h,j}^{(1)}]
 \,,\,[\ovl{E}_{a+1,a+2;h,j}^{(1)},\ovl{E}_{a+2,a+3;j,k}^{(s)}]\,\big]\\
&=(-1)^{\pa{h}_{a+1}+\pa{j}_{a+2}}\Big\lbrace\Big[\,\big[\,[\ovl{E}_{a,a+1;i,h}^{(r)},\ovl{E}_{a+1,a+2;h,j}^{(1)}],
 \ovl{E}_{a+1,a+2;h,j}^{(1)}\big],\ovl{E}_{a+2,a+3;j,k}^{(s)}\Big]\\
&+\varepsilon\Big[\ovl{E}_{a+1,a+2;h,j}^{(1)},
 \big[\,[\ovl{E}_{a,a+1;i,h}^{(r)},\ovl{E}_{a+1,a+2;h,j}^{(1)}],\ovl{E}_{a+2,a+3;j,k}^{(s)}\big]\,\Big]\Big\rbrace,
\end{align*}
where $\varepsilon=(-1)^{(\pa{i}_{a}+\pa{h}_{a+1})(\pa{h}_{a+1}+\pa{j}_{a+2})}$.
By (\ref{L719}), the first term is zero. Using the super Jacobi identity, (\ref{L717}) and (\ref{L720}) again, we may deduce that the above equals to
\begin{align*}
&\varepsilon\Big[\ovl{E}_{a+1,a+2;h,j}^{(1)},
 \big[\,[\ovl{E}_{a,a+1;i,h}^{(r)},\ovl{E}_{a+1,a+2;h,j}^{(1)}],\ovl{E}_{a+2,a+3;j,k}^{(s)}\big]\,\Big]\\
=&\varepsilon\Big[\ovl{E}_{a+1,a+2;h,j}^{(1)},
 \big[\ovl{E}_{a,a+1;i,h}^{(r)},[\ovl{E}_{a+1,a+2;h,j}^{(1)},\ovl{E}_{a+2,a+3;j,k}^{(s)}]\,\big]\,\Big]+0\\
=&\varepsilon\big[\,[\ovl{E}_{a+1,a+2;h,j}^{(1)},\ovl{E}_{a,a+1;i,h}^{(r)}],
[\ovl{E}_{a+1,a+2;h,j}^{(1)},\ovl{E}_{a+2,a+3;j,k}^{(s)}]\,\big]+0\\
=&(-1)\varepsilon^2\big[\,[\ovl{E}_{a,a+1;i,h}^{(r)},\ovl{E}_{a+1,a+2;h,j}^{(1)}]
,[\ovl{E}_{a+1,a+2;h,j}^{(1)},\ovl{E}_{a+2,a+3;j,k}^{(s)}]\,\big]\\
=&(-1)^{1+\pa{h}_{a+1}+\pa{j}_{a+2}}[\ovl{E}_{a,a+2;i,j}^{(r)},\ovl{E}_{a+1,a+3;h,k}^{(s)}].
\end{align*}
By our assumption, $\pa{h}_{a+1}+\pa{j}_{a+2}=0$ and we have done. 

Assume on the other hand that $\pa{h}_{a+1}+\pa{j}_{a+2}=1$. Similarly, we apply (\ref{L720}) on the left-hand side of (\ref{L723}) to obtain
\begin{align*}
 [\ovl{E}_{a,a+2;i,j}^{(r)}&,\ovl{E}_{a+1,a+3;h,k}^{(s)}]=\\
 &(-1)^{\pa{h}_{a+1}+\pa{j}_{a+2}}\big[\,[\ovl{E}_{a,a+1;i,h}^{(r)},\ovl{E}_{a+1,a+2;h,j}^{(1)}]
 \,,\,[\ovl{E}_{a+1,a+2;h,j}^{(1)},\ovl{E}_{a+2,a+3;j,k}^{(s)}]\,\big]\\
=&-\big[\,[\ovl{E}_{a,a+1;i,h}^{(r)},\ovl{E}_{a+1,a+2;h,j}^{(1)}]
 \,,\,[\ovl{E}_{a+1,a+2;h,j}^{(1)},\ovl{E}_{a+2,a+3;j,k}^{(s)}]\,\big],
\end{align*}
which is zero directly by (\ref{p715}). 
\end{proof}

The following lemma, generalizing \cite[Lemma 6.7]{BK1} and \cite[(8.1)]{Pe1}, plays a crucial role in the proof of Theorem~\ref{Pg}.
\begin{lemma}\label{injeq}
For all $1\leq a\leq b\leq n$, $1\leq c\leq d\leq n$, $r,s\geq 0$ and
all $1\leq i\leq \mu_a$, $1\leq j\leq \mu_b$, $1\leq h\leq \mu_c$, $1\leq k\leq\mu_d$, we have
\begin{multline*}
[\ovl{E}_{a,b;i,j}^{(r)},\ovl{E}_{c,d;h,k}^{(s)}]=(-1)^{\pa{j}_b\pa{h}_c}\delta_{b,c}\delta_{h,j}\ovl{E}_{a,d;i,k}^{(r+s-1)}\\
-(-1)^{\pa{i}_a\pa{j}_b+\pa{i}_a\pa{h}_c+\pa{j}_b\pa{h}_c}\delta_{a,d}\delta_{i,k}\ovl{E}_{c,b;h,j}^{(r+s-1)}.
\end{multline*}
\end{lemma}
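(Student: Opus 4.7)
The plan is to argue by induction on the spread $\ell := (b-a) + (d-c) \ge 2$, following the structure of \cite[Lemma~6.7]{BK1} but with careful attention to the additional sign factors arising from the $\mathbb{Z}_2$-grading.

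The base case $\ell = 2$ has $b = a+1$ and $d = c+1$, so both operators are simple generators. I would split into subcases according to $|a-c|$. When $|a-c|>1$, both sides vanish: the left-hand side by (\ref{L717}), the right-hand side because neither Kronecker delta can fire. When $a=c$, the left-hand side vanishes in $\gr\widehat{Y}_\mu^+$ since (\ref{p707}) expresses the ungraded bracket as a sum of terms of total degree $r+s-2$, while both Kronecker deltas on the right-hand side vanish. When $|a-c|=1$, by graded supercommutator antisymmetry I may assume $c = a+1$; then (\ref{L718}) shifts the bracket to $[\ovl{E}_{a,a+1;i,j}^{(1)}, \ovl{E}_{a+1,a+2;h,k}^{(r+s-1)}]$, which (\ref{L720}) identifies with $(-1)^{\pa{j}_{a+1}}\ovl{E}_{a,a+2;i,k}^{(r+s-1)}$ when $h=j$, and which vanishes by the graded image of (\ref{p711}) when $h\ne j$; in either case the result agrees with the stated right-hand side.

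For the inductive step with $\ell > 2$, I may assume without loss of generality that $b-a > 1$. Choose an auxiliary index $p$ with $1 \le p \le \mu_{b-1}$ and invoke (\ref{L720}) to write
\[
\ovl{E}_{a,b;i,j}^{(r)} \;=\; (-1)^{\pa{p}_{b-1}}\big[\ovl{E}_{a,b-1;i,p}^{(r)},\,\ovl{E}_{b-1,b;p,j}^{(1)}\big].
\]
Then the graded Jacobi identity unfolds $[\ovl{E}_{a,b;i,j}^{(r)}, \ovl{E}_{c,d;h,k}^{(s)}]$ as a sum of two nested brackets, each of which contains an inner bracket of spread strictly less than $\ell$ (namely $1+(d-c)$ or $(b-a-1)+(d-c)$). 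The inductive hypothesis evaluates each inner bracket, producing at most a single $\ovl{E}$-element in each summand; substituting back and, where necessary, reassembling via (\ref{L720}) once more yields the desired identity. The auxiliary index $p$ should be chosen so that the Kronecker deltas emerging from the inductive step collapse the sum, and several intermediate terms must be recognized as zero using (\ref{LL725}), (\ref{LL726}), (\ref{L723}), and (\ref{L724}).

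The main technical obstacle will be the sign-factor bookkeeping. Every application of the graded Jacobi identity injects a sign depending on the restricted parities of the endpoint indices of the factors involved, and the inductive hypothesis contributes further signs; these have to cascade correctly through the two-layer expansion to produce exactly $(-1)^{\pa{j}_b\pa{h}_c}$ in the first term of the right-hand side and $(-1)^{\pa{i}_a\pa{j}_b+\pa{i}_a\pa{h}_c+\pa{j}_b\pa{h}_c}$ in the second. A secondary obstacle is to check that the derivation is independent of the choice of $p$, which is consistent with the fact that (\ref{L720}) holds for any intermediate index; in practice this has to be verified case by case during the inductive computation, exploiting the vanishing relations in $\gr\widehat{Y}_\mu^+$ listed above, and split according to which of the boundary coincidences $b-1=c$, $b=c$, $a=d$, or $a=d-1$ hold so that the emerging Kronecker deltas properly match the claimed right-hand side.
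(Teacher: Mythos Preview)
Your induction on the total spread $\ell=(b-a)+(d-c)$ has a genuine gap in the case $b=c$. After writing $\ovl{E}_{a,b;i,j}^{(r)}=(-1)^{\pa{p}_{b-1}}[\ovl{E}_{a,b-1;i,p}^{(r)},\ovl{E}_{b-1,b;p,j}^{(1)}]$ and applying the graded Jacobi identity, the inner bracket $[\ovl{E}_{a,b-1;i,p}^{(r)},\ovl{E}_{b,d;h,k}^{(s)}]$ has spread $\ell-1$ and vanishes by induction; but the other inner bracket $[\ovl{E}_{b-1,b;p,j}^{(1)},\ovl{E}_{b,d;h,k}^{(s)}]$ yields $\delta_{h,j}(-1)^{\pa{j}_b}\ovl{E}_{b-1,d;p,k}^{(s)}$, and you are left with the outer bracket
\[
[\ovl{E}_{a,b-1;i,p}^{(r)},\,\ovl{E}_{b-1,d;p,k}^{(s)}],
\]
whose spread is $(b-1-a)+(d-b+1)=d-a=\ell$. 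This is not covered by the inductive hypothesis, is not zero (it is precisely what must produce the $\ovl{E}_{a,d}$ term on the right-hand side), and is not an instance of (\ref{L720}) once $b-a>2$, since neither factor is a simple generator. Decomposing $\ovl{E}_{c,d}$ instead of $\ovl{E}_{a,b}$ runs into the symmetric obstruction. Your phrase ``reassembling via (\ref{L720}) once more'' therefore hides an argument that is as long as the lemma itself.

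The paper does not induct on $\ell$. Assuming $a\le c$, it splits into seven cases according to the order type of $(a,b,c,d)$. The case $a<b=c<d$---exactly where your induction stalls---is handled by a direct build-up: starting from the two-step identity $[\ovl{E}_{b-1,b;i_1,j}^{(r)},\ovl{E}_{b,b+1;h,k_1}^{(s)}]=\delta_{h,j}(-1)^{\pa{h}_b}\ovl{E}_{b-1,b+1;i_1,k_1}^{(r+s-1)}$ obtained from (\ref{L718}) and (\ref{L720}), one brackets successively on the right with $\ovl{E}_{b+1,b+2}^{(1)},\ovl{E}_{b+2,b+3}^{(1)},\ldots$ and then on the left with $\ovl{E}_{b-2,b-1}^{(1)},\ovl{E}_{b-3,b-2}^{(1)},\ldots$, using (\ref{L717}), (\ref{L720}) and Jacobi at each step, to reach the general identity for $[\ovl{E}_{a,b}^{(r)},\ovl{E}_{b,d}^{(s)}]$. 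The remaining six cases (disjoint, nested, overlapping intervals, and equal endpoints) are then dispatched using (\ref{L724}) together with the cases already established.
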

\begin{proof}
 Without loss of generality, we may assume that $a\leq c$. The proof is divided into 7 cases and we discuss them one by one.
\begin{description}
\item[Case 1.] $a<b<c<d$:\\
It follows directly from (\ref{L717}) and (\ref{L720}) that the bracket in Lemma \ref{injeq} is zero.

\item[Case 2.] $a<b=c<d$:\\
By (\ref{L718}) and (\ref{L720}), we have
\begin{equation}\label{L725}
[\ovl{E}^{(r+1)}_{b-1,b;i_1,j}\,,\,\ovl{E}^{(s+1)}_{b,b+1;h,k_1}]
=[\ovl{E}^{(r+s+1)}_{b-1,b;i_1,j}\,,\,\ovl{E}^{(1)}_{b,b+1;h,k_1}]
=\delta_{h,j}(-1)^{\pa{h}_b}\ovl{E}^{(r+s+1)}_{b-1,b+1;i_1,k_1}.
\end{equation}
Note that when $h\neq j$, the bracket is zero by (\ref{61b}) and hence the term $\delta_{h,j}$ shows up.
Taking brackets on both sides of (\ref{L725}) with the elements
\[
\ovl{E}^{(1)}_{b+1,b+2;k_1,k_2}, \ovl{E}^{(1)}_{b+2,b+3;k_2,k_3}, \cdots , \ovl{E}^{(1)}_{d-1,d;k_{d-b+1},k}
\]
from the right then using (\ref{L717}), (\ref{L720}) and the super Jacobi identity, we deduce that
\begin{equation}\label{L726}
[\ovl{E}^{(r+1)}_{b-1,b;i_1,j}\,,\,\ovl{E}^{(s+1)}_{b,d;h,k}]=\delta_{h,j}(-1)^{\pa{h}_b}\ovl{E}^{(r+s+1)}_{b-1,d;i_1,k}\,.
\end{equation}
Taking brackets on both sides of (\ref{L726}) with the elements
\[
\ovl{E}^{(1)}_{b-2,b-1;i_2,i_1}, \ovl{E}^{(1)}_{b-3,b-2;i_3,i_2},\cdots,\ovl{E}^{(1)}_{a,a+1;i,i_{b-a-1}}
\]
from the left and using exactly the same method as above, we have
\[
[\ovl{E}^{(r)}_{a,b;i,j}\,,\,\ovl{E}^{(s)}_{b,d;h,k}]=\delta_{h,j}(-1)^{\pa{h}_b}\ovl{E}^{(r+s-1)}_{a,d;i,k},
\;\text{as desired}.
\]
\item[Case 3.] $a<c<b=d$:\\
Using the super Jacobi identity together with (\ref{L720}) and (\ref{L724}), we have
\begin{align*}
[\ovl{E}^{(r)}_{a,b;i,j}&,\ovl{E}^{(s)}_{c,b;h,k}]
=\big[\ovl{E}^{(r)}_{a,b;i,j},(-1)^{\pa{f_1}_{c+1}}[\ovl{E}^{(1)}_{c,c+1;h,f_1},\ovl{E}^{(s)}_{c+1,b;f_1,k}]\,\big]\\
&=(-1)^{\pa{f_1}_{c+1}}\big[\,[\ovl{E}^{(r)}_{a,b;i,j}\,,\,\ovl{E}^{(1)}_{c,c+1;h,f_1}],\ovl{E}^{(s)}_{c+1,b;f_1,k}\big]\\
&\qquad\qquad\qquad\pm(-1)^{\pa{f_1}_{c+1}}\big[\ovl{E}^{(1)}_{c,c+1;h,f_1}\,,\,[\ovl{E}^{(r)}_{a,b;i,j}\,,\,\ovl{E}^{(s)}_{c+1,b;f_1,k}]\,\big]\\
&=0\pm (-1)^{\pa{f_1}_{c+1}}\big[\ovl{E}^{(1)}_{c,c+1;h,f_1}\,,\,[\ovl{E}^{(r)}_{a,b;i,j}\,,\,\ovl{E}^{(s)}_{c+1,b;f_1,k}]\,\big]\\
&=\cdots =\pm \Big[\ovl{E}^{(1)}_{c,c+1;h,f_1}\,,\,[\ovl{E}^{(1)}_{c+1,c+2;f_1,f_2},\ldots,
[\ovl{E}^{(r)}_{a,b;i,j}\,,\,\ovl{E}^{(s)}_{b-1,b;f_{b-1-c},k}]\,\big]\cdots\Big].
\end{align*}
The bracket $[\ovl{E}^{(r)}_{a,b;i,j}\,,\,\ovl{E}^{(s)}_{b-1,b;f_{b-1-c},k}]$ in the middle is zero by (\ref{L724}).

\item[Case 4.] $a<c<d<b$:\\
Using the same technique as in Case 3, we have
\begin{align*}
[\ovl{E}^{(r)}_{a,b;i,j}&,\ovl{E}^{(s)}_{c,d;h,k}]
=\big[\ovl{E}^{(r)}_{a,b;i,j},(-1)^{\pa{f_1}_{c+1}}[\ovl{E}^{(1)}_{c,c+1;h,f_1},\ovl{E}^{(s)}_{c+1,d;f_1,k}]\,\big]\\
&=(-1)^{\pa{f_1}_{c+1}}\big[\,[\ovl{E}^{(r)}_{a,b;i,j}\,,\,\ovl{E}^{(1)}_{c,c+1;h,f_1}],\ovl{E}^{(s)}_{c+1,d;f_1,k}]\,\big]\\
&\quad\pm(-1)^{\pa{f_1}_{c+1}}\big[\ovl{E}^{(1)}_{c,c+1;h,f_1}\,,\,[\ovl{E}^{(r)}_{a,b;i,j}\,,\,\ovl{E}^{(s)}_{c+1,d;f_1,k}]\,\big]\\
&=0\pm(-1)^{\pa{f_1}_{c+1}}\big[\ovl{E}^{(1)}_{c,c+1;h,f_1}\,,\,[\ovl{E}^{(r)}_{a,b;i,j}\,,\,\ovl{E}^{(s)}_{c+1,d;f_1,k}]\,\big]\\
&=\cdots =\pm \Big[\ovl{E}^{(1)}_{c,c+1;h,f_1}\,,\,\big[\ovl{E}^{(1)}_{c+1,c+2;f_1,f_2}\,,\ldots,[\ovl{E}^{(r)}_{a,b;i,j}\,,\,\ovl{E}^{(s)}_{d-1,d;f_{d-1-c},k}]\,\big]\cdots\Big].
\end{align*}
Following from (\ref{L724}) again, the bracket $[\ovl{E}^{(r)}_{a,b;i,j}\,,\,\ovl{E}^{(s)}_{d-1,d;f_{d-1-c},k}]$ vanishes.

\item[Case 5.] $a<c<b<d$:\\
 We prove this case by induction on $d-b\geq 1$. When $d-b=1$, we have
 \begin{multline*}
  [\ovl{E}^{(r)}_{a,b;i,j},\ovl{E}^{(s)}_{c,b+1;h,k}]
  =\big[\ovl{E}^{(r)}_{a,b;i,j},(-1)^{\pa{j}_{b+1}}[\ovl{E}^{(s)}_{c,b;h,j},\ovl{E}^{(1)}_{b,b+1;j,k}]\,\big]\\
  =(-1)^{\pa{j}_{b+1}}\big[\,[\ovl{E}^{(r)}_{a,b;i,j}\,,\,\ovl{E}^{(s)}_{c,b;h,j}]\,,\,\ovl{E}^{(1)}_{b,b+1;j,k}\big]
  \pm \big[\ovl{E}^{(s)}_{c,b;h,j}\,,\,[\ovl{E}^{(r)}_{a,b;i,j}\,,\,\ovl{E}^{(1)}_{b,b+1;j,k}]\,\big].
 \end{multline*}
Now the bracket in the first term is zero by Case 3, and we may rewrite the whole second term as
$\pm[\ovl{E}^{(r)}_{a,b+1;i,k},\ovl{E}^{(s)}_{c,b;h,j}]$, which is zero by Case 4.

Assume that $d-b>1$, then $d-1>b$. By (\ref{L720}), the bracket equals to
\begin{align*}
[&\ovl{E}^{(r)}_{a,b;i,j},\ovl{E}^{(s)}_{c,d;h,k}]
=\big[\ovl{E}^{(r)}_{a,b;i,j}\,,\,(-1)^{\pa{f}_{d-1}}[\ovl{E}^{(s)}_{c,d-1;h,f}\,,\,\ovl{E}^{(1)}_{d-1,d;f,k}]\,\big]\\
&=(-1)^{\pa{f}_{d-1}}\big[\,[\ovl{E}^{(r)}_{a,b;i,j}\,,\,\ovl{E}^{(s)}_{c,d-1;h,f}]\,,\,\ovl{E}^{(1)}_{d-1,d;f,k}\big]
\pm\big[\ovl{E}^{(s)}_{c,d-1;h,f}\,,\,[\ovl{E}^{(r)}_{a,b;i,j}\,,\,\ovl{E}^{(1)}_{d-1,d;f,k}]\,\big].
\end{align*}
The first term is zero by the induction hypothesis, while the second term is zero as well by Case 1.

\item[Case 6.] $a=c<b<d$:
 \begin{align*}
 [\ovl{E}^{(r)}_{a,b;i,j},\ovl{E}^{(s)}_{a,d;h,k}]
 &=\big[\ovl{E}^{(r)}_{a,b;i,j}\,,\,(-1)^{\pa{f}_{a+1}}[\ovl{E}^{(1)}_{a,a+1;h,f}\,,\,\ovl{E}^{(s)}_{a+1,d;f,k}]\,\big]\\
 &=(-1)^{\pa{f}_{a+1}}\big[\,[\ovl{E}^{(r)}_{a,b;i,j}\,,\,\ovl{E}^{(1)}_{a,a+1;h,f}]\,,\,\ovl{E}^{(s)}_{a+1,d;h,k}\big]\\
 &\quad \pm\big[\ovl{E}^{(1)}_{a,a+1;h,f}\,,\,[\ovl{E}^{(r)}_{a,b;i,j}\,,\,\ovl{E}^{(s)}_{a+1,d;f,k}]\,\big].
 \end{align*}
 Note that $[\ovl{E}^{(r)}_{a,b;i,j}\,,\,\ovl{E}^{(s)}_{a+1,d;f,k}]=0$ by Case 5. Hence it suffices to show that
 \begin{equation}\label{L727}
 [\ovl{E}^{(r)}_{a,b;i,j}\,,\,\ovl{E}^{(1)}_{a,a+1;h,f}]=0, \qquad \text{for all}\quad b>a.
 \end{equation}
 We prove (\ref{L727}) by induction on $b-a\geq 1$. When $b-a=1$, it follows from (\ref{L717}). Now
 assume $b-a>1$. By (\ref{L720}), we have
 \begin{align*}
 [\,\ovl{E}^{(r)}_{a,b;i,j}\,,\,\ovl{E}^{(1)}_{a,a+1;h,f}\,]
 &= \big[\,(-1)^{\pa{g}_{b-1}}\,[\,\ovl{E}^{(r)}_{a,b-1;i,g}\,,\,\ovl{E}^{(1)}_{b-1,b;g,j}\,]\,,\,\ovl{E}^{(1)}_{a,a+1;h,f}\big]\\
 &=(-1)^{\pa{g}_{b-1}}\,\big[\ovl{E}^{(r)}_{a,b-1;i,g}\,,\,[\ovl{E}^{(1)}_{b-1,b;g,j}\,,\,\ovl{E}^{(1)}_{a,a+1;h,f}]\,\big]\\
 &\quad \pm\,\big[\ovl{E}^{(1)}_{b-1,b;g,j}\,,\,[\ovl{E}^{(r)}_{a,b-1;i,g}\,,\,\ovl{E}^{(1)}_{a,a+1;h,f}]\,\big].
 \end{align*}
 Note that $[\ovl{E}^{(r)}_{a,b-1;i,g}\,,\,\ovl{E}^{(1)}_{a,a+1;h,f}]=0$ by the induction hypothesis. Also by (\ref{L717}),
 $[\ovl{E}^{(1)}_{b-1,b;g,j}\,,\,\ovl{E}^{(1)}_{a,a+1;h,f}]=0$ unless $b-1=a+1$, in which case,  
 (\ref{L727}) becomes $[\ovl{E}^{(r)}_{a,a+2;i,j}\,,\,\ovl{E}^{(1)}_{a,a+1;h,f}]$, which is zero by (\ref{LL726}).

\item[Case 7.] $a=c<b=d$:\\
 We claim that
 \begin{equation}\label{L728}
 [\ovl{E}^{(r)}_{a,b;i,j}\,,\,\ovl{E}^{(s)}_{a,b;h,k}]=0.
 \end{equation}
 If $b=a+1$, it follows directly from (\ref{L717}). If $b>a+1$, we may expand one term in the bracket of (\ref{L728}) by (\ref{L720}) to deduce that
 \begin{align*}
  [\,\ovl{E}^{(r)}_{a,b;i,j}\,,\,\ovl{E}^{(s)}_{a,b;h,k}\,]
  &=\big[\,(-1)^{\pa{f}_{b-1}}\,[\ovl{E}^{(r)}_{a,b-1;i,f}\,,\,\ovl{E}^{(1)}_{b-1,b;f,j}]\,,\,\ovl{E}^{(s)}_{a,b;h,k}\big]\\
  &=(-1)^{\pa{f}_{b-1}}\,\big[\ovl{E}^{(r)}_{a,b-1;i,f}\,,\,[\ovl{E}^{(1)}_{b-1,b;f,j}\,,\,\ovl{E}^{(s)}_{a,b;h,k}]\,\big]\\
  &\quad\pm\,\big[\ovl{E}^{(1)}_{b-1,b;f,j}\,,\,[\ovl{E}^{(r)}_{a,b-1;i,f}\,,\,\ovl{E}^{(s)}_{a,b;h,k}]\,\big].
 \end{align*}
Note that $[\ovl{E}^{(1)}_{b-1,b;f,j}\,,\,\ovl{E}^{(s)}_{a,b;h,k}]=0$ by Case 3 and $[\ovl{E}^{(r)}_{a,b-1;i,f}\,,\,\ovl{E}^{(s)}_{a,b;h,k}]=0$ by Case~6, which proves (\ref{L728}).
\end{description}
This completes the proof of Lemma \ref{injeq}.
\end{proof}

\begin{proposition}\label{ind1}
$\widehat{Y}_{\mu}$ is spanned as a vector superspace by supermonomials in the elements $\lbrace D_{a;i,j}^{(r)}, E_{a,b;i,j}^{(r)}, F_{b,a;i,j}^{(r)}\rbrace$ taken in a certain fixed order so that $F$'s appear before $D$'s and $D$'s appear before $E$'s.
\end{proposition}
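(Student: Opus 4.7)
Set $V := \widehat{Y}^-_\mu \cdot \widehat{Y}^0_\mu \cdot \widehat{Y}^+_\mu$. The conclusion of the proposition is equivalent to the equality $V = \widehat{Y}_\mu$: each of the subalgebras $\widehat{Y}^-_\mu$, $\widehat{Y}^0_\mu$, $\widehat{Y}^+_\mu$ is by definition generated by the indicated $F$'s, $D$'s, $E$'s respectively, and a standard filtration argument---using Lemma \ref{injeq} in $\gr\widehat{Y}^+_\mu$, and the analogous statement obtained via $\zeta_{M|N}$ in $\gr\widehat{Y}^-_\mu$---shows that each of these subalgebras is spanned by ordered supermonomials in its own generators, in any fixed internal order. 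Since $V$ clearly contains the unit and every parabolic generator, it suffices to prove $V$ is closed under multiplication, which in turn reduces to three ``straightening'' inclusions:
\[
\mathrm{(A)}\;\widehat{Y}^+_\mu \widehat{Y}^0_\mu \subseteq \widehat{Y}^0_\mu \widehat{Y}^+_\mu,\quad
\mathrm{(B)}\;\widehat{Y}^0_\mu \widehat{Y}^-_\mu \subseteq \widehat{Y}^-_\mu \widehat{Y}^0_\mu,\quad
\mathrm{(C)}\;\widehat{Y}^+_\mu \widehat{Y}^-_\mu \subseteq V.
\]
Granting these, a product $(x_1 y_1 z_1)(x_2 y_2 z_2)$ of two elements of $V$ is straightened back into $V$ by first moving $x_2$ leftward past $z_1$ via (C), then past $y_1$ via (B), and finally moving $y_2$ past $z_1$ via (A).

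For (A), the base case at the level of simple generators is immediate from (\ref{p704}), which rearranges to
\[
E_{b;h,k}^{(s)} D_{a;i,j}^{(r)} = \pm D_{a;i,j}^{(r)} E_{b;h,k}^{(s)} + (\text{a sum of } D \cdot E \text{ terms}) \;\in\; \widehat{Y}^0_\mu \widehat{Y}^+_\mu.
\]
For an extended generator $E_{a,b;i,j}^{(r)}$ with $b > a+1$, I would expand it via (\ref{ter}) as a supercommutator $[E_{a,b-1}, E_{b-1}]$ and compute $[E_{a,b;i,j}^{(r)}, D^{(s)}]$ by the super Jacobi identity, with an outer induction on $b-a$ reducing to the simple-generator case; pushing a single $D$ past an arbitrary monomial in $\widehat{Y}^+_\mu$ is then an inner induction on the number of $E$-factors. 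Statement (B) is entirely symmetric, using (\ref{p705}) in place of (\ref{p704}); alternatively (B) can be deduced from (A) by applying the isomorphism $\zeta_{M|N}$ of Proposition \ref{zetadef}, which exchanges $E$'s with $F$'s and swaps $\widehat{Y}^\pm_\mu$.

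The main obstacle is (C). The simple-generator base case $[E_{a;i,j}^{(r)}, F_{b;h,k}^{(s)}] \in \widehat{Y}^0_\mu$ is exactly (\ref{p706}), which even vanishes when $a \neq b$ by the $\delta_{a,b}$ factor. To extend this to an arbitrary bracket $[E_{a,b;i,j}^{(r)}, F_{d,c;h,k}^{(s)}]$ I would iterate (\ref{ter}) together with the super Jacobi identity: each expansion breaks the outer bracket into a sum of brackets between strictly less nested extended generators, intermediate brackets of $E$-type with $D$-type (handled by (A)) and of $D$-type with $F$-type (handled by (B)), and finally simple-case $[E,F]$-brackets landing in $\widehat{Y}^0_\mu$. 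A double induction on the lengths of $z \in \widehat{Y}^+_\mu$ and $x \in \widehat{Y}^-_\mu$ then straightens $z \cdot x$ into $V$. The hardest bookkeeping is maintaining an inductive complexity measure---for instance lexicographic on the pair (total $E$-count plus $F$-count, nesting depth of the extended generators)---that strictly decreases under each super Jacobi expansion, and verifying that the $\Z_2$-graded sign factors accumulate into the correct supercommutator signs; this last point is substantially more delicate here than in \cite{BK1, Go, Pe1}, because under a general $\so$ even and odd generators may coexist within a single block.
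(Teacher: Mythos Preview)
Your proposal is correct and uses the same ingredients as the paper (Lemma~\ref{injeq} for $\widehat{Y}^+_\mu$, the map $\zeta_{M|N}$ for $\widehat{Y}^-_\mu$, and relations (\ref{p704})--(\ref{p706}) for the cross-commutators), but the paper takes a cleaner route for the straightening. Rather than proving your inclusions (A), (B), (C) by direct induction in $\widehat{Y}_\mu$, the paper passes entirely to the associated graded algebra $\gr\widehat{Y}_\mu$: the right-hand sides of (\ref{p704}), (\ref{p705}), (\ref{p706}) all have filtration degree $\leq r+s-3$, so in $\gr\widehat{Y}_\mu$ the images $\ovl{D}$, $\ovl{E}$, $\ovl{F}$ pairwise supercommute across types. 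The surjectivity of the multiplication map $\gr\widehat{Y}^-_\mu \otimes \gr\widehat{Y}^0_\mu \otimes \gr\widehat{Y}^+_\mu \twoheadrightarrow \gr\widehat{Y}_\mu$ is then immediate, and the spanning statement for $\widehat{Y}_\mu$ follows by the standard filtration lift. In effect, the ``inductive complexity measure'' you worry about is simply the loop-filtration degree.

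Your treatment of (C) is also more elaborate than necessary. Since $\widehat{Y}^+_\mu$ is already generated by the \emph{simple} $E_a^{(r)}$'s (the extended $E_{a,b}^{(r)}$ being iterated brackets of these via (\ref{ter})), you never need to compute $[E_{a,b;i,j}^{(r)}, F_{d,c;h,k}^{(s)}]$ directly. It suffices to move a single simple $E$ past a word in simple $F$'s using (\ref{p706}), absorb the resulting $\widehat{Y}^0_\mu$-tails into $\widehat{Y}^-_\mu\widehat{Y}^0_\mu$ via (B), and induct on the length of the $E$-word; the extended generators only reappear at the end, for the internal ordering inside each factor, which you already handle via Lemma~\ref{injeq}.
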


\begin{proof}
Lemma \ref{injeq} implies that the graded algebra $\gr\widehat{Y}_{\mu}^+$ is spanned by supermonomials in $\lbrace\ovl{E}_{a,b;i,j}^{(r)}\rbrace$ in some fixed order and hence $\widehat{Y}_{\mu}^+$ is spanned by supermonomials in $\lbrace E_{a,b;i,j}^{(r)}\rbrace$ in some fixed order as well.

By applying the automorphism $\zeta_{M|N}$, we see that $\widehat{Y}_{\mu}^-$ is spanned by supermonomials in $\lbrace F_{a,b;i,j}^{(r)}\rbrace$ in a certain fixed order as well.

Moreover, $\gr\widehat{Y}^0_{\mu}$ is supercommutative by Proposition \ref{dd0}, and it follows that $\widehat{Y}^0_{\mu}$ is spanned by supermonomials in $\lbrace D_{a;i,j}^{(r)}\rbrace$ in a certain fixed order.

Finally, by the defining relations in Proposition \ref{srlns} and the argument above, we may interchange the order between those $D$'s, $E$'s and $F$'s in a supermonomial such that all the $F$'s appear before all the $D$'s and all the $D$'s appear before all the $E$'s.

As a result, the multiplication map is surjective:
\[
\gr \widehat{Y}^-_{\mu}\otimes \gr\widehat{Y}^0_{\mu}\otimes \gr\widehat{Y}^+_{\mu}\twoheadrightarrow \gr\widehat{Y}_{\mu}
\]
and our proposition is established.
\end{proof}

\begin{proposition}\label{ind2}
The images of the supermonomials in Proposition~\ref{ind1} under $\Gamma$ are linearly independent.
\end{proposition}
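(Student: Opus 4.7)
The strategy is to pass to the associated graded superalgebra with respect to the loop filtration on $Y_\mu$ and to apply the PBW theorem for $Y_{M|N}$ (Corollary~\ref{Yloop}). By that corollary, $\gr Y_{M|N}$ is canonically identified with the universal enveloping superalgebra $U(\gl_{M|N}[x])$, where the class of $t_{ij}^{(r)}$ in $\gr_{r-1}Y_{M|N}$ goes to $(-1)^{|i|}e_{ij}x^{r-1}$. Since ordered supermonomials in any basis of $\gl_{M|N}[x]$ form a $\mathbb{C}$-basis of $U(\gl_{M|N}[x])$, it suffices to verify that the leading loop-symbols of the parabolic generators $D_{a;i,j}^{(r)}$, $E_{a,b;i,j}^{(r)}$ and $F_{b,a;i,j}^{(r)}$, as the indices $a,b,i,j,r$ vary, constitute a basis of the positively-graded part $x\cdot\gl_{M|N}[x]$ (together with the scalar relations $D_{a;i,j}^{(0)}=\delta_{ij}$).

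First I would compute these leading symbols using the quasideterminant expressions of Proposition~\ref{quasi}. Writing $m(a,i):=\mu_1+\cdots+\mu_{a-1}+i$, the quasideterminant for $D_{a;i,j}(u)$ is of the form $T_{a,a;i,j}(u)-CA^{-1}B$, where the correction $CA^{-1}B$ is a sum of products of three power series. Each such product contributes to $u^{-r}$ terms of the shape $t_{\alpha\beta}^{(r_1)}t'_{\gamma\delta}^{(r_2)}t_{\sigma\tau}^{(r_3)}$ with $r_1+r_2+r_3=r$, which has total loop degree $r-3$; hence the correction lies in $L_{r-3}Y_{M|N}$ and is killed by passing to $\gr_{r-1}Y_{M|N}$. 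Thus the symbol of $D_{a;i,j}^{(r)}$ equals the symbol of $t_{m(a,i),m(a,j)}^{(r)}$, namely $(-1)^{|m(a,i)|}e_{m(a,i),m(a,j)}x^{r-1}$. An entirely analogous analysis applied to (\ref{quasie}) and (\ref{quasif}), using (\ref{Jacobi1}), shows that $\gr_{r-1}E_{a,b;i,j}^{(r)}$ is a nonzero scalar multiple of $e_{m(a,i),m(b,j)}x^{r-1}$ and $\gr_{r-1}F_{b,a;i,j}^{(r)}$ is a nonzero scalar multiple of $e_{m(b,j),m(a,i)}x^{r-1}$.

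As $(a,b,i,j,r)$ ranges over all admissible values, the collection $\{e_{\alpha\beta}x^{r-1}\mid 1\le\alpha,\beta\le M+N,\ r\ge 1\}$ is covered exactly once, giving the standard basis of $x\cdot\gl_{M|N}[x]$. Therefore, for the fixed order (all $F$'s, then $D$'s, then $E$'s) from Proposition~\ref{ind1}, the images of our supermonomials under $\gr$ are, up to nonzero scalars, ordered supermonomials in a basis of $\gl_{M|N}[x]$; by PBW for $U(\gl_{M|N}[x])$ these are linearly independent in $\gr Y_\mu\cong U(\gl_{M|N}[x])$. A standard filtration argument then lifts this to linear independence of the supermonomials themselves in $Y_\mu$: any nontrivial linear relation $\sum c_\alpha\Gamma(m_\alpha)=0$ in $Y_\mu$ would, upon taking top-degree components, yield a nontrivial linear relation among distinct ordered supermonomials in a basis of $\gl_{M|N}[x]$, a contradiction.

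The main technical point will be the symbol computation of the second paragraph: one must track carefully the loop degrees in the quasideterminant expansions, verify that every correction term is strictly subleading, and pin down the sign factors coming from Corollary~\ref{Yloop} and from (\ref{ter}) when unpacking $E_{a,b;i,j}^{(r)}$ for $b>a+1$ as an iterated supercommutator of $E_{a,a+1}$'s and $E_{a+1,a+2;k,j}^{(1)}$'s. Everything else — the PBW step and the filtration lift — is formal once this leading-symbol identification is in hand, and combining Proposition~\ref{ind1} with Proposition~\ref{ind2} will complete the proof that $\Gamma:\widehat{Y}_\mu\to Y_\mu$ is an isomorphism and hence establishes Theorem~\ref{Pg}.
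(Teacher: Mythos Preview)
Your approach is essentially the same as the paper's: pass to the loop-associated graded, use Proposition~\ref{quasi} to identify the top symbol of each parabolic generator with (a nonzero scalar times) the corresponding $e_{\alpha\beta}x^{r-1}$ in $U(\gl_{M|N}[x])$ via Corollary~\ref{Yloop}, and conclude by the PBW theorem for the loop superalgebra together with a standard filtration lift. Two small slips to fix: the symbols for $r\ge 1$ give $e_{\alpha\beta}x^{r-1}$ with $r-1\ge 0$, so they span all of $\gl_{M|N}[x]$, not $x\cdot\gl_{M|N}[x]$; and for $F_{b,a;i,j}^{(r)}$ the row index lies in block $b$ and the column index in block $a$, so the symbol should be a scalar multiple of $e_{m(b,i),m(a,j)}x^{r-1}$ rather than $e_{m(b,j),m(a,i)}x^{r-1}$.
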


\begin{proof}
By Corollary~\ref{Yloop}, we may identify $\gr Y_{M|N}=\gr Y_{\mu}$ with the loop superalgebra $U(\gl_{M|N}[x])$ via
\[
\gr _{r-1}t_{ij}^{(r)}\longmapsto (-1)^{\pa{i}}e_{ij}x^{r-1}.
\]
We consider the following composition
\[
\gr \widehat{Y}_{\mu}^-\otimes \gr \widehat{Y}_{\mu}^0\otimes \gr \widehat{Y}_{\mu}^+\twoheadrightarrow \gr \widehat{Y}_{\mu} \xrightarrow{\Gamma} \gr Y_{\mu}\cong U(\gl_{M|N}[x]).
\]
Let $n_a:=\mu_1+\mu_2+\ldots+\mu_a$ for short. By Proposition~\ref{quasi}, the image of $\ovl{E}_{a,b;i,j}^{(r)}$ (respectively, $\ovl{D}_{a;i,j}^{(r)}$, $\ovl{F}_{b,a;i,j}^{(r)}$) under the above composition map is $(-1)^{\pa{i}_{a}}e_{n_a+i, n_b+j}x^{r-1}$ (respectively, $(-1)^{\pa{i}_{a}}e_{n_a+i,n_a+j}x^{r-1}$, $(-1)^{\pa{i}_{b}}e_{n_b+i, n_a+j}x^{r-1}$ ). By the PBW theorem for $U(\gl_{M|N}[x])$, the image (under the map $\Gamma$) of the set of all supermonomials in the following set
\begin{align*}
&\qquad\big\lbrace \gr_{r-1}\ovl{D}_{a;i,j}^{(r)} \,|\, 1\leq a\leq n, \; 1\leq i,j\leq\mu_a, \, r\geq 1 \big\rbrace \\
&\cup\big\lbrace \gr_{r-1}\ovl{E}_{a,b;i,j}^{(r)} \,|\, 1\leq a<b\leq n, \; 1\leq i\leq\mu_a, 1\leq j\leq\mu_b, \, r\geq 1 \big\rbrace\\
&\cup\big\lbrace \gr_{r-1}\ovl{F}_{b,a;i,j}^{(r)} \,|\, 1\leq a<b\leq n, \; 1\leq i\leq\mu_b, 1\leq j\leq\mu_a, \, r\geq 1 \big\rbrace
\end{align*}
taken in a certain fixed order must be linearly independent in $\gr Y_{\mu}$ and hence Proposition~\ref{ind2} follows.
\end{proof}

\begin{corollary}
The homomorphism $\Gamma:\widehat{Y}_{\mu}\rightarrow Y_{\mu}$ is injective, and Theorem~\ref{Pg} follows.
\end{corollary}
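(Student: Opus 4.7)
The plan is to deduce the injectivity of $\Gamma$ immediately from the two preceding propositions, which have done all the substantive work. Concretely, suppose $x \in \ker \Gamma$. By Proposition~\ref{ind1}, I can express $x$ as a finite linear combination of ordered supermonomials in the elements $\{F_{b,a;i,j}^{(r)}\}$, $\{D_{a;i,j}^{(r)}\}$, $\{E_{a,b;i,j}^{(r)}\}$, arranged so that all $F$'s appear first, then all $D$'s, then all $E$'s. Applying $\Gamma$ sends this expansion to the corresponding linear combination of supermonomials in $Y_\mu$; since $\Gamma(x)=0$ and Proposition~\ref{ind2} ensures those images are linearly independent in $Y_\mu$, every coefficient must vanish, hence $x=0$. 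Combined with the surjectivity of $\Gamma$ already noted (which follows from Theorem~\ref{gendef} providing a generating set together with Proposition~\ref{srlns} verifying that the listed relations do hold in $Y_\mu$), this shows $\Gamma$ is an isomorphism.

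Since $\widehat{Y}_\mu$ was defined as the abstract superalgebra on exactly the parabolic generators modulo the relations (\ref{p701})--(\ref{p716}), an isomorphism $\Gamma:\widehat{Y}_\mu \xrightarrow{\sim} Y_\mu$ is precisely the content of Theorem~\ref{Pg}: the listed relations form a complete set of defining relations for $Y_\mu(\so)$ with respect to the parabolic generators, for arbitrary $\mu$ and arbitrary $\so$.

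All of the essential difficulty has been absorbed into the earlier preparation, so the main obstacle was not in this corollary itself but in the route leading to it. The hard part was Proposition~\ref{ind1}, which required identifying the supercommutation structure on the graded algebra $\gr\widehat{Y}_\mu^+$ via Lemma~\ref{injeq}; the proof of that lemma, in turn, genuinely used the super Serre relations (\ref{p715}) in the mixed-parity case $\pa{h}_{a+1}+\pa{j}_{a+2}=1$, where the even-case trick of moving one bracket to the other side with a sign fails. Once a PBW-type spanning set for $\widehat{Y}_\mu$ is available, matching it against the genuine PBW basis of $Y_\mu$ provided by Corollary~\ref{Yloop} (via the explicit images $(-1)^{\pa{i}_a}e_{n_a+i,n_b+j}x^{r-1}$ in $U(\gl_{M|N}[x])$, as in Proposition~\ref{ind2}) is a straightforward comparison, and the present corollary drops out as a formal consequence.
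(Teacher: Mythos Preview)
Your argument is correct and follows essentially the same route as the paper: surjectivity is already established, Proposition~\ref{ind1} supplies a spanning set of ordered supermonomials for $\widehat{Y}_\mu$, and Proposition~\ref{ind2} shows their images under $\Gamma$ are linearly independent in $Y_\mu$, forcing injectivity. Your added commentary on where the real work lies (Lemma~\ref{injeq} and the role of the super Serre relations) is accurate but extraneous to the proof of this corollary itself.
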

\begin{proof}
We have known that $\Gamma$ is a surjective homomorphism. Now a spanning set for $\widehat{Y}_{\mu}$ is obtained by Proposition~\ref{ind1}, while the image of this spanning set under $\Gamma$ is linearly independent in $Y_{\mu}$ by Proposition~\ref{ind2}. This shows that $\Gamma$ is injective.
\end{proof}

Let $Y_\mu^0$, $Y_\mu^+$ and $Y_\mu^-$ denote the subalgebras of $Y_{\mu}$ generated by all the $D$'s, $E$'s and $F$'s, respectively. The next result follows from the proof of Proposition~\ref{ind1} and the proof of Proposition~\ref{ind2}.
\begin{corollary} We have the PBW bases for the following superalgebras.
\begin{enumerate}
\item[(1)] The set of supermonomials in $\{ D_{a;i,j}^{(r)}\}_{1\leq a\leq n, 1\leq i,j\leq \mu_a, r\geq 1}$ taken in a certain fixed order forms a basis for $Y_\mu^0$.
\item[(2)] The set of supermonomials in $\{ E_{a,b;i,j}^{(r)}\}_{1\leq a<b\leq n, 1\leq i\leq\mu_a,1\leq j\leq\mu_b, r\geq 1}$ taken in a certain fixed order forms a basis for $Y_\mu^+$.
\item[(3)] The set of supermonomials in $\{ F_{b,a;i,j}^{(r)}\}_{1\leq a<b\leq n, 1\leq i\leq \mu_b,1\leq i\leq\mu_a, r\geq 1}$ taken in a certain fixed order forms a basis for $Y_\mu^-$.
\item[(4)] The set of supermonomials in the union of the elements listed in (1), (2) and (3) taken in a certain fixed order forms a basis for $Y_{\mu}$.
\end{enumerate}
\end{corollary}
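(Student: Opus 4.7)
The plan is to deduce all four statements directly from the isomorphism $\Gamma:\widehat{Y}_\mu \xrightarrow{\sim} Y_\mu$ just established, by extracting the ingredients already present in the proofs of Proposition~\ref{ind1} and Proposition~\ref{ind2} and applying them piece-by-piece. For part (2), the first paragraph of the proof of Proposition~\ref{ind1} shows that $\widehat{Y}_\mu^+$, and therefore $Y_\mu^+$, is spanned by supermonomials in $\{E_{a,b;i,j}^{(r)}\}$ taken in a fixed order, because Lemma~\ref{injeq} forces $\gr\widehat{Y}_\mu^+$ to be spanned by ordered supermonomials in the $\overline{E}_{a,b;i,j}^{(r)}$. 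For linear independence I would reuse the chain in the proof of Proposition~\ref{ind2}: restricting $\gr\widehat{Y}_\mu \xrightarrow{\gr\Gamma} \gr Y_\mu \cong U(\gl_{M|N}[x])$ to $\gr\widehat{Y}_\mu^+$ sends $\overline{E}_{a,b;i,j}^{(r)}$ to $(-1)^{\pa{i}_a}\,e_{n_a+i,\,n_b+j}\,x^{r-1}$ with $a<b$, and the classical PBW theorem for $U(\gl_{M|N}[x])$ forces ordered supermonomials in these strictly upper-block elements to be linearly independent, which lifts back to linear independence in $Y_\mu^+$. Part (3) then follows immediately by applying the isomorphism $\zeta_{M|N}$ of Proposition~\ref{zetadef}, which by (\ref{sze})--(\ref{szf}) exchanges $Y_\mu(\so)^+$ with $Y_{\overleftarrow{\mu}}(\sd)^-$ and matches ordered $E$-supermonomials with ordered $F$-supermonomials up to invertible scalars.

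For part (1), I would invoke Lemma~\ref{corcommute} together with the decomposition
\[
Y_\mu^0(\so)\;\cong\; Y_{\mu_1}(\so_1)\otimes Y_{\mu_2}(\so_2)\otimes\cdots\otimes Y_{\mu_n}(\so_n)
\]
recorded after Proposition~\ref{dd0}. Each tensor factor $Y_{\mu_a}(\so_a)$ admits a PBW basis in its RTT generators $\{t_{i,j}^{(r)}\}$ by Proposition~\ref{PBWSY}, and under the embedding $\psi_{\mu_1+\cdots+\mu_{a-1}}$ these generators are identified, via (\ref{Dtident}) and (\ref{psid}), with the parabolic generators $\{D_{a;i,j}^{(r)}\}_{1\le i,j\le\mu_a,\,r\ge 1}$. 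Tensoring the PBW bases of the factors and re-ordering then yields the claimed ordered basis of supermonomials in the $D$'s.

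Finally, part (4) follows from (1)--(3) combined with the triangular decomposition. The multiplication map
\[
Y_\mu^- \otimes Y_\mu^0 \otimes Y_\mu^+ \longrightarrow Y_\mu
\]
is surjective by the last paragraph of the proof of Proposition~\ref{ind1}, and any concatenation of ordered $F$-, $D$-, $E$-supermonomials from the three bases in (1)--(3) is sent under $\Gamma^{-1}$ to an ordered supermonomial of the type considered in Proposition~\ref{ind2}, hence its image is linearly independent in $Y_\mu$. So the concatenated supermonomials form a basis of $Y_\mu$. The only subtle point is to confirm that the spanning arguments of Proposition~\ref{ind1} remain valid for each of the subalgebras $Y_\mu^{0,\pm}$ in isolation rather than just inside $Y_\mu$; but this is guaranteed by the fact that in that proof each of $\widehat{Y}_\mu^0$, $\widehat{Y}_\mu^+$, $\widehat{Y}_\mu^-$ is treated separately before the triangular statement is assembled, so no new work is needed.
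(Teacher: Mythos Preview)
Your proposal is correct and follows essentially the same route as the paper, which records the corollary simply as ``follows from the proof of Proposition~\ref{ind1} and the proof of Proposition~\ref{ind2}.'' You have merely spelled out how those two proofs give spanning and linear independence for each of $Y_\mu^\pm$, $Y_\mu^0$, and $Y_\mu$ separately. The one small deviation is in part~(1): the paper would extract spanning from the supercommutativity of $\gr\widehat{Y}_\mu^0$ (third paragraph of the proof of Proposition~\ref{ind1}) and linear independence from Proposition~\ref{ind2}, whereas you instead invoke the tensor decomposition $Y_\mu^0\cong\bigotimes_a Y_{\mu_a}(\so_a)$ together with Proposition~\ref{PBWSY} and the identification (\ref{Dtident}), (\ref{psid}). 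Both arguments are valid and short; yours has the mild advantage of not passing through the graded algebra at all for this piece. One cosmetic point: in your reduction of (3) to (2) via $\zeta_{M|N}$, note that $\zeta_{M|N}$ sends $Y_\mu^+(\so)$ to $Y_{\overleftarrow{\mu}}^-(\sd)$, so to obtain (3) for $Y_\mu^-(\so)$ you should apply (2) to the pair $(\overleftarrow{\mu},\sd)$ and transport back, or equivalently observe that (2) has already been established for \emph{all} $(\mu,\so)$ simultaneously; the logic is fine, only the direction of the map needs to be stated carefully.
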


\subsection*{Acknowledgements}
The author is grateful to Weiqiang Wang and Shun-Jen Cheng for numerous discussions. This work is partially supported by MOST grant 103-2115-M-008-012-MY2 and NCTS Young Theorist Award 2015.

\end{document}